\newcommand{\<}{\langle}
\renewcommand{\>}{\rangle}
\newtheoremstyle{theorem}
{10pt}
{10pt}
{\sl}
{}
{\bf}
{. }
{ }
{}
\newcommand{\ltn}{\ensuremath{\left| \! \left| \! \left|}}
\newcommand{\rtn}{\ensuremath{\right| \! \right| \! \right|}}
\theoremstyle{theorem}
\newtheorem{thm}{Theorem}[section]
\newtheorem{cor}{Corollary}[section]
\newtheorem{de}{Definition}[section]
\newtheorem{lem}{Lemma}[section]
\newtheorem{proposition}{Proposition}[section]
\newtheorem{rem}{Remark}[section]
\numberwithin{equation}{section}
\newtheoremstyle{defi}
{10pt}
{10pt}
{\rm}
{}
{\bf}
{. }
{ }
{}
\theoremstyle{defi}
\begin{document} 


\title{{\large  \textbf{Synchronization of stochastic dissipative differential equation  driven by fractional Brownian motions\thanks{This work
				is supported in part by a NSFC Grant Nos. 12571164 and 12171084, the Fundamental Research Funds for the Central Universities No. RF1028623037, the Postdoctoral Fellowship
Program of CPSF under Grant Number GZC20230438 and  the
China Postdoctoral Science Foundation under Grant Number  2024M760422.}   }}
\author{Qiyong Cao$^{1}$
\footnote{Email:xjlyysx@163.com.} \  \  Hongjun Gao$^{2}$ \footnote{Correspondence, Email: hjgao@seu.edu.cn}\ \ Wei Wei$^{3}$
\\
\footnotesize{  1. School of Mathematics,
		Southwest Jiaotong University, Chengdu 610097, P. R. China}\\ 
\footnotesize{ 2. School of Mathematics,
	Southeast University, Nanjing 211189, P. R. China}\\ 
\footnotesize{3. School of Mathematical Sciences, Nanjing Normal University, Nanjing 210023, P. R. China}
}
 }  

 \date{}
\maketitle 
\footnotesize
\noindent \textbf{Abstract~~} In this paper, we study a class of dissipative stochastic differential equations driven by nonlinear multiplicative fractional Brownian noise with Hurst index $H \in \left(\frac{1}{3},\frac{1}{2})\cup(\frac{1}{2}, 1\right) $. We establish the well-posedness of the associated coupled stochastic differential equations and prove synchronization in the sense of trajectories. Our approach relies on the Doss-Sussmann transformation, which enables us to extend existing results for additive and linear noise to the case of nonlinear multiplicative fractional Brownian noise. The findings provide new insights into the synchronization of dissipative systems under fractional noise perturbations.
\\[2mm]
\textbf{Key words~~~} Stochastic differential equations; synchronization; Doss-Sussmann transformation; fractional Brownian motion.
\\
\textbf{2020 Mathematics Subject Classification~~~} 37L55; 60G22; 34D06

\section{Introduction}
Fractional Brownian motion (fBm), indexed by its Hurst parameter  $H \in \left(\frac{1}{3}, 1\right)$, has become a cornerstone in the modeling of stochastic systems with memory and long-range dependence. Unlike classical Brownian motion, fBm exhibits self-similarity and non-Markovian properties, making it particularly relevant for applications in fields such as climate dynamics, financial modeling, and complex biological processes et al. Extensive research has been devoted to stochastic differential equations (SDEs) driven by fBm or geometric fractional Brownian rough path, with significant contributions focusing on existence and uniqueness of solution (e.g., \cite{MR2387368}, \cite{MR3871629,MR4493559,MR4385780,MR4174393,MR2604669},\cite{MR2378138}),  the structure of invariant measures and random attractors (e.g., \cite{MR4385780},\cite{MR3226746,MR3047956,MR2123208,MR2814425,MR3112925},\cite{MR2946314},\cite{MR4608383}). 

While much is known about general dynamical properties of fBm-driven systems, synchronization in systems driven by fractional noises has received significantly less attention compared to systems driven by standard Wiener or non-Gaussian processes. Synchronization, where the trajectories of previously uncoupled systems asymptotically approach as $\kappa\rightarrow\infty$ after coupling($\kappa$ is the strength of coupling), is a fundamental phenomenon with wide-ranging applications.  On the one hand, in the case of Wiener and non-Gaussian processes-driven systems,  synchronization under additive and multiplicative noise has been analyzed using Lyapunov functionals and coupling methods(e.g., \cite{hu2008stochastic},\cite{jafri2016generalized},\cite{MR4760258}). On the other hand, synchronization can be obtained via coupling in level of random dynamics, such as, random stationary solution, random attractors, random invariant manifold(e.g.,\cite{MR3869887,MR4500127,MR4110684},\cite{MR2154449},\cite{MR2399931},\cite{MR4181483}, \cite{MR4026947,MR4283320,MR4549839,MR2678918},\cite{MR2671384}). However, it requires the noise to be additive or linear  multiplicative. For fractional noise, it exhibits long-range dependence and non-Markovian properties etc., making the previous introduced methods less applicable. Nevertheless, some synchronized results for stochastic system are obtained as considering the simple	fractional  noise cases, such as, additive noise (e.g.,\cite{MR3377401}) and linear multiplicative noise (e.g.,\cite{wei2024synchronization,MR4778574})). Compared to the Wiener and non-Gaussian noise cases, it is necessary to develop some techniques to deal with the synchronization of stochastic systems driven by fractional noises, especially for the case of nonlinear multiplicative fractional noises.

Throughout this paper, we study the synchronization of  a class of stochastic dissipative system driven by $\mathbb{R}^d$-valued fBm $B^H_t$ with Hurst index $H\in\left(\frac{1}{2},1\right)$ or geometric fractional Brownian rough path $\mathbf{B}^{H}=(B^H,\mathbb{B}^H)$ with Hurst index $H\in(\frac{1}{3},\frac{1}{2})$.  Specifically, we consider the following stochastic differential equations
\begin{equation}\label{Sec1:eq 1.1}
	\left\{\begin{array}{l}
		dY_t^1=f(Y_t^1)dt+\sigma(Y_t^1)dB_t^H,\\
		dY_t^2=g(Y_t^2)dt+\sigma(Y_t^2)dB_t^H,
	\end{array}\right.
\end{equation}
and 
\begin{equation}\label{Sec1:eq 1.1r}
	\left\{\begin{array}{l}
		dY_t^1=f(Y_t^1)dt+\sigma(Y_t^1)d \mathbf B_t^H,\\
		dY_t^2=g(Y_t^2)dt+\sigma(Y_t^2)d\mathbf B_t^H,
	\end{array}\right.
\end{equation}
where $t\in[a,b]\subset\mathbb{R}$ and these system with  initial data $(Y_a^1,Y_a^2)$. Additionally, $\mathbb{R}^m$-valued function $f,g$ are locally Lipschitz continuous and dissipative (Sec. 3, Assumption \textbf{A1}).  $\sigma\in\mathcal{L}(\mathbb{R}^d,\mathbb{R}^m)$ is a matrix-valued function satisfying certain boundedness conditions (Sec. 3, Assumption \textbf{A2}).  

Our key contributions are as follows:

\textbf{\textit{A framework to synchronization}:} In rough noise settings (Hurst index $H\in(\frac{1}{3},\frac{1}{2})$), Duc used the Doss-Sussmann transformation to get the well-posedness  and random attractors of equation \eqref{Sec1:eq 1.1r} in Ref. \cite{MR4385780}.  To achieve synchronization of equations \eqref{Sec1:eq 1.1} and \eqref{Sec1:eq 1.1r}, it is essential to introduce a coupling form. Motivated by the additive and linear multiplicative noise cases,  we will determine a specific coupling form through the Doss-Sussmann transformation. To this end, we first consider the following pure Young differential equation
\begin{equation}\label{Sec1:pure Young}
	\begin{aligned}
		dy_t=\sigma(y_t)dB_t^H,\quad t\in[a,b]
	\end{aligned}
\end{equation} 
and  pure rough differential equation
\begin{equation}\label{Sec1:pure rough}
	\begin{aligned}
		dy_t=\sigma(y_t)d\mathbf B_t^H,\quad t\in[a,b]
	\end{aligned}
\end{equation} 
with initial data $y_a$.  Denote the solution operators of these equations    by $\varphi(t,B^H,y_a)$ and $\varphi(t,\mathbf B^H,y_a)$, respectively. Under some assumptions,  $\varphi(t,B^H,y_a)$ and $\varphi(t,\mathbf B^H,y_a)$ are Fr\'{e}chet differentiable with respect to initial data $y_a$, and we denote  the Fr\'{e}chet derivative of  $\varphi(t,B^H,\cdot)$ and $\varphi(t,\mathbf B^H,\cdot)$ by $\frac{\partial \varphi(t,B^H,\cdot)}{\partial y}$ and $\frac{\partial \varphi(t,\mathbf B^H,\cdot)}{\partial y}$, respectively. Similarly, we consider the backward pure Young differential equation
\begin{equation}\label{Sec1:bpure Young}
	\begin{aligned}
		dh_t=\sigma(h_t)dB_t^H,\quad t\in[a,b]
	\end{aligned}
\end{equation} 
and the backward pure rough differential equation
\begin{equation}\label{Sec1:bpure Youngr}
	\begin{aligned}
		dh_t=\sigma(h_t)d\mathbf B_t^H,\quad t\in[a,b]
	\end{aligned}
\end{equation} 
with terminal data $h_b$. Its solution operators are denoted  by $\psi(t,B^H,h_b)$ and $\psi(t,\mathbf B^H,h_b)$, and its Fr\'{e}chet derivatives with respect to terminal data $h_b$ are denoted by $\frac{\partial \psi(t,B^H,\cdot)}{\partial h}$ and $\frac{\partial \psi(t,\mathbf B^H,\cdot)}{\partial h}$, respectively. To realize synchronization of  equations \eqref{Sec1:eq 1.1} and \eqref{Sec1:eq 1.1r}, we construct the following random ordinary differential equations
 \begin{equation}\label{Sec1:zeq}
  \left\{
 \begin{aligned}
 	\dot{Z}^1_t&= \frac{\partial \psi}{\partial h}(a,B^H,\varphi(t,B^H,Z^1_t)) f(\varphi(t,B^H,Z^1_t))+\kappa(Z^2_t-Z^1_t),\\
 \dot{Z}^2_t&= \frac{\partial \psi}{\partial h}(a,B^H,\varphi(t,B^H,Z^2_t)) g(\varphi(t,B^H,Z^2_t))+\kappa(Z^1_t-Z^2_t),
 \end{aligned}
 \right.
\end{equation}
 \begin{equation}\label{Sec1:zeqr}
	\left\{
	\begin{aligned}
		\dot{Z}^1_t&= \frac{\partial \psi}{\partial h}(a,\mathbf B^H,\varphi(t,\mathbf B^H,Z^1_t)) f(\varphi(t,\mathbf B^H,Z^1_t))+\kappa(Z^2_t-Z^1_t),\\
		\dot{Z}^2_t&= \frac{\partial \psi}{\partial h}(a,\mathbf B^H,\varphi(t,\mathbf B^H,Z^2_t)) g(\varphi(t,\mathbf B^H,Z^2_t))+\kappa(Z^1_t-Z^2_t),
	\end{aligned}
	\right.
\end{equation}
with initial data $(Y_a^1,Y^2_a)$ and $t\in[a,b]$, where $\kappa$ is the strength of coupling. Using the Doss-Sussmann transformation $Y_t^1=\varphi(t,B^H,Z_t^1), Y_t^2=\varphi(t,B^H,Z_t^2)$ and $Y_t^1=\varphi(t,\mathbf B^H,Z_t^1), Y_t^2=\varphi(t,\mathbf B^H,Z_t^2)$, we derive the corresponding coupled stochastic differential equations
\begin{equation}\label{Sec1:equi-eq}
	\left\{
	\begin{aligned}
		dY^1_t&= \left(f(Y^1_t)+\kappa\frac{\partial \varphi(t,B^H,Z^1_t)}{\partial y}(Z^2_t-Z^1_t)\right)dt+\sigma(Y^1_t)dB_t^H,\\
		dY^2_t&= \left(g(Y^2_t)+\kappa\frac{\partial \varphi(t,B^H,Z^2_t)}{\partial y}(Z^1_t-Z^2_t)\right)dt+\sigma(Y^2_t)dB_t^H,\\
	\end{aligned}
	\right.
\end{equation} 
and 
\begin{equation}\label{Sec1:equi-eqr}
	\left\{
	\begin{aligned}
		dY^1_t&= \left(f(Y^1_t)+\kappa\frac{\partial \varphi(t,\mathbf B^H,Z^1_t)}{\partial y}(Z^2_t-Z^1_t)\right)dt+\sigma(Y^1_t)d\mathbf B_t^H,\\
		dY^2_t&= \left(g(Y^2_t)+\kappa\frac{\partial \varphi(t,\mathbf B^H,Z^2_t)}{\partial y}(Z^1_t-Z^2_t)\right)dt+\sigma(Y^2_t)d\mathbf B_t^H,\\
	\end{aligned}
	\right.
\end{equation} 
with initial data $(Y_a^1,Y_a^2)$. From the equations \eqref{Sec1:equi-eq} and \eqref{Sec1:equi-eqr}, we can derive  synchronized equations
\begin{equation}\label{Sec1:1.11}
	\begin{aligned}
		d\bar{Y}_t=\frac{1}{2}(f(\bar{Y}_t))+g(\bar{Y}_t))dt+\sigma(\bar{Y}_t)dB_t^H
	\end{aligned}
\end{equation} 
and 
\begin{equation}\label{Sec1:1.12}
	\begin{aligned}
		d\bar{Y}_t=\frac{1}{2}(f(\bar{Y}_t))+g(\bar{Y}_t))dt+\sigma(\bar{Y}_t)d\mathbf B_t^H
	\end{aligned}
\end{equation} 
with initial data $\frac{1}{2}(Y_a^1+Y_a^2)$.

\textbf{\textit{Well-posedness  for equations \eqref{Sec1:equi-eq} and  \eqref{Sec1:equi-eqr} }:} In the classical framework of Young differential equations, it is sufficient to require that the diffusion term $\sigma$ belongs to $C_b^2$, the space of twice continuously differentiable functions with bounded derivatives, to solve Young differential equations. However, for equations \eqref{Sec1:equi-eq} and  \eqref{Sec1:equi-eqr} (as in Ref. \cite{MR4385780}), we need to require that  $\sigma\in C_b^3$,  the space of three times continuously differentiable functions with bounded derivatives.  Indeed, as we consider the Fr\'{e}chet differentiability of $\varphi(t,B^H,\cdot)$ and $\psi(t,B^H,\cdot)$ in Young cases,  stronger regularity of $\sigma$ is necessary. In addition, $C_b^3$-regularity assumption of $\sigma$ is a classical  in rough cases.   Under these assumptions, we can  construct the Doss-Sussmann transformation for the Young cases and rough cases to obtain the global solution of \eqref{Sec1:equi-eq} and \eqref{Sec1:equi-eqr}.

Our models assume that   noises in each component has identical form, it can be interpreted as a deterministic system perturbed by a common random environment.  This simplification allows for a more natural application of the Doss-Sussmann transformation. In contrast, for more general forms of noise (such as,  the noises have different Hurst index in each subsystem),  it may be necessary to treat the system as a whole in vector form, and the consideration of convergence between components becomes more complex. So we intend to do with the difficult cases in future work.    Our method is neither Lyapunov functional method  nor random  dynamical method, it is based on the Doss-Sussmann transformation, the solutions of \eqref{Sec1:equi-eq} and \eqref{Sec1:equi-eqr} may not generate a random dynamical system, then we directly use the trajectories of the solution to construct the synchronization. So our method is direct and natural.  

The remainder of this paper is organized as follows. Section 2 introduces mathematical preliminaries, including some necessary function spaces and  integrals. In Section 3,  we consider the  global well-posedness for a class of coupled equations.  In Section 4, the result on synchronization of stochastic system is given. In Section 5,  we provide an illustrative example to demonstrate the theoretical results.

\setcounter{equation}{0}
\section{Preliminaries}\label{sec:setup}

\subsection{$p$--variation space and H\"{o}lder space}
 Throughout this paper,  we work on the Euclidean space,  equipped with the Euclidean norm $\|\cdot\|$. Additionally, since we will consider multiple component systems, we also work on the space $\mathbb{R}^m\times\mathbb{R}^m$ with norm $\|\cdot\|=(\|\cdot\|^2+\|\cdot\|^2)^{\frac{1}{2}}.$ Let $I=[a,b]\subset\mathbb{R}$ be a compact interval. We denote  the space of all continuous paths   $x:I\mapsto \mathbb{R}^d$ by $C(I;\mathbb{R}^d)$, equipped with the sup norm $\|\cdot\|_{I,\infty}$ given by $\|x\|_{I,\infty}=\sup_{t\in I}\|x_t\|$.  Denote $x_{s,t}=x_t-x_s$. For $p\geq 1$, we denote by $C^{p-var}(I;\mathbb{R}^d)\subset C(I;\mathbb{R}^d)$ the space of continuous paths $x: I\mapsto \mathbb{R}^d$ with  finite $p$--variation semi-norm $\ltn x\rtn_{p-var}:=(\sup_{\mathcal{P}(I)}\sum_{[u,v]\in\mathcal{P}(I)}\|x_{u,v}\|^{p})^{\frac{1}{p}}<\infty$, where the supremum is taken over all finite partitions $\mathcal{P}(I)$  of interval $I$.  Thus,  $C^{p-var}(I;\mathbb{R}^d)$ can be equipped with $p$--variation  norm $\|x\|_{p-var,I}=\|x_a\|+\ltn x\rtn_{p-var,I}$ for any $x\in C^{p-var}(I;\mathbb{R}^d)$. Note that $\ltn x\rtn_{p-var,I}^{p}$ is a control. i.e., $\ltn x\rtn_{p-var,[s,t]}^{p}$ is continuous with respect to $s<t\in I$ and satisfies
\begin{equation}
\ltn x\rtn_{p-var,[s,s]}^{p}=0,\quad \ltn x\rtn_{p-var,[s,u]}^{p}+\ltn x\rtn_{p-var,[u,t]}^{p}\leq \ltn x\rtn_{p-var,[s,t]}^{p},\quad \forall s\leq u\leq t.
\end{equation}
\begin{lem}[Lemma 2.1 \cite{MR3871629}]\label{Sec2: equvalence Control}
	Let $p\geq 1$ and $ x\in C^{p-var}(I;\mathbb{R}^d)$. For any partition of $I$  given by  $a=\tau_0<\tau_1<\cdots<\tau_{N}=b$, we have
	$$\sum_{i=0}^{N-1}{\ltn x \rtn^{p}_{p-var,[\tau_i,\tau_{i+1}]}}\leq \ltn x\rtn_{p-var,I}^p\leq N^{p-1}\sum_{i=0}^{N-1}{\ltn x \rtn^{p}_{p-var,[\tau_i,\tau_{i+1}]}}.$$
\end{lem}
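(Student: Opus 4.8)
The plan is to prove the two inequalities separately. The left-hand (lower) bound is an immediate consequence of the superadditivity of the control $\omega(s,t):=\ltn x\rtn^{p}_{p-var,[s,t]}$ recorded just above the statement, whereas the right-hand (upper) bound requires combining a subadditivity property of the seminorm \emph{itself} with an elementary power-mean (convexity) inequality.

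For the lower bound I would simply iterate the superadditivity relation $\ltn x\rtn^{p}_{p-var,[s,u]}+\ltn x\rtn^{p}_{p-var,[u,t]}\leq\ltn x\rtn^{p}_{p-var,[s,t]}$ along the grid $a=\tau_0<\cdots<\tau_N=b$. A finite induction on $N$ then yields
\[
\sum_{i=0}^{N-1}\ltn x\rtn^{p}_{p-var,[\tau_i,\tau_{i+1}]}\leq\ltn x\rtn^{p}_{p-var,[\tau_0,\tau_N]}=\ltn x\rtn^{p}_{p-var,I},
\]
which is exactly the left inequality.

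For the upper bound the crucial step is to establish subadditivity of the seminorm (not of its $p$-th power), namely $\ltn x\rtn_{p-var,[s,t]}\leq\ltn x\rtn_{p-var,[s,u]}+\ltn x\rtn_{p-var,[u,t]}$ for $s\leq u\leq t$. Given any partition $\pi=\{s=t_0<\cdots<t_M=t\}$, I would insert the point $u$ and split the unique subinterval $[t_k,t_{k+1}]$ straddling $u$ via $\|x_{t_k,t_{k+1}}\|\leq\|x_{t_k,u}\|+\|x_{u,t_{k+1}}\|$. Encoding the increments of the two resulting subpartitions as vectors supported on $[s,u]$ and $[u,t]$, respectively, the triangle inequality becomes a componentwise domination, so that Minkowski's inequality in $\ell^{p}$ controls $(\sum_{\pi}\|x\|^{p})^{1/p}$ by $\ltn x\rtn_{p-var,[s,u]}+\ltn x\rtn_{p-var,[u,t]}$; taking the supremum over $\pi$ gives subadditivity, and iterating over the grid yields $\ltn x\rtn_{p-var,I}\leq\sum_{i=0}^{N-1}\ltn x\rtn_{p-var,[\tau_i,\tau_{i+1}]}$.

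Finally I would raise this to the $p$-th power and apply the power-mean inequality $(\sum_{i=0}^{N-1}a_i)^{p}\leq N^{p-1}\sum_{i=0}^{N-1}a_i^{p}$ (valid for $p\geq 1$ by convexity of $t\mapsto t^{p}$, equivalently by H\"older with conjugate exponents $p$ and $p/(p-1)$) with $a_i=\ltn x\rtn_{p-var,[\tau_i,\tau_{i+1}]}$, producing $\ltn x\rtn^{p}_{p-var,I}\leq N^{p-1}\sum_{i=0}^{N-1}\ltn x\rtn^{p}_{p-var,[\tau_i,\tau_{i+1}]}$. I expect the main obstacle to be precisely the subadditivity-of-the-seminorm step: inserting the split point $u$ can change individual increments in either direction, so a direct comparison of the two partition sums is awkward, and the clean route is the componentwise/Minkowski argument described above rather than manipulating the sums term by term.
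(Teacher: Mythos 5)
Your proof is correct, but note that the paper itself offers no argument for this statement: it is quoted as Lemma 2.1 of \cite{MR3871629}, so the comparison can only be against the standard proof given there. That proof treats the upper bound in one shot: for an arbitrary partition $\pi$ of $I$, insert the grid points $\tau_i$, split each increment $x_{t_j,t_{j+1}}$ by the triangle inequality into at most $N$ pieces (one per grid interval it meets), apply the discrete convexity estimate $\left(\sum_{k=1}^{n}a_k\right)^p\leq n^{p-1}\sum_{k=1}^{n}a_k^p$ with $n\leq N$ to each increment separately, and regroup the pieces according to which $[\tau_i,\tau_{i+1}]$ contains them, so that their $p$-th powers are dominated by $\ltn x\rtn^p_{p-var,[\tau_i,\tau_{i+1}]}$; taking the supremum over $\pi$ finishes. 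Your route is more modular: you first isolate the concatenation subadditivity of the seminorm itself, $\ltn x\rtn_{p-var,[s,t]}\leq \ltn x\rtn_{p-var,[s,u]}+\ltn x\rtn_{p-var,[u,t]}$, via the zero-padding/Minkowski device --- which is sound, since the componentwise domination of the increment vector by the sum of the two padded vectors, Minkowski in $\ell^p$, and the supremum over partitions deliver exactly that inequality --- and only then iterate along the grid and apply the power-mean inequality once with exactly $N$ terms. What your version buys is a clean standalone fact of independent interest (subadditivity of the $p$-variation seminorm under concatenation) at the price of invoking Minkowski; the proof in \cite{MR3871629} avoids Minkowski but intertwines the splitting and convexity steps. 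Both yield the same constant $N^{p-1}$, and your lower-bound argument, iterating the superadditivity of the control $\ltn x\rtn^p_{p-var}$ recorded just before the lemma, coincides with the standard one.
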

For further properties of the $p$--variation norm, we refer to Ref. \cite{MR2604669}.

In addition, let $\alpha\in(0,1)$. We denote by $C^{\alpha}(I;\mathbb{R}^d)$ the space of  $\alpha$-H\"{o}lder continuous paths on $I$ with the norm
$$\|x\|_{\alpha,I}:=\|x_a\|+\ltn x\rtn_{a,I},\quad \text{where}\quad \ltn x\rtn_{\alpha,I}:=\sup_{s,t\in I,s\neq t}\frac{\|x_{s,t}\|}{|t-s|^{\alpha}}<\infty.  $$
Throughout this paper, we need  to use the concept of a greedy  sequence of stopping times, as introduced in Ref. \cite{MR3112937}. For any given $p\in(1,2)$ and $\gamma\in\mathbb{R}^{+}$, the  sequence of greedy times $\{\tau_{i}(\gamma,I,p)\}_{i\in\mathbb{N}}$ with respect to $p$--variation semi-norm is defined by
\begin{equation}\label{Sec2:pstopping times}
	\tau_0:=a,\quad \tau_{i+1}:=\inf\{t>\tau_i:\ltn x\rtn_{p-var,[\tau_i,t]}=\gamma \}\wedge b.
\end{equation}
Denote by $N_{\gamma,I,p}(x):=\sup\{i\in\mathbb{N}:\tau_i\leq b\}$ the number of stopping times.  It follows  from the first inequlity of Lemma \ref{Sec2: equvalence Control} that
\begin{equation}\label{Sec2:pstopping times numbers}
	N_{\gamma,I,p}(x)\leq 1+\gamma^{-p}\ltn x\rtn_{p-var,I}^p.
\end{equation}

Additionally, let $\gamma\in(0,1)$ and $\alpha\in\left(\frac{1}{2},\alpha^\prime\right),\alpha^\prime<1$.  We can also construct another greedy sequence of stopping times $\{\tau_{i}(\gamma,I, \alpha)\}_{i\in\mathbb{N}}$ with respect to $\alpha$-H\"{o}lder semi-norm given by
\begin{equation}\label{Holder stop}
	\tau_0:=a,\quad \tau_{i+1}:=\inf\{t>\tau_i:(t-\tau_i)^{\alpha}+\ltn x\rtn_{\alpha,[\tau_i,t]}=\gamma \}\wedge b.
\end{equation}
Denote by $N_{\gamma,I,\alpha}(x):=\sup\{i\in\mathbb{N}:\tau_i\leq b\}$ the number of stopping times above. Similarly, we have
\begin{equation}
	N_{\gamma,I,\alpha}\leq 1+\gamma^{-\frac{1}{\alpha^\prime-\alpha}}(b-a)(1+\ltn x \rtn_{\alpha^\prime,I})^{\frac{1}{\alpha^\prime-\alpha}}.
\end{equation}
\begin{rem}
Let $\gamma\in(0,1)$.  For each stopping times interval determined by \eqref{Holder stop}, we have 
 $\ltn x\rtn_{\alpha,[\tau_i,\tau_{i+1}]}<\gamma$ and $(\tau_{i+1}-\tau_i)^\alpha<\gamma$, $i=0,\cdots,N_{\gamma,\alpha,I}-1$.  This property will be used in subsequent sections.
\end{rem}
 \begin{de}
	Let $\alpha \in \left(\frac{1}{3}, \frac{1}{2}\right]$. Denote by $\mathscr{C}^{\alpha}(I; \mathbb{R}^d)$ the space of $\alpha$-Hölder continuous rough paths over $\mathbb{R}^d$. Each element is a pair $\mathbf{x} := (x, \mathbbm{x})$, where $x : I \to \mathbb{R}^d$ and $\mathbbm{x} : \Delta_I \to \mathbb{R}^d \otimes \mathbb{R}^d$, satisfying
	\begin{equation}
		\ltn x\rtn_{\alpha, I} := \sup_{s \neq t \in I} \frac{\|x_{s,t}\|}{|t - s|^{\alpha}} < \infty, \quad
		\ltn \mathbbm{x}\rtn_{2\alpha, \Delta_I} := \sup_{(s,t) \in \Delta_I} \frac{\|\mathbbm{x}_{s,t}\|}{|t - s|^{2\alpha}} < \infty,
	\end{equation}
	where $\Delta_I := \{(s,t) \in I^2 : s < t\}$. Moreover, the pair satisfies the {\upshape Chen} relation:
	\[
	\mathbbm{x}_{s,t} = \mathbbm{x}_{s,u} + \mathbbm{x}_{u,t} + x_{s,u} \otimes x_{u,t}, \quad \forall (s,u,t) \in I^3,~ s \leq u \leq t.
	\] 
	A semi-norm on $\mathscr{C}^{\alpha}(I;\mathbb{R}^d)$ is given by
	\[
	\|\mathbf{x}\|_{\alpha,I}=\ltn x\rtn_{\alpha,I}+\ltn \mathbbm{x}\rtn_{2\alpha,\Delta_I}^{\frac{1}{2}}.
	\]
	Furthermore, let $\mathscr{C}_g^{\alpha}(I;\mathbb{R}^d)$ denote the closure of smooth paths in $\mathscr{C}^{\alpha}(I; \mathbb{R}^d)$ under this topology. $\mathbbm{x}$ is often referred to as the Lévy area or the second order process.
\end{de}

\begin{rem}
	For such rough paths, one may also define a variation-type semi-norm. Let $\alpha\in(\frac{1}{3},\frac{1}{2}]$ and $p = \frac{1}{\alpha}, q = \frac{p}{2}$. Then the $p$--variation semi-norm is defined by
	\[
	\ltn \mathbf{x} \rtn_{p\text{-var}, I} = \left( \ltn x \rtn^p_{p\text{-var}, I} + \ltn \mathbbm{x} \rtn^q_{q\text{-var}, \Delta_I} \right)^{\frac{1}{p}},
	\]
	where
	\[
	\ltn \mathbbm{x} \rtn_{q\text{-var}, \Delta_I} := \left( \sup_{\mathcal{P}(I)} \sum_{[u,v] \in \mathcal{P}(I)} \|\mathbbm{x}_{u,v}\|^q \right)^{\frac{1}{q}}.
	\]
	Therefore,  given  $\gamma\in\mathbb{R}^{+}$, the  sequence of greedy times $\{\tau_{i}(\gamma,I,p)\}_{i\in\mathbb{N}}$ with respect to $p$--variation semi-norm is defined by
	\begin{equation}\label{Sec2:pstopping times1}
		\tau_0:=a,\quad \tau_{i+1}:=\inf\{t>\tau_i:\ltn \mathbf{x}\rtn_{p-var,[\tau_i,t]}=\gamma \}\wedge b.
	\end{equation}
	Denote by $N_{\gamma,I,p}(\mathbf{x}):=\sup\{i\in\mathbb{N}:\tau_i\leq b\}$ the number of stopping times, it also satisfies \begin{equation}\label{Sec2:pstopping times numbers1}
		N_{\gamma,I,p}(\mathbf{x})\leq 1+\gamma^{-p}\ltn \mathbf{x}\rtn_{p-var,I}^p.
	\end{equation}
\end{rem}

\subsection{Young integral}
\begin{de}\label{Sec2:Young integral}
Let $y\in C^{p-var}(I;\mathbb{R}^{m\times d})$  and $x\in C^{q-var}(I;\mathbb{R}^{d})$ with $\frac{1}{p}+\frac{1}{q}>1$.  Young integral $\int_{a}^{b}y_rdx_r$ is defined as
$$\int_{a}^{b}y_rdx_r:=\lim_{|\mathcal{P}(I)|\rightarrow 0}\sum_{[u,v]\in\mathcal{P}(I)}y_ux_{u,v},$$
where the limit is taken over all  the finite partitions $\mathcal{P}(I)$ of $I$ and $|\mathcal{P}(I)|=\max_{[u,v]\in \mathcal{P}(I)}|u-v|$(see Ref. \cite{MR1555421}).
 \end{de}
 Young integral satisfies the additive property and the Young-Loeve estimate(see Ref. \cite[Theorem 6.8]{MR2604669})
 \begin{equation}\label{Sec2:p-Youngbounded}
 \left\|\int_{s}^{t}y_rdx_r-y_sx_{s,t}\right\|\leq C\ltn x \rtn_{p-var,[s,t]}\ltn y \rtn_{q-var,[s,t]},\quad [s,t]\subset I,
\end{equation}
where constant $C>1$.
 \begin{rem}\label{Sec2:Young est-H}
 Young integral  can also be understood in the sense of   H\"{o}lder semi-norm. Specifically, let $\alpha=\frac{1}{p}, \beta=\frac{1}{q}$  and $y\in C^{\alpha}(I;\mathbb{R}^{m\times d})$, $x\in C^{\beta}(I;\mathbb{R}^d)$ with $\alpha+\beta>1$.  Then Young integral $\int_{a}^{b}y_rdx_r$ can be defined   as in Definition \ref{Sec2:Young integral}. Moreover, there exists a constant $C>1$ such that
  $$\left\|\int_{s}^{t}y_rdx_r-y_sx_{s,t}\right\|\leq C\ltn y\rtn_{\alpha,[s,t]}\ltn x\rtn_{\beta,[s,t]}(t-s)^{\alpha+\beta},\quad [s,t]\subset I.$$
 \end{rem}
 \subsection{Rough integral}

 \begin{de}
 	Let $\alpha \in \left( \frac{1}{3}, \frac{1}{2} \right]$. A path $y \in C^\alpha(I;\mathbb{R}^m)$ is said to be controlled by $x \in C^\alpha(I; \mathbb{R}^d)$ if there exists a pair of functions $(y', R^y)$ such that
 	\[
 	y' \in C^\alpha\left(I;\mathbb{R}^{m\times d}\right), \quad R^y \in C^{2\alpha}(\Delta_I; \mathbb{R}^m),
 	\]
 	and 
 	\begin{equation} \label{controlRP}
 		y_{s,t} = y^\prime_s\, x_{s,t} + R^y_{s,t}
 	\end{equation}
 hold	for all $a \leq s \leq t \leq b$.
 	The function $y'$ is called the Gubinelli derivative   of $y$, which is uniquely determined when $x$ is truly rough {\upshape (see \cite{MR4174393})}. 
 	
 	We denote by $\mathcal{D}^{2\alpha}_x(I; \mathbb{R}^m)$ the collection of all such controlled rough path pairs $(y, y')$, which forms a Banach space under the norm
 	\begin{align*}
 		\|(y, y^\prime)\|_{x, 2\alpha, I} &:= \|y_{a}\| + \|y^\prime_{a}\| + \ltn  (y, y^\prime) \rtn_{x, 2\alpha, I}, \\
 		\ltn  (y, y^\prime) \rtn_{x, 2\alpha, I} &:= \ltn y^\prime\rtn_{\alpha, I} + \ltn R^y\rtn_{2\alpha, \Delta_I},
 	\end{align*}
 	where 
 	\[
 	\ltn y^\prime\rtn_{\alpha, I} := \sup_{s \neq t \in I} \frac{\|y^\prime_t - y^\prime_s\|}{|t - s|^\alpha}, \quad \ltn R^y\rtn_{2\alpha, \Delta_I} := \sup_{(s, t) \in \Delta_I} \frac{\|R^y_{s,t}\|}{|t - s|^{2\alpha}}.
 	\]
 \end{de}
 
 \begin{thm}[{\cite{MR4174393}}]\label{rough est}
 	Let $\alpha \in \left(\frac{1}{3}, \frac{1}{2}\right]$. For a fixed rough path $\mathbf{x} = (x,\mathbbm{x}) \in \mathscr{C}^\alpha(I; \mathbb{R}^d)$ and any controlled path $(y, y^\prime) \in \mathcal{D}^{2\alpha}_x(I;\mathbb{R}^m)$, the rough integral $\int_s^t y_u \, \mathrm{d}\mathbf{x}_u$ is defined as the limit
 	\[
 	\int_s^t y_u \, d\mathbf{x}_u := \lim_{|\mathcal{P}(s,t)| \to 0} \sum_{[u,v] \in \mathcal{P}(s,t)} \left( y_u\, x_{u,v} + y^\prime_u\, \mathbbm{x}_{u,v} \right),\quad  [s,t]\subset I. 
 	\]
 	 Moreover, there exists a constant $C_\alpha > 1$ such that the following estimate holds:
 	\begin{align}\label{roughEst}
 		\left\| \int_s^t y_u \, d\mathbf{x}_u - y_s x_{s,t} - y^\prime_s \mathbbm{x}_{s,t} \right\|
 		\leq &C_\alpha |t - s|^{3\alpha} \left( \ltn x\rtn_{\alpha, [s,t]}  \ltn R^y \rtn_{2\alpha, \Delta_{[s,t]}} \right. \nonumber\\
 		&\left. + \ltn y^\prime \rtn_{\alpha, [s,t]}  \ltn \mathbb{x} \rtn_{2\alpha, \Delta_{[s,t]}} \right).
 	\end{align}
 \end{thm}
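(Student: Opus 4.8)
The plan is to recognize this as the standard Gubinelli construction of the rough integral and to obtain it from the sewing lemma applied to the local two-point approximation
\[
\Xi_{s,t} := y_s\, x_{s,t} + y_s^\prime\, \mathbbm{x}_{s,t}, \qquad (s,t)\in\Delta_I .
\]
The compensated Riemann sums in the statement are exactly $\sum_{[u,v]\in\mathcal{P}(s,t)}\Xi_{u,v}$, so showing that this germ is \emph{almost additive} with a Hölder exponent strictly larger than $1$ will simultaneously yield convergence of the limit and the displayed estimate. First I would record the three structural facts available: $x$ has additive increments, $\mathbbm{x}$ obeys the Chen relation, and $(y,y^\prime)$ is controlled, i.e. $y_{s,t}=y_s^\prime x_{s,t}+R^y_{s,t}$.

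The central computation is to expand $(\delta\Xi)_{s,u,t} := \Xi_{s,t}-\Xi_{s,u}-\Xi_{u,t}$. Using $x_{s,t}=x_{s,u}+x_{u,t}$ the first-order part collapses to $-\,y_{s,u}x_{u,t}$, while Chen's relation $\mathbbm{x}_{s,t}=\mathbbm{x}_{s,u}+\mathbbm{x}_{u,t}+x_{s,u}\otimes x_{u,t}$ turns the second-order part into $-\,y^\prime_{s,u}\mathbbm{x}_{u,t}+y_s^\prime(x_{s,u}\otimes x_{u,t})$. Substituting the controlled decomposition $y_{s,u}=y_s^\prime x_{s,u}+R^y_{s,u}$ makes the term $y_s^\prime x_{s,u}x_{u,t}$ cancel against $y_s^\prime(x_{s,u}\otimes x_{u,t})$, leaving the clean identity
\[
(\delta\Xi)_{s,u,t} = -\,R^y_{s,u}\,x_{u,t} - y^\prime_{s,u}\,\mathbbm{x}_{u,t}.
\]
Bounding each factor by its seminorm over $[s,t]$ and using $(u-s),(t-u)\le (t-s)$ then gives
\[
\|(\delta\Xi)_{s,u,t}\| \le |t-s|^{3\alpha}\Big(\ltn x\rtn_{\alpha,[s,t]}\ltn R^y\rtn_{2\alpha,\Delta_{[s,t]}} + \ltn y^\prime\rtn_{\alpha,[s,t]}\ltn\mathbbm{x}\rtn_{2\alpha,\Delta_{[s,t]}}\Big).
\]

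Since $\alpha>\tfrac13$ forces $3\alpha>1$, the exponent is admissible, and I would now invoke the sewing lemma: there is a unique additive functional $\mathcal{I}_{s,t}$ with $\|\mathcal{I}_{s,t}-\Xi_{s,t}\|\lesssim|t-s|^{3\alpha}$, it is the limit of the compensated Riemann sums, and the resulting constant depends on $\alpha$ alone (through a convergent series such as $\sum_{k\ge 1}k^{-3\alpha}$). Setting $\int_s^t y_u\,d\mathbf{x}_u:=\mathcal{I}_{s,t}$ delivers both the existence of the limit and estimate \eqref{roughEst} with $C_\alpha>1$. Should the sewing lemma not be quoted, the same conclusion follows from its internal mechanism: on any partition one deletes an interior point, which changes the sum precisely by one $\delta\Xi$ term, and a superadditivity/pigeonhole argument always produces a removable point whose contribution is summably small because $3\alpha>1$; iterating yields a Cauchy estimate and the same bound in the limit.

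The main obstacle is the algebraic cancellation in the key identity, namely arranging the tensor contraction $y_s^\prime x_{s,u}x_{u,t}=y_s^\prime(x_{s,u}\otimes x_{u,t})$ so that only the \emph{higher-order} remainders $R^y_{s,u}x_{u,t}$ and $y^\prime_{s,u}\mathbbm{x}_{u,t}$ survive; everything downstream is a mechanical application of the sewing step. A secondary point requiring care is that the seminorms in the bound must be taken over the full interval $[s,t]$ rather than over the small blocks, so that the localization to each subinterval is uniform and the final constant remains purely $\alpha$-dependent.
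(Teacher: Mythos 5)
Your proposal is correct: the germ $\Xi_{s,t}=y_s x_{s,t}+y_s^\prime\mathbbm{x}_{s,t}$, the computation of $(\delta\Xi)_{s,u,t}=-R^y_{s,u}x_{u,t}-y^\prime_{s,u}\mathbbm{x}_{u,t}$ via Chen's relation and the controlled decomposition, and the application of the sewing lemma with exponent $3\alpha>1$ constitute exactly the standard argument. The paper itself states this theorem without proof, citing \cite{MR4174393}, and the proof given there (Friz--Hairer, Theorem 4.10 via the sewing lemma) is precisely the one you have reconstructed, so your approach coincides with the source's.
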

 
 \begin{rem}
 	In particular, the rough integral above may also be interpreted in the variation norm setting. Let $p = \frac{1}{\alpha}$ with $\alpha \in \left(\frac{1}{3}, \frac{1}{2}\right]$, and suppose $\mathbf{x} \in \mathscr{C}^\alpha(I; \mathbb{R}^d)$, $y \in C^{\alpha}(I;\mathbb{R}^{m\times d})$. Then there exists a constant $C_p>1$ such that  rough integral defined in Theorem~\ref{rough est} satisfies
 	\begin{align}
 		\left\| \int_s^t y_u \, \mathrm{d}\mathbf{x}_u - y_s x_{s,t} - y^\prime_s \mathbb{x}_{s,t} \right\|
 		\leq &C_p \left( \ltn x\rtn_{p\text{-var}, [s,t]} \ltn R^y\rtn_{q\text{-var}, \Delta_{[s,t]}} \right. \nonumber\\
 		&\left. + \|y^\prime\|_{p-var, [s,t]}  \|\mathbbm{x}\|_{q-var, \Delta_{[s,t]}} \right).
 	\end{align}
 \end{rem}
\section{Young  and  rough differential equations}

Let $(\Omega,\mathcal{F},\mathbb{P}^H)$ be a   probability space with repesct to the fractional Brownian motion $B^H$.

 Firstly, we consider the fractional Brownian motion $B^H$ with Hurst index $H>\frac{1}{2}$ in equations \eqref{Sec1:eq 1.1}, \eqref{Sec1:pure Young}, \eqref{Sec1:bpure Young}, \eqref{Sec1:zeq} and \eqref{Sec1:equi-eq}.  According to Kolmogorov's criteria (e.g., \cite{MR1472487}), fractional Brownian motion $\{B^H_t\}_{t\in\mathbb{R}}$ admits an $\alpha^{\prime}$-H\"{o}lder continuous version on any finite interval, where $\frac{1}{2}<\alpha<\alpha^\prime<H$. In other words, almost all trajectories $\Omega\ni W_t$ of $ B_t^H$ are $\alpha^\prime$-H\"{o}lder continuous on any finite interval. Thus, we  use $W_t$ to replace $B_t^H$ in subsequent analysis,
it is clear that  $\alpha^\prime$-H\"{o}lder  continuous version  $W\in C^{\alpha}(I;\mathbb{R}^d)$. 

Moreover, we also consider the geometric fractional Brownian rough path with Hurst index $H\in \left(\frac{1}{3}, \frac{1}{2}\right] $ in equations \eqref{Sec1:eq 1.1r}, \eqref{Sec1:pure rough}, \eqref{Sec1:bpure Youngr}, \eqref{Sec1:zeqr} and \eqref{Sec1:equi-eqr}. According to \cite[Lemma 31]{MR4097587}, the fractional Brownian motion $ B^H$ can almost surely be lifted to a geometric rough path $ \mathbf{B}^H=:(B^H, \mathbb{B}^H) \in \mathscr{C}^{\alpha^\prime}_{g}(I; \mathbb{R}^d) $, where $ \frac{1}{3} < \alpha < \alpha^\prime<H $. That is to say that, for almost all $W\in\Omega$, it can be lifted as   $\mathbf{W}=(W,\mathbb{W})\in \mathscr{C}_{g}^{\alpha^\prime}(I;\mathbb{R}^d)$. Thus, we use $\mathbf{W}$ to repalce $\mathbf{B}^H$ in these equations. 

Furthermore, the coefficients $f,g,\sigma$ satisfy the following  assumptions:
\begin{description}
	\item[A1] The functions $f$ and $g$ are locally Lipschitz and dissipative, i.e.,  there exist constants $D_1\geq 0$,$D_2>0$ such that
	 \begin{equation}\label{Sec3:3.2}
	 	\begin{aligned}
	 		\<y,f(y)\>\leq \|y\|(D_1-D_2\|y\|) \quad\text{and}\quad 	\<y,g(y)\>\leq \|y\|(D_1-D_2\|y\|)~~\forall y\in\mathbb{R}^m;
	 	\end{aligned}
	 \end{equation}
 Moreover, $f$ and $g$ exhibt linear growth  in the perpendicular direction, i.e., there exists $C_{f,g}$ such that
  \begin{equation}\label{Sec3:perpendicular direction}
 	\begin{aligned}
 		&\left\|f(y)-\frac{\<y,f(y)\>}{\|y\|^2}y\right\|\leq C_{f,g}(\|y\|+1)\quad\forall y\in\mathbb{R}^m ~\text{and}~y\neq 0,\\	&\left\|g(y)-\frac{\<y,g(y)\>}{\|y\|^2}y\right\|\leq C_{f,g}(\|y\|+1)\quad\forall y\in\mathbb{R}^m~\text{and}~y\neq 0.
 	\end{aligned}
 \end{equation}
\item [A2] $\sigma\in C_b^3(\mathbb{R}^m;\mathbb{R}^{m\times d})$, i.e., there exists	$C_{\sigma}<\infty$ such that 
\begin{equation}
	\begin{aligned}
	&C_{\sigma}:=\max\{\|\sigma\|_{\infty},\|D\sigma\|_{\infty},\|D^2\sigma\|_{\infty},\|D^3\sigma\|_{\infty}\}.
	\end{aligned}	
\end{equation}
\end{description}
\begin{rem}
	For the above functions $f$ and $g$, define $\pi_yf(y):=\frac{\<y,f(y)\>}{\|y\|^2}y$ and $\pi_yg(y):=\frac{\<y,g(y)\>}{\|y\|^2}y$ for any $y\neq 0$.  Note that \eqref{Sec3:perpendicular direction} is equivalent to the following: for any $y\in\mathbb{R}^m$ and $y\neq 0$, $f$ and $g$ can be decomposed as
		  \begin{equation}\label{Sec3:perpendicular direction-R}
		 	\begin{aligned}
		 		&f(y)=\frac{\<y,f(y)\>}{\|y\|^2}y+\pi_{y}^{\bot}f(y),\quad \text{where}\quad\pi_y^{\bot}=1-\pi_y,~\|\pi_{y}^{\bot}f(y)\|\leq C_{f,g}(\|y\|+1), \\	&g(y)=\frac{\<y,g(y)\>}{\|y\|^2}y+\pi_{y}^{\bot}g(y),\quad \text{where}\quad\pi_y^{\bot}=1-\pi_y,~\|\pi_{y}^{\bot}g(y)\|\leq C_{f,g}(\|y\|+1).
		 	\end{aligned}
		 \end{equation}
\end{rem}

To achieve  the synchronization of the system \eqref{Sec1:eq 1.1} and \eqref{Sec1:eq 1.1r}, we need to  introduce a controller  to couple variables $Y^1$ and $Y^2$.  In general,  let $T_1(\kappa,Y^1,Y^2)$ and $T_2(\kappa,Y^1,Y^2)$ be general controllers,  which  will  be specifically given in the  corresponding cases. Then, for almost all $W\in\Omega$, we consider  the following equations
\begin{equation}\label{Sec3:coupled eq}
	\begin{aligned}
		\left\{\begin{array}{l}
		dY^1_t=f(Y^1_t)dt+T_1(\kappa,Y^1_t,Y^2_t)dt+\sigma(Y^1_t)dW_t,\\
		dY^2_t=g(Y^2_t)dt+T_2(\kappa,Y_t^1,Y^2_t)dt+\sigma(Y^2_t)dW_t,
	\end{array}\right.
	\end{aligned}
\end{equation}
and 
\begin{equation}\label{Sec3:coupled eq1}
	\begin{aligned}
		\left\{\begin{array}{l}
			dY^1_t=f(Y^1_t)dt+T_1(\kappa,Y^1_t,Y^2_t)dt+\sigma(Y^1_t)d\mathbf{W}_t,\\
			dY^2_t=g(Y^2_t)dt+T_2(\kappa,Y_t^1,Y^2_t)dt+\sigma(Y^2_t)d\mathbf{W}_t,
		\end{array}\right.
	\end{aligned}
\end{equation}
where $\kappa\geq 0$ represents the intensity of coupling. We only give the well-posedness of the specific version of \eqref{Sec3:coupled eq} and \eqref{Sec3:coupled eq1}, and  the  well-posedness of uncoupled equation \eqref{Sec1:eq 1.1} and \eqref{Sec1:eq 1.1r} can be obtained in a similar way. Our strategy to get the well-posedness of the specific version of \eqref{Sec3:coupled eq} and   \eqref{Sec3:coupled eq1} is to use the Doss-Sussmann transformation, this technique is introduced by Sussmann \cite{MR461664}.  
Next, our   goal  is to establish the well-posedness of specific  equations  \eqref{Sec1:equi-eq} and \eqref{Sec1:equi-eqr}. 
\begin{thm}\label{Sec:thm 3.1}
	Let $\frac{1}{2}<\alpha<\alpha^\prime<H$.  Assume that  assumptions \textbf{A1} and \textbf{A2} hold.  For almost all $W\in\Omega$, the coupled system \eqref{Sec1:equi-eq} admits a unique solution in $C^{\alpha}([a,b];\mathbb{R}^m\times\mathbb{R}^m)$.
\end{thm}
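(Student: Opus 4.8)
The plan is to solve the random ordinary differential equation \eqref{Sec1:zeq} pathwise for almost every $W\in\Omega$ and then to recover \eqref{Sec1:equi-eq} through the Doss--Sussmann transformation $Y^i_t=\varphi(t,B^H,Z^i_t)$. First I would record the facts about the pure--noise flows that make the two systems equivalent: under \textbf{A2} the Young equations \eqref{Sec1:pure Young} and \eqref{Sec1:bpure Young} are globally well posed, the solution maps $\varphi(t,B^H,\cdot)$ and $\psi(t,B^H,\cdot)$ are Fr\'echet differentiable in their (terminal) data with derivatives bounded on compact sets, and the backward derivative inverts the forward one, namely $\frac{\partial\psi}{\partial h}(a,B^H,\varphi(t,B^H,z))=\big(\frac{\partial\varphi}{\partial y}(t,B^H,z)\big)^{-1}$ (cf. \cite{MR4385780}). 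Differentiating $Y^i_t=\varphi(t,B^H,Z^i_t)$ and using that $\varphi$ solves \eqref{Sec1:pure Young} then shows that $(Y^1,Y^2)$ solves \eqref{Sec1:equi-eq} if and only if $(Z^1,Z^2)$ solves \eqref{Sec1:zeq}, so it suffices to produce a unique global solution of \eqref{Sec1:zeq} on $[a,b]$.

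For a fixed admissible sample path $W$, the right-hand side of \eqref{Sec1:zeq} is continuous in $t$ and, by the differentiability and local Lipschitz continuity of $\varphi$, of $\frac{\partial\psi}{\partial h}$, and of $f,g$, locally Lipschitz in $(Z^1,Z^2)$; the Picard--Lindel\"of theorem therefore yields a unique local solution. The whole difficulty is to rule out finite-time blow up, i.e.\ to obtain an a priori bound on $[a,b]$, and this is where the dissipativity \textbf{A1} must be exploited. I expect this to be the main obstacle, because the conjugating factor $A_t:=\frac{\partial\varphi}{\partial y}(t,B^H,Z_t)$ is in general large, so the naive estimate $\|A_t^{-1}f(\varphi(t,B^H,Z_t))\|\le\|A_t^{-1}\|\,\|f(\cdot)\|$ fails to control the possibly superlinear growth of $f$ and does not see its one-sided dissipative structure.

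To extract that structure I would localize using a greedy sequence $\{\tau_i\}_{i=0}^{N}$, $N=N_{\gamma,[a,b],\alpha}(W)$, associated with the $\alpha$-H\"older semi-norm of $W$ as in \eqref{Holder stop}, so that $\ltn W\rtn_{\alpha,[\tau_i,\tau_{i+1}]}<\gamma$ and $(\tau_{i+1}-\tau_i)^\alpha<\gamma$ on each block. On $[\tau_i,\tau_{i+1}]$ the local pure-noise flow restarted at $\tau_i$ is $C\gamma$-close to the identity together with its derivative, so the conjugated drift inherits the dissipativity of $f,g$ with perturbed constants. Writing $\widetilde f(t,z):=\frac{\partial\psi}{\partial h}(a,B^H,\varphi(t,B^H,z))f(\varphi(t,B^H,z))$ and the analogue $\widetilde g$ for $g$, the energy identity
\[
\frac{d}{dt}\tfrac12\big(\|Z^1_t\|^2+\|Z^2_t\|^2\big)=\big\langle Z^1_t,\widetilde f(t,Z^1_t)\big\rangle+\big\langle Z^2_t,\widetilde g(t,Z^2_t)\big\rangle-\kappa\|Z^1_t-Z^2_t\|^2
\]
shows that the coupling contributes the nonpositive term $-\kappa\|Z^1_t-Z^2_t\|^2$, while the radial part of \eqref{Sec3:3.2} gives, for each component, a bound of the form $\|Z^j\|(\widetilde D_1-\widetilde D_2\|Z^j\|)$ with $\widetilde D_2=D_2-O(\gamma)>0$ once $\gamma$ is chosen small, the off-radial error being absorbed by the perpendicular linear growth \eqref{Sec3:perpendicular direction}. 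This yields a uniform bound on $\|Z_t\|$ over $[\tau_i,\tau_{i+1}]$ depending only on the block, and chaining the bounds across the $N$ greedy blocks produces a bound on all of $[a,b]$, hence a global solution.

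Finally I would transfer the result back: setting $Y^i_t=\varphi(t,B^H,Z^i_t)$ gives a solution of \eqref{Sec1:equi-eq}, and since $Z^i\in C^1([a,b];\mathbb{R}^m)$ while $t\mapsto\varphi(t,B^H,\cdot)$ is $\alpha$-H\"older (inheriting the regularity of $W\in C^\alpha$), the composition lies in $C^{\alpha}([a,b];\mathbb{R}^m\times\mathbb{R}^m)$. Uniqueness for \eqref{Sec1:equi-eq} then follows from the uniqueness for \eqref{Sec1:zeq} through the bijection $Z\leftrightarrow Y$ furnished by the flow $\varphi$, completing the proof.
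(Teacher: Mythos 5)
Your proposal follows essentially the same route as the paper's own proof: both pass to the random ODE \eqref{Sec1:zeq} via the Doss--Sussmann transformation, obtain local solvability from the locally Lipschitz right-hand side (the paper via truncation plus the Banach fixed-point theorem, you via Picard--Lindel\"of), derive the a priori bound by combining the dissipativity in \textbf{A1} with a greedy-stopping-time localization that keeps the conjugating flow and its derivative within $O(\lambda)$ of the identity, and transfer back through $Y^i_t=\varphi(t,W,Z^i_t)$ using the inverse relation between $\frac{\partial\varphi}{\partial y}$ and $\frac{\partial\psi}{\partial h}$. The only cosmetic differences are your use of the H\"older greedy times \eqref{Holder stop} in place of the $p$-variation ones, and your appeal to ``differentiating'' the composition where the paper, since $\varphi$ is only H\"older in $t$, justifies this step by the Riemann-sum increment computation \eqref{Sec3:3.55}--\eqref{Sec3:3.57}.
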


\begin{thm}\label{Sec:thm 3.1r}
	Let $\frac{1}{3}<\alpha<\alpha^\prime<H\leq \frac{1}{2}$.  Assume that  assumptions \textbf{A1} and \textbf{A2} hold. For almost all $W\in\Omega$,  the coupled system \eqref{Sec1:equi-eqr} admits a unique solution in $\mathcal{D}_{W}^{2\alpha}([a,b];\mathbb{R}^m\times\mathbb{R}^m)$.
\end{thm}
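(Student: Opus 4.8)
The plan is to reduce both \eqref{Sec1:equi-eq} and \eqref{Sec1:equi-eqr} to the random ordinary differential equations \eqref{Sec1:zeq} and \eqref{Sec1:zeqr} via the Doss--Sussmann transformation $Y^i_t=\varphi(t,W,Z^i_t)$, to solve the latter by classical ODE theory for a.e.\ fixed $W$, and to transport the solution back. I treat the Young case ($\frac12<\alpha<\alpha'<H$) and the rough case ($\frac13<\alpha<\alpha'<H\le\frac12$) in parallel, the only differences being that the pure-noise flow is built from the Young integral (Remark \ref{Sec2:Young est-H}) in the former and from the rough integral (Theorem \ref{rough est}) in the latter, and that the target space is $C^\alpha([a,b];\mathbb R^m\times\mathbb R^m)$ versus $\mathcal{D}^{2\alpha}_W([a,b];\mathbb R^m\times\mathbb R^m)$. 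The first step is to record the properties of the solution operators of \eqref{Sec1:pure Young}/\eqref{Sec1:pure rough} and their backward companions. Under \textbf{A2} the pure equations are well posed and generate a flow $\varphi(t,W,\cdot)$ that is twice Fr\'echet differentiable in the base point, whose Jacobian $\frac{\partial\varphi}{\partial y}(t,W,\cdot)$ solves the linear variational equation $dJ_t=D\sigma(\varphi(t,W,\cdot))J_t\,dW_t$ with $J_a=\mathrm{Id}$, and similarly for $\psi$. Because $\sigma,D\sigma,D^2\sigma$ are bounded, these are affine Young (resp.\ rough) equations with bounded coefficients, so on $[a,b]$ one obtains, uniformly in the base point and for a.e.\ $W$, the bounds $\|\varphi(t,W,z)-z\|\le C(W)$, $\sup_z\|\frac{\partial\varphi}{\partial y}(t,W,z)\|\le C(W)$ and $\sup_z\|\frac{\partial\psi}{\partial h}(a,W,\varphi(t,W,z))\|\le C(W)$, together with local Lipschitz dependence of these derivatives on $z$; the $C^3_b$-regularity is exactly what makes the Jacobian $C^1$ in the base point. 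Moreover $\frac{\partial\psi}{\partial h}(a,W,\varphi(t,W,z))=\bigl(\frac{\partial\varphi}{\partial y}(t,W,z)\bigr)^{-1}$, which is the identity that makes the transformation consistent.

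With these facts, for a.e.\ fixed $W$ the right-hand sides of \eqref{Sec1:zeq} and \eqref{Sec1:zeqr} are continuous in $t$ and, by the local Lipschitz continuity of $f,g$ composed with the above, locally Lipschitz in $(Z^1,Z^2)\in\mathbb R^m\times\mathbb R^m$; Picard--Lindel\"of then yields a unique maximal solution $Z=(Z^1,Z^2)$ on $[a,T_{\max})$. The step I expect to be the main obstacle is to upgrade this to $T_{\max}=b$, that is, to prove $\sup_{t<T_{\max}}\|Z_t\|<\infty$, because one must transfer the dissipativity \eqref{Sec3:3.2}, which is phrased for $\langle Y,f(Y)\rangle$ with $Y=\varphi(t,W,Z)$, through the nonlinear factor $\frac{\partial\psi}{\partial h}(a,W,\varphi(t,W,\cdot))$ to the solved variable $Z$, while $f$ is allowed to grow superlinearly.

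For this a priori estimate I compute $\frac12\frac{d}{dt}\bigl(\|Z^1_t\|^2+\|Z^2_t\|^2\bigr)$: the coupling terms collapse to $-2\kappa\|Z^1_t-Z^2_t\|^2\le0$, so only the terms $\langle Z^i,\frac{\partial\psi}{\partial h}(a,W,\varphi)\,f(\varphi(t,W,Z^i))\rangle$ remain. Splitting $f$ by the decomposition \eqref{Sec3:perpendicular direction-R}, the perpendicular part is controlled by its linear growth \eqref{Sec3:perpendicular direction} together with $\|\varphi(t,W,z)\|\le\|z\|+C(W)$, producing a contribution $\le C(W)(1+\|Z\|^2)$; the radial part carries the possibly superlinear but dissipative negative coefficient $\frac{\langle Y,f(Y)\rangle}{\|Y\|^2}$, which helps only when $\langle Z^i,\frac{\partial\psi}{\partial h}(a,W,\varphi)\,Y\rangle\ge0$. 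To secure this sign I localize on the greedy stopping-time partition $\{\tau_k\}$ of \eqref{Holder stop} (resp.\ \eqref{Sec2:pstopping times1}) with $\gamma$ small: on each $[\tau_k,\tau_{k+1}]$ the local Jacobians are so close to the identity that $\frac{\partial\psi}{\partial h}(a,W,\varphi)\approx\mathrm{Id}$ and $\varphi\approx\mathrm{id}$, whence $\langle Z^i,\frac{\partial\psi}{\partial h}(a,W,\varphi)\,Y\rangle\approx\|Z^i\|^2\ge0$ and the radial term becomes $\le C(W)(1+\|Z\|)$. This yields $\frac{d}{dt}\|Z_t\|^2\le C(W)(1+\|Z_t\|^2)$ on each subinterval, and Gr\"onwall together with patching over the finitely many subintervals, whose number is bounded by \eqref{Sec2:pstopping times numbers} (resp.\ \eqref{Sec2:pstopping times numbers1}), gives a bound on all of $[a,b]$, forcing $T_{\max}=b$.

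Finally I set $Y^i_t:=\varphi(t,W,Z^i_t)$. Since $t\mapsto\varphi(t,W,z)$ solves the pure equation and $t\mapsto Z^i_t$ is $C^1$, the chain rule for the Doss--Sussmann transformation gives $dY^i_t=\sigma(Y^i_t)\,dW_t+\frac{\partial\varphi}{\partial y}(t,W,Z^i_t)\dot Z^i_t\,dt$, and substituting \eqref{Sec1:zeq}/\eqref{Sec1:zeqr} together with the inverse identity of Step~1 reproduces exactly the drifts of \eqref{Sec1:equi-eq}/\eqref{Sec1:equi-eqr}; the H\"older (resp.\ controlled-path) regularity of $\varphi$ in $t$ shows $Y\in C^\alpha$ in the Young case and $(Y,\sigma(Y))\in\mathcal{D}^{2\alpha}_W$ in the rough case. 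For uniqueness, any solution $Y$ of \eqref{Sec1:equi-eq}/\eqref{Sec1:equi-eqr} in the stated space yields, through the inverse flow $Z^i_t=\varphi(t,W,\cdot)^{-1}(Y^i_t)$, a solution of the ODE \eqref{Sec1:zeq}/\eqref{Sec1:zeqr}, so the uniqueness established above forces $Y$ to coincide with the constructed solution.
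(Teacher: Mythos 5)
Your proposal is correct and follows essentially the same route as the paper: the paper proves the Young case (Theorem \ref{Sec:thm 3.1}) in Appendix \ref{Appendix A} by exactly this scheme --- Doss--Sussmann reduction to the random ODE \eqref{Sec1:zeq}, local solvability of that ODE (via truncation of $f,g$ and the Banach fixed-point theorem, where you use Picard--Lindel\"of with maximal solutions), an a priori bound on $\|Z_t\|$ obtained from the dissipativity and perpendicular-growth decomposition \eqref{Sec3:perpendicular direction-R} on greedy stopping-time intervals where $\frac{\partial \psi}{\partial h}(a,W,\varphi)$ is within $\frac{\lambda}{2}$ of $Id$ and $\varphi$ within $\frac{\lambda}{2}$ of the identity, and transport back through $Y_t=\varphi(t,W,Z_t)$ using the inverse-Jacobian identity \eqref{Sec3:1:3.48} --- and then, just as you do, declares the rough case \eqref{Sec1:equi-eqr} to follow by the analogous argument with rough integrals and controlled paths. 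Only minor discrepancies remain, none affecting the argument: the coupling contribution to $\frac12\frac{d}{dt}(\|Z^1_t\|^2+\|Z^2_t\|^2)$ is $-\kappa\|Z^1_t-Z^2_t\|^2$ rather than $-2\kappa\|Z^1_t-Z^2_t\|^2$; the paper needs and proves only $C^1$ Fr\'echet differentiability of the flow together with a Lipschitz Jacobian (Theorem \ref{Sec3:thm3.2}, Corollary \ref{Sec3:cor3.2}), not twice differentiability; and your appeal to a ``chain rule'' is carried out in the paper by the Riemann-sum computation \eqref{Sec3:3.55}--\eqref{Sec3:3.57}, which is the rigorous substitute for that step.
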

Even though the functions $f,g$ satisfy the assumption \textbf{A1}, it is difficult to verify that  the vector $(f,g)$ is dissipative and linear growth in the perpendicular.   Therefore, we can not directly use the results in  \cite{MR4385780} for rough case. Nonetheless, the proofs  of these two theorem  are similar as in Refs. \cite{MR4385780,MR4493559}. For readers' convenience,   we only give the poof of Theorem \ref{Sec:thm 3.1} in the Appendix \ref{Appendix A}.  For the rough case,  the global well-posedness of \eqref{Sec1:equi-eqr} can be obtained in a similar way as  the proof of Theorem \ref{Sec:thm 3.1}, then we omit its  proof.
 \section{Synchronization}\label{Sec:synchronization}
In this section, we consider the synchronization in the sense of pathwise.  Thus, the noises in equations \eqref{Sec1:eq 1.1}-\eqref{Sec1:1.12} will be replaced by the almost every given path $W\in\Omega$ or its lifted geometric rough path $\mathbf{W}$.  We now have all the necessary  tools  to complete the synchronization of the uncoupled system \eqref{Sec1:eq 1.1} and  \eqref{Sec1:eq 1.1r}.   First, we replace   $(Z^1,Z^2)$ and $(Y^1,Y^2)$ with $(Z^{1,\kappa},Z^{2,\kappa})$ and $(Y^{1,\kappa},Y^{2,\kappa})$  to emphasize  their dependence on the parameter $\kappa$.  	Let $t\in[a,b]$ and $i=1,2$,  define $ \tilde{\psi}(a,W,Z_t^i)=\frac{\partial \psi}{\partial h}(a, W,\varphi(t,W,Z_t^i))-Id$ and $ \tilde{\psi}(a,\mathbf{W},Z_t^i)=\frac{\partial \psi}{\partial h}(a,W,\varphi(t,\mathbf{W},Z_t^i))-Id$. Consequently, for Young case,  $(Z^{1,\kappa},Z^{2,\kappa})$  satisfy  the following random equation
 \begin{equation}\label{Sec4:coupled-ODE}
 	\left\{
 	\begin{aligned}
 		\dot{Z}^{1,\kappa}_t&= (Id+\tilde{\psi}(a,W,Z^{1,\kappa}_t)) f(\varphi(t,W,Z^{1,\kappa}_t))+\kappa(Z^{2,\kappa}_t-Z^{1,\kappa}_t),\\
 		\dot{Z}^{2,\kappa}_t&= (Id+\tilde{\psi}(a,W,Z^{2,\kappa}_t)) g(\varphi(t,W,Z^{2,\kappa}_t))+\kappa(Z^{1,\kappa}_t-Z^{2,\kappa}_t).
 	\end{aligned}
 	\right.
 \end{equation}
 For rough case,  $(Z^{1,\kappa},Z^{2,\kappa})$  satisfy  the following random equation
 \begin{equation}\label{Sec4:coupled-ODE1}
 	\left\{
 	\begin{aligned}
 		\dot{Z}^{1,\kappa}_t&= (Id+\tilde{\psi}(a,\mathbf W,Z^{1,\kappa}_t)) f(\varphi(t,\mathbf W,Z^{1,\kappa}_t))+\kappa(Z^{2,\kappa}_t-Z^{1,\kappa}_t),\\
 		\dot{Z}^{2,\kappa}_t&= (Id+\tilde{\psi}(a,\mathbf W,Z^{2,\kappa}_t)) g(\varphi(t,\mathbf W,Z^{2,\kappa}_t))+\kappa(Z^{1,\kappa}_t-Z^{2,\kappa}_t).
 	\end{aligned}
 	\right.
 \end{equation}
Note that these two equations with  initial data $(Y_a^1,Y_a^2)$.

 Our next goal is to establish the convergence of $Z^{1,\kappa}$ and $Z^{2,\kappa}$. Specifically, for Young case, we aim to prove that there exists a stochastic process $\bar{Z}_t(W)\in C([0,T];\mathbb{R}^m)$  such that 
 $\lim_{\kappa\rightarrow \infty}Z^{1,\kappa}_t(W)=\lim_{\kappa\rightarrow \infty}Z^{2,\kappa}_t(W)=\bar{Z}_t(W)$  holds for almost all trajectories $W\in\Omega$.  Furthermore,  the stochastic process $\bar{Z}_t$ satisfies the following random ordinary differential equation
 \begin{equation}
 	\begin{aligned}
 	\dot{\bar{Z}}_t=\frac{1}{2}(Id+\tilde{\psi}(a,W,\bar{Z}_t))(f(\varphi(t,W,\bar{Z}_t))+g(\varphi(t,W,\bar{Z}_t))) 
 	\end{aligned}
 \end{equation}
 with initial data $\frac{Y_a^1+Y_a^2}{2}$.  
 
 For rough case,  we need to prove that   there exists a stochastic process $\bar{Z}_t(\mathbf W)\in C([0,T];\mathbb{R}^m)$  such that 
 $\lim_{\kappa\rightarrow \infty}Z^{1,\kappa}_t(\mathbf W)=\lim_{\kappa\rightarrow \infty}Z^{2,\kappa}_t(\mathbf W)=\bar{Z}_t(\mathbf W)$  holds for almost all trajectories $W\in\Omega$.  Furthermore,  the stochastic process $\bar{Z}_t$ satisfies the follow random ordinary differential equation
 \begin{equation}
 	\begin{aligned}
 		\dot{\bar{Z}}_t=\frac{1}{2}(Id+\tilde{\psi}(a,\mathbf W,\bar{Z}_t))(f(\varphi(t,\mathbf W,\bar{Z}_t))+g(\varphi(t,\mathbf W,\bar{Z}_t))) 
 	\end{aligned}
 \end{equation}
 with initial data $\frac{Y_a^1+Y_a^2}{2}$.  
 
To obtain the above convergences,  we require an estimate for the  processes $(Y^{1,\kappa},Y^{2,\kappa})$.
 \begin{lem}\label{Sec: Lemma 4.1}
 	Assume that  assumptions \textbf{A1}--\textbf{A2} hold.  For any  $\lambda\in(0,1)$,  there exists $C_{\lambda}>0$ and $\delta_\lambda>0$ such that the solution of \eqref{Sec1:equi-eq}  on the interval $[a,b]$ has the following estimate
 	\begin{equation}\label{Sec4:uniform estimate}
 		\begin{aligned}
 		\|Y^\kappa_t\|\leq e^{-\delta_\lambda(t-a)}\|Y_a\|+C_\lambda( N_{\frac{\lambda}{32C_\sigma C},[a,b],p}(W)+1),
 		\end{aligned}
 	\end{equation}
 it	holds for  almost every given $W\in\Omega$ and $t\in[a,b]$.
 \end{lem}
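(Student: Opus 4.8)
The plan is to carry out all the estimates in the transformed $Z$-coordinates of \eqref{Sec4:coupled-ODE}, because it is precisely there that the coupling is manifestly harmless. Writing $V_Z(t)=\|Z^{1,\kappa}_t\|^2+\|Z^{2,\kappa}_t\|^2$ and differentiating along the random ODE (a genuine ODE, so no Young/stochastic correction intervenes), the two coupling terms combine to
\[
\kappa\langle Z^{1,\kappa}_t,Z^{2,\kappa}_t-Z^{1,\kappa}_t\rangle+\kappa\langle Z^{2,\kappa}_t,Z^{1,\kappa}_t-Z^{2,\kappa}_t\rangle=-\kappa\|Z^{1,\kappa}_t-Z^{2,\kappa}_t\|^2\le 0 .
\]
Hence the coupling can only help and, crucially, drops out of the upper bound \emph{uniformly} in $\kappa$. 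Passing back to $Y^{\kappa}=\varphi(t,W,Z^{\kappa})$ at the very end through the flow bounds then converts the $Z$-estimate into the desired bound \eqref{Sec4:uniform estimate} for $\|Y^{\kappa}_t\|$.

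I would fix $\gamma=\tfrac{\lambda}{32C_\sigma C}$ (with $C$ the Young--Loeve constant of \eqref{Sec2:p-Youngbounded}) and use the greedy sequence \eqref{Sec2:pstopping times} for $W$ in $p$-variation, so that $\ltn W\rtn_{p-var,[\tau_i,\tau_{i+1}]}\le\gamma$ on each subinterval and the number of intervals is controlled by \eqref{Sec2:pstopping times numbers}. On each subinterval I would first record the standard flow estimates for \eqref{Sec1:pure Young}: since $\sigma\in C^3_b$ (Assumption \textbf{A2}), the Young--Loeve bound \eqref{Sec2:p-Youngbounded} with a Gronwall argument shows that $\varphi(\cdot,W,\cdot)$, its inverse flow, and the Jacobian (equivalently $\mathrm{Id}+\tilde\psi$, via the Doss--Sussmann identity $\mathrm{Id}+\tilde\psi(a,W,Z^i)=\left(\tfrac{\partial\varphi}{\partial y}(t,W,Z^i)\right)^{-1}$) stay within $O(C_\sigma\gamma)$ of the identity on $[\tau_i,\tau_{i+1}]$. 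Summing these per-interval displacements via Lemma \ref{Sec2: equvalence Control} then shows that over all of $[a,t]$ the flow moves any point by at most $C_\lambda(N_{\gamma,[a,b],p}(W)+1)$ — an $O(N)$, not exponential, quantity.

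The core estimate is for the transformed drift $\langle Z^{i,\kappa}_t,(\mathrm{Id}+\tilde\psi(a,W,Z^{i,\kappa}_t))\,f(\varphi(t,W,Z^{i,\kappa}_t))\rangle$. Writing $Y^i=\varphi(t,W,Z^i)$, I would exploit the dissipativity \eqref{Sec3:3.2} together with the perpendicular linear-growth decomposition \eqref{Sec3:perpendicular direction}--\eqref{Sec3:perpendicular direction-R}: the radial part yields the genuine decay $-D_2\|Y^i\|^2+D_1\|Y^i\|$, while the perpendicular part, the factor $\mathrm{Id}+\tilde\psi$, and the discrepancy between $\|Z^i\|$ and $\|\varphi(Z^i)\|$ are all controlled by the flow estimates above and by $C_{f,g}$. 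This is the mechanism used by Duc \cite{MR4385780} for the single equation; combined with the non-positive coupling term it gives, on each subinterval, a differential inequality $\dot V_Z\le -\delta_\lambda V_Z+C_\lambda$, whence $\|Z^{\kappa}_t\|\le e^{-\delta_\lambda(t-\tau_i)}\|Z^{\kappa}_{\tau_i}\|+C_\lambda$ for $t\in[\tau_i,\tau_{i+1}]$. Iterating across the $N+1$ greedy intervals, the exponential factors telescope to $e^{-\delta_\lambda(t-a)}$ while the additive constants sum to at most $C_\lambda(N_{\gamma,[a,b],p}(W)+1)$; since $\varphi(a,W,\cdot)=\mathrm{Id}$ gives $\|Z^{\kappa}_a\|=\|Y_a\|$, transferring back with the $O(N)$ flow-displacement bound produces \eqref{Sec4:uniform estimate}.

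The main obstacle is not the coupling — which the $Z$-coordinate computation renders trivially non-positive and $\kappa$-uniform — but the dissipativity of the Doss--Sussmann-transformed drift. The difficulty is that $\varphi(t,W,\cdot)$ is the flow over the \emph{whole} interval $[a,t]$, so its Jacobian and its deviation from the identity are only small per greedy interval, not globally (indeed the Jacobian can grow exponentially in $N$); one must therefore show that the transformed drift keeps a net dissipative sign while all flow-induced corrections accumulate only \emph{linearly} in $N$, so as to be absorbed into the $C_\lambda(N+1)$ term, and never multiply $\kappa$. Keeping every estimate in $Z$-coordinates is exactly what prevents $\kappa$ from ever meeting the unbounded flow corrections, and the threshold $\gamma=\tfrac{\lambda}{32C_\sigma C}$ is precisely what makes the per-interval noise corrections absorbable into the dissipation rate $\delta_\lambda$.
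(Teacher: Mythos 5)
Your proposal follows essentially the same route as the paper's proof: the same greedy stopping times with threshold $\gamma=\frac{\lambda}{32C_\sigma C}$, the same energy estimate carried out in the $Z$-coordinates of \eqref{Sec4:coupled-ODE} where the coupling contributes $-\kappa\|Z^{1,\kappa}_t-Z^{2,\kappa}_t\|^2\le 0$, the same appeal to Duc's dissipativity argument (Theorem 2.1 of \cite{MR4385780}) for the per-interval inequality $\dot V\le 2\bar C_\lambda-2\delta_\lambda V$, the same iteration across the $N+1$ intervals with telescoping exponentials and linearly accumulating constants, and the same final transfer back to $Y^\kappa=\varphi(t,W,Z^\kappa)$ via the smallness of $\varphi-\mathrm{Id}$. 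The only cosmetic differences are that you make explicit the non-positivity of the coupling terms, which the paper uses silently when it drops them from the differential inequality, and that you allow the $Y$--$Z$ discrepancy to accumulate as $O(N)$ whereas the paper invokes the per-interval bound $\|\varphi(t,W,Z^\kappa_t)-Z^\kappa_t\|\le\frac{\lambda}{2}$ from \eqref{SecA:A40}; both versions land in the same $C_\lambda(N_{\frac{\lambda}{32C_\sigma C},[a,b],p}(W)+1)$ term.
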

 \begin{proof}
For almost every given $W\in\Omega$.  Consider  a  sequence of greedy stopping times  $$\left\{\tau_i\right\}_{i=0}^{N_{\frac{\lambda}{32C_\sigma C},[a,b],p}(W)}=\{\tau_{i}(\frac{\lambda}{32C_\sigma C},[a,b],p)\}_{i=0}^{N_{\frac{\lambda}{32C_\sigma C},[a,b],p}(W)},$$ where $32C_\sigma C\ltn W\rtn_{p-var,[\tau_i,\tau_{i+1}]}\leq \lambda$ and  $i=0,\cdots, N_{\frac{\lambda}{32C_\sigma C}),[a,b],p}(W)-1$. For any $t\in[\tau_i,\tau_{i+1}]$ and $i\in\{0,1,\cdots,N_{\frac{\lambda}{32C_\sigma C}),[a,b],p}(W)-1\}$,   from  \eqref{Sec4:coupled-ODE} and Young's inequality,  we have 
 \begin{equation}
 	\begin{aligned}
 	\frac{1}{2}\frac{d\|Z_t^\kappa\|^2}{dt}&=\frac{1}{2}\frac{d(\|Z^{1,\kappa}_t\|^2+\|Z^{2,\kappa}_t\|^2)}{dt}\\
 	&\leq \<(Id+\tilde{\psi}(a,W,Z^1_t)) f(\varphi(t,W,Z^1_t)),Z_t^1\>\\
 	&~~~~+\<(Id+\tilde{\psi}(a,W,Z^2_t)) g(\varphi(t,W,Z^2_t)),Z^2_t\>.
 	\end{aligned}
 \end{equation}
 For  any $i\in\{0,1,\cdots,N_{\frac{\lambda}{32C_\sigma C},[a,b],p}(W)-1\}$, using the same arguments in Ref. \cite[Theorem 2.1]{MR4385780},  there exist constant $
 \bar{C}_\lambda>0$ and $\delta_{\lambda}>0$ such that 
 \begin{equation}\label{Sec4:ineq 4.5}
    \begin{aligned}
    	\frac{d}{dt}(\|Z^{1,\kappa}_t\|^2+\|Z^{2,\kappa}_t\|^2)\leq 2\bar{C}_\lambda-2\delta_{\lambda}(\|Z^{1,\kappa}_t\|^2+\|Z^{2,\kappa}_t\|^2),~~t\in[\tau_i,\tau_{i+1}].
    \end{aligned}
 \end{equation}
 Multiplying  both sides of  equation \eqref{Sec4:ineq 4.5} by $e^{2\delta_{\lambda}t}$,  we obtain 
 \begin{equation}\label{Sec4:ineq 4.6}
 	\begin{aligned}
 		\frac{d}{dt}\left[(\|Z^{1,\kappa}_t\|^2+\|Z^{2,\kappa}_t\|^2)e^{2\delta_{\lambda}t}\right]\leq 2\bar{C}_{\lambda}e^{2\delta_{\lambda}t},~~t\in[\tau_i,\tau_{i+1}],~ i=0,\cdots,N_{\frac{\lambda}{32C_\sigma C},[a,b],p}(W)-1.
 	\end{aligned}
 \end{equation}
For $ i=0,\cdots,N_{\frac{\lambda}{32C_\sigma C}),[a,b],p}(W)-1,$ the inequality  \eqref{Sec4:ineq 4.6} shows that  
 \begin{equation}
 	\begin{aligned}
 		\|Z^{1,\kappa}_t\|^2+\|Z^{1,\kappa}_t\|^2\leq (	\|Z^{1,\kappa}_{\tau_i}\|^2+\|Z^{1,\kappa}_{\tau_i}\|^2)e^{-2\delta_\lambda (t-\tau_i)}+\frac{\bar{C}_{\lambda}}{\delta_{\lambda}},~~t\in[\tau_i,\tau_{i+1}].
 	\end{aligned}
 \end{equation}
This implies that
\begin{equation}\label{Sec4:4.10}
	\|Z_t^\kappa\|\leq \|Z_{\tau_i}^\kappa\|e^{-\delta_\lambda (t-\tau_i)}+\sqrt{\frac{\bar{C}_\lambda}{\delta_\lambda}},\quad \forall t\in[\tau_i,\tau_{i+1}],~ i=0,\cdots,N_{\frac{\lambda}{32C_\sigma C},[a,b],p}(W)-1.
\end{equation}
 By iterating this inequality, we obtain 
 \begin{equation}\label{Sec4:eq 4.8}
 	\begin{aligned}
 		\|Z^\kappa_{\tau_{i+1}}\|\leq \|Z_{\tau_{i}}^\kappa\|e^{-\delta_\lambda (\tau_{i+1}-\tau_i)}+\sqrt{\frac{\bar{C}_\lambda}{\delta_\lambda}},\quad i=0,\cdots,N_{\frac{\lambda}{32C_\sigma C},[a,b],p}(W)-1.
 	\end{aligned}
 \end{equation}
 By repeatedly using inequality \eqref{Sec4:eq 4.8},  for any $i\in\left\{0,1\cdots,N_{\frac{\lambda}{32C_\sigma C},[a,b],p}(W)-1 \right\}$, we know that 
 \begin{equation}\label{Sec4: eq 4.9}
 	\begin{aligned}
 		\|Z^\kappa_{\tau_{i+1}}\|&\leq \|Z_{\tau_{i}}^\kappa\|e^{-\delta_\lambda (\tau_{i+1}-\tau_i)}+\sqrt{\frac{\bar{C}_\lambda}{\delta_\lambda}}\\
 		&\leq \sqrt{\frac{\bar{C}_\lambda}{\delta_\lambda}}+\sqrt{\frac{\bar{C}_\lambda}{\delta_\lambda}}e^{-\delta_\lambda (\tau_{i+1}-\tau_i)}+\|Z_{\tau_{i-1}}^\kappa\|e^{-\delta_\lambda (\tau_{i+1}-\tau_{i-1})}\\
 		&\leq \sqrt{\frac{\bar{C}_\lambda}{\delta_\lambda}}+\sqrt{\frac{\bar{C}_\lambda}{\delta_\lambda}}e^{-\delta_\lambda (\tau_{i+1}-\tau_i)}+\sqrt{\frac{\bar{C}_\lambda}{\delta_\lambda}}e^{-\delta_\lambda (\tau_{i+1}-\tau_{i-1})}\\
 		&~~~~+\|Z_{\tau_{i-2}}^\kappa\|e^{-\delta_\lambda (\tau_{i+1}-\tau_{i-2})}\\
 		&\leq  (i+1)\sqrt{\frac{\bar{C}_\lambda}{\delta_\lambda}}+\|Z_{\tau_0}^\kappa\|e^{-\delta_{\lambda}(\tau_{i+1}-\tau_0)}\\
 		&=N_{\frac{\lambda}{32C_\sigma C}),[a,b],p}(W)\sqrt{\frac{\bar{C}_\lambda}{\delta_\lambda}}+\|Y_a\|e^{-\delta_{\lambda}(\tau_{i+1}-\tau_0)}.\\
 	\end{aligned}
 \end{equation}
 For each $t\in[\tau_i,\tau_{i+1}]$ and $i\in \left\{0,1,\cdots, N_{\frac{\lambda}{32C_\sigma C},[a,b],p}(W)-1\right\}$, the above inequality  and \eqref{Sec4:4.10} implies  that 
 \begin{equation}\label{Sec4: eq 4.10}
 	\begin{aligned}
 	  \|Z_t^\kappa\|&\leq \|Z_{\tau_i}^\kappa\|e^{-\delta_\lambda (t-\tau_i)}+\sqrt{\frac{\bar{C}_\lambda}{\delta_\lambda}}\\
 	  &\leq \|Y_a\|e^{-\delta_\lambda(t-\tau_0)}+\left(N_{\frac{\lambda}{32C_\sigma C},[a,b],p}(W)+1\right)\sqrt{\frac{\bar{C}_\lambda}{\delta_\lambda}}.
 	\end{aligned}
 \end{equation}
 Since $\|\varphi(t,W,Z_t^\kappa)-Z^\kappa_t\|\leq \frac{\lambda}{2}$ (see \eqref{SecA:A40}) on each  stopping times interval $[\tau_i,\tau_{i+1}]$ and $Y_t^\kappa=\varphi(t,W,Z_t^\kappa)$,  there exists a constant $C_{\lambda}:=\frac{\lambda}{2}+\sqrt{\frac{\bar{C}_\lambda}{\delta_{\lambda}}}$ such that 
 \begin{equation}\label{Sec4:eq 4.11}
 	\begin{aligned}
 		\|Y_t^\kappa\|&\leq \|Y_a\|e^{-\delta_\lambda(t-\tau_0)}+\left(N_{\frac{\lambda}{32C_\sigma C},[a,b],p}(W)+1\right)C_\lambda\\
 		&=\|Y_a\|e^{-\delta_\lambda(t-a)}+\left(N_{\frac{\lambda}{32C_\sigma C},[a,b],p}(W)+1\right)C_\lambda.
 	\end{aligned}
 \end{equation}
This completes the proof.
 \end{proof}
 
  \begin{lem}\label{Sec: Lemma 4.1r}
 	Assume that  assumptions \textbf{A1}--\textbf{A2} hold.  For any  $\lambda\in(0,1)$,  there exists $C_{\lambda}>0$ and $\delta_\lambda>0$ such that the solution of \eqref{Sec1:equi-eqr}  on the interval $[a,b]$ has the following estimate
 	\begin{equation}\label{Sec4:uniform estimater}
 		\begin{aligned}
 			\|Y^\kappa_t\|\leq e^{-\delta_\lambda(t-a)}\|Y_a\|+C_\lambda( N_{\frac{\lambda}{16C_\sigma C},[a,b],p}(\mathbf W)+1),
 		\end{aligned}
 	\end{equation}
 	 it	holds for  almost every given $W\in\Omega$ and $t\in[a,b]$, where $\mathbf{W}$ is a geometric rough path lifted by $W$. 
 \end{lem}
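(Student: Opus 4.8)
The plan is to mirror the proof of Lemma \ref{Sec: Lemma 4.1} line for line, replacing the Young machinery by its rough-path counterpart. First I fix an almost-every trajectory $W\in\Omega$ together with its lift $\mathbf{W}=(W,\mathbb{W})\in\mathscr{C}_g^{\alpha^\prime}(I;\mathbb{R}^d)$, and construct the greedy sequence of stopping times $\{\tau_i\}_{i=0}^{N}$ with $N=N_{\frac{\lambda}{16C_\sigma C},[a,b],p}(\mathbf{W})$ as in \eqref{Sec2:pstopping times1}, now with respect to the rough $p$-variation semi-norm, so that $16C_\sigma C\ltn\mathbf{W}\rtn_{p-var,[\tau_i,\tau_{i+1}]}\leq\lambda$ on each subinterval. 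The finiteness of $N$ is guaranteed by the bound \eqref{Sec2:pstopping times numbers1}.

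On each interval $[\tau_i,\tau_{i+1}]$ I differentiate $\|Z_t^\kappa\|^2=\|Z_t^{1,\kappa}\|^2+\|Z_t^{2,\kappa}\|^2$ along \eqref{Sec4:coupled-ODE1}. The two coupling terms combine into $-\kappa\|Z_t^{1,\kappa}-Z_t^{2,\kappa}\|^2\leq 0$, so the coupling never increases the energy and may be discarded uniformly in $\kappa$; the remaining contribution is
\[
\langle (Id+\tilde{\psi}(a,\mathbf{W},Z^{1,\kappa}_t))f(\varphi(t,\mathbf{W},Z^{1,\kappa}_t)),Z^{1,\kappa}_t\rangle+\langle (Id+\tilde{\psi}(a,\mathbf{W},Z^{2,\kappa}_t))g(\varphi(t,\mathbf{W},Z^{2,\kappa}_t)),Z^{2,\kappa}_t\rangle.
\]
Following the argument of \cite[Theorem 2.1]{MR4385780}, the dissipativity and perpendicular linear growth of $f,g$ in \textbf{A1}, together with the $C_b^3$-bound on $\sigma$ in \textbf{A2}, should yield constants $\bar{C}_\lambda>0$ and $\delta_\lambda>0$, independent of $i$ and $\kappa$, for which
\[
\frac{d}{dt}\big(\|Z^{1,\kappa}_t\|^2+\|Z^{2,\kappa}_t\|^2\big)\leq 2\bar{C}_\lambda-2\delta_\lambda\big(\|Z^{1,\kappa}_t\|^2+\|Z^{2,\kappa}_t\|^2\big),\qquad t\in[\tau_i,\tau_{i+1}].
\]
The point making these constants uniform is that on each subinterval both $\|\tilde{\psi}(a,\mathbf{W},Z^{i,\kappa}_t)\|$ and the displacement $\|\varphi(t,\mathbf{W},Z^{i,\kappa}_t)-Z^{i,\kappa}_t\|$ are controlled by $\ltn\mathbf{W}\rtn_{p-var,[\tau_i,\tau_{i+1}]}\leq\frac{\lambda}{16C_\sigma C}$ through the rough integral estimate of Theorem \ref{rough est}; in particular the closeness bound $\|\varphi(t,\mathbf{W},Z^{i,\kappa}_t)-Z^{i,\kappa}_t\|\leq\frac{\lambda}{2}$ holds on every subinterval.

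With the differential inequality in hand, the remainder is identical to the Young case: multiplying by $e^{2\delta_\lambda t}$, integrating over $[\tau_i,\tau_{i+1}]$, and iterating across the $N$ subintervals gives
\[
\|Z_t^\kappa\|\leq\|Y_a\|e^{-\delta_\lambda(t-a)}+\big(N_{\frac{\lambda}{16C_\sigma C},[a,b],p}(\mathbf{W})+1\big)\sqrt{\tfrac{\bar{C}_\lambda}{\delta_\lambda}}.
\]
The Doss-Sussmann identity $Y_t^\kappa=\varphi(t,\mathbf{W},Z_t^\kappa)$ together with the closeness bound then converts this into \eqref{Sec4:uniform estimater} with $C_\lambda=\frac{\lambda}{2}+\sqrt{\bar{C}_\lambda/\delta_\lambda}$.

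The main obstacle is establishing the uniform differential inequality, i.e.\ showing that the Doss-Sussmann-corrected drift $(Id+\tilde{\psi})f(\varphi(\cdot))$ still behaves dissipatively in the rough regime. This hinges on quantitative control, on each greedy interval, of the Fréchet derivative $\frac{\partial\psi}{\partial h}(a,\mathbf{W},\cdot)$ and of the flow $\varphi(t,\mathbf{W},\cdot)$, which must now be extracted from the rough integral estimate of Theorem \ref{rough est} — involving both the path increment and the Lévy area $\mathbb{W}$ — rather than from the Young–Loève bound used in Lemma \ref{Sec: Lemma 4.1}. The threshold $\frac{\lambda}{16C_\sigma C}$ is chosen precisely so that on each subinterval $\|\tilde{\psi}(a,\mathbf{W},Z^{i,\kappa}_t)\|$ and the displacement stay below $\frac{\lambda}{2}$, enabling the dissipativity of $f,g$ to dominate; verifying these smallness bounds from the controlled-rough-path calculus is the technical heart of the argument, and it is exactly here that the $C_b^3$-regularity of $\sigma$ in \textbf{A2} is indispensable.
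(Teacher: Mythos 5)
Your proposal is correct and takes essentially the same approach as the paper: the paper in fact omits the proof of this lemma, stating only that it is obtained exactly as the proof of Lemma \ref{Sec: Lemma 4.1} (or of Theorem 2.1 in \cite{MR4385780}), and your argument is precisely that adaptation --- greedy stopping times for the rough $p$-variation semi-norm, the dissipativity differential inequality imported from \cite{MR4385780}, iteration across the subintervals, and conversion to an estimate on $Y^\kappa_t$ via the Doss--Sussmann identity with $C_\lambda=\frac{\lambda}{2}+\sqrt{\bar{C}_\lambda/\delta_\lambda}$. The details you make explicit, namely that the coupling terms contribute $-\kappa\|Z^{1,\kappa}_t-Z^{2,\kappa}_t\|^2\leq 0$ and that the smallness of $\tilde{\psi}$ and of the displacement $\|\varphi(t,\mathbf{W},Z^{i,\kappa}_t)-Z^{i,\kappa}_t\|$ on each greedy interval is what makes the constants uniform, are exactly the points left implicit in the paper's Young-case proof.
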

 The above lemma can be obtained as the proof of Lemma \ref{Sec: Lemma 4.1} or  Theorem 2.1 in Ref. \cite{MR4385780}. So we omit its proof.
 
To obtain the synchronization of  stochastic systems \eqref{Sec1:eq 1.1} and \eqref{Sec1:eq 1.1r},  we introduce the following transformation:  for any $\kappa\in\mathbb{R}^+$ and $t\in[a,b]$, let
 \begin{equation}\nonumber
   \begin{aligned}
   \bar{Z}^{\kappa}_t=\frac{1}{2}(Z^{1,\kappa}_t+Z^{2,\kappa}_t),\\
   \tilde{Z}^{\kappa}_t=\frac{1}{2}(Z^{1,\kappa}_t-Z^{2,\kappa}_t).
   \end{aligned}
 \end{equation}
 From  \eqref{Sec4:coupled-ODE} and \eqref{Sec4:coupled-ODE1}, $ \bar{Z}^{\kappa}$ and $ \tilde{Z}^{\kappa}$ satisfy the following random differential equations:
 \begin{equation}\label{Sec4:eq 4.12}
   \begin{aligned}
   \dot{\tilde{Z}}_t^\kappa=\frac{1}{2}\left[(Id+\tilde{\psi}(a,W,Z^{1,\kappa}_t)) f(\varphi(t,W,Z^{1,\kappa}_t))-(Id+\tilde{\psi}(a,W,Z^{2,\kappa}_t)) g(\varphi(t,W,Z^{2,\kappa}_t))\right]-2\kappa\tilde{Z}_t^\kappa
   \end{aligned}
 \end{equation}
 and 
  \begin{equation}\label{Sec4:eq 4.12r}
 	\begin{aligned}
 		\dot{\tilde{Z}}_t^\kappa=\frac{1}{2}\left[(Id+\tilde{\psi}(a,\mathbf W,Z^{1,\kappa}_t)) f(\varphi(t,\mathbf W,Z^{1,\kappa}_t))-(Id+\tilde{\psi}(a,\mathbf W,Z^{2,\kappa}_t)) g(\varphi(t,\mathbf W,Z^{2,\kappa}_t))\right]-2\kappa\tilde{Z}_t^\kappa
 	\end{aligned}
 \end{equation}
 with initial data $\frac{1}{2}(Y_a^1-Y_a^2)$. Our goal is to show that $\tilde{Z}_t^\kappa$ converges to $0$ for all $t\in(a,b]$ and almost every given $W\in\Omega$ as $\kappa\rightarrow \infty$.
 \begin{thm}\label{Sec4:Thm 4.1}
   Assume that assumptions \textbf{A1}--\textbf{A2} hold. For any $t\in(a,b]$,  we have 
   \begin{equation}
      \begin{aligned}
      \lim_{\kappa\rightarrow\infty}\|\tilde{Z}_t^\kappa(W)\|=0
      \end{aligned}
   \end{equation}
   holds for almost all $W\in\Omega$.
 \end{thm}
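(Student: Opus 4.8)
The plan is to exploit the fact that $\tilde{Z}^\kappa$ solves a linear ODE with a strong dissipative term $-2\kappa\tilde{Z}^\kappa_t$, whose forcing is bounded uniformly in $\kappa$. Writing equation \eqref{Sec4:eq 4.12} as $\dot{\tilde{Z}}^\kappa_t = G^\kappa_t - 2\kappa\tilde{Z}^\kappa_t$, where
\[
G^\kappa_t := \frac{1}{2}\left[(Id+\tilde{\psi}(a,W,Z^{1,\kappa}_t)) f(\varphi(t,W,Z^{1,\kappa}_t))-(Id+\tilde{\psi}(a,W,Z^{2,\kappa}_t)) g(\varphi(t,W,Z^{2,\kappa}_t))\right],
\]
the first step is to establish a bound $\|G^\kappa_t\| \leq M(W)$ that is independent of $\kappa$ and of $t \in [a,b]$, valid for almost every $W\in\Omega$.

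To obtain such a bound, I would invoke Lemma \ref{Sec: Lemma 4.1}. Since $Y^{i,\kappa}_t = \varphi(t,W,Z^{i,\kappa}_t)$, the estimate \eqref{Sec4:uniform estimate} shows that the trajectories $Y^{i,\kappa}_t$ remain in a fixed bounded set $\{\|y\| \leq R(W)\}$ for all $\kappa$ and all $t \in [a,b]$, where $R(W)$ depends only on $W$, $\|Y_a\|$ and the number of greedy stopping times. On this compact set the continuous functions $f$ and $g$ are bounded, and the Fr\'echet derivative $\frac{\partial \psi}{\partial h}(a,W,\cdot)$ is bounded as well (it is continuous in its base point by the differentiability results used to construct the Doss-Sussmann transformation), hence so is $\tilde{\psi}(a,W,Z^{i,\kappa}_t) = \frac{\partial \psi}{\partial h}(a,W,Y^{i,\kappa}_t)-Id$. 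Combining these bounds yields the desired $\kappa$-uniform control $\|G^\kappa_t\|\leq M(W)$.

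The second step is the variation-of-constants representation. Multiplying by the integrating factor $e^{2\kappa t}$ and integrating from $a$ to $t$ gives
\[
\tilde{Z}^\kappa_t = e^{-2\kappa(t-a)}\,\tilde{Z}^\kappa_a + \int_a^t e^{-2\kappa(t-s)}G^\kappa_s\,ds,
\]
where $\tilde{Z}^\kappa_a = \frac{1}{2}(Y_a^1 - Y_a^2)$ is independent of $\kappa$. Using $\|G^\kappa_s\|\leq M(W)$ and evaluating the elementary integral, I obtain
\[
\|\tilde{Z}^\kappa_t\| \leq e^{-2\kappa(t-a)}\,\|\tilde{Z}^\kappa_a\| + \frac{M(W)}{2\kappa}\left(1-e^{-2\kappa(t-a)}\right) \leq e^{-2\kappa(t-a)}\,\|\tilde{Z}^\kappa_a\| + \frac{M(W)}{2\kappa}.
\]
For any fixed $t \in (a,b]$, both terms on the right tend to $0$ as $\kappa\to\infty$, which proves the claim for almost every $W$; note that the exclusion of $t=a$ is genuine, since there the exponential factor equals $1$ and $\tilde{Z}^\kappa_a$ need not vanish.

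The main obstacle is the first step: securing a forcing bound that does not blow up with $\kappa$. This is precisely where Lemma \ref{Sec: Lemma 4.1} is essential, since the coupling strength $\kappa$ enters $G^\kappa_t$ only implicitly through the trajectories $Z^{i,\kappa}_t$ (and hence $Y^{i,\kappa}_t$); without the $\kappa$-uniform a priori estimate on $\|Y^\kappa_t\|$ one could not rule out the forcing growing with $\kappa$ and defeating the exponential damping. Once that bound is in hand, the dissipative term $-2\kappa\tilde{Z}^\kappa_t$ does all the work and the convergence is immediate.
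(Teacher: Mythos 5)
Your proof is correct, and it reaches the conclusion by a technically different route than the paper, although both arguments hinge on the same key ingredient: the $\kappa$-uniform a priori bound of Lemma \ref{Sec: Lemma 4.1}, which confines the trajectories $Y^{i,\kappa}_t=\varphi(t,W,Z^{i,\kappa}_t)$ to a ball $B(0,R(Y_a,[a,b],W))$ independent of $\kappa$ and $t$, and thereby makes the forcing bounded uniformly in $\kappa$. The paper does not use variation of constants on all of $[a,b]$: it differentiates $\|\tilde Z^\kappa_t\|^2$, applies Young's inequality to trade the cross term for $\frac{1}{4\kappa}\|G^\kappa_t\|^2-2\kappa\|\tilde Z^\kappa_t\|^2$, integrates the resulting differential inequality on each greedy stopping-time interval $[\tau_i,\tau_{i+1}]$, and then iterates across the $N_{\frac{\lambda}{32C_\sigma C},[a,b],p}(W)$ intervals, ending with a bound of order $N/\sqrt{\kappa}$; your Duhamel formula gives the cleaner bound $e^{-2\kappa(t-a)}\|\tilde Z^\kappa_a\|+M(W)/(2\kappa)$, i.e.\ a rate $O(1/\kappa)$ rather than $O(1/\sqrt{\kappa})$, in one stroke and with no stopping times or iteration. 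The reason the paper localizes is that its bound $\|Id+\tilde\psi(a,W,Z^{i,\kappa}_t)\|\le 1+\frac{\lambda}{2}$ comes from the Doss--Sussmann construction (inequality \eqref{SecA:A41}) and is only proved on intervals where $32C_\sigma C\ltn W\rtn_{p-var}\le\lambda$; you replace it by the cruder observation that $h\mapsto\frac{\partial\psi}{\partial h}(a,W,h)$ is bounded on compact sets, which is legitimate — it follows from the global Lipschitz continuity of the derivative flow (Corollary \ref{Sec3:cor3.2} together with Remark \ref{Sec3:Rem 3.2}) — and entirely sufficient here, since only the $\kappa$-independence of the constant matters. One caveat: if one reads the appendix as building $Z$ piecewise, with $\varphi$ and $\tilde\psi$ restarted at each greedy time $\tau_i$ whenever $32C_\sigma C\ltn W\rtn_{p-var,[a,b]}>\lambda$, then equation \eqref{Sec4:eq 4.12} holds with $a$ replaced by $\tau_i$ on $[\tau_i,\tau_{i+1}]$, and your variation-of-constants step should be run on each such interval and concatenated; this costs nothing, since the damping term and the uniform forcing bound are unchanged, but it would recover exactly the interval-by-interval iteration that the paper performs.
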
  
 \begin{proof}
Let  $\left\{\tau_{i}\right\}_{i=0}^{N_{\frac{\lambda}{32C_\sigma C},[a,b],p}(W)}$  be a  sequence of stopping times used in Lemma \ref{Sec: Lemma 4.1}. For any $t\in[\tau_{i},\tau_{i+1}]$ and $i\in\{0,1,\cdots, N_{\frac{\lambda}{32C_\sigma C},[a,b],p}(W)-1\}$,  using equation \eqref{Sec4:eq 4.12} and Young's inequality, we have 
\begin{equation}\nonumber
	\begin{aligned}
	\frac{d}{dt}\|\tilde{Z}^\kappa_t\|^2=&
	\left\<( Id+\tilde{\psi}(a,W,Z^{1,\kappa}_t))f(\varphi(t,W,Z^{1,\kappa}_t))-( Id+\tilde{\psi}(a,W,Z^{2,\kappa}_t))g(\varphi(t,W,Z^{2,\kappa}_t)),\tilde{Z}^\kappa_t\right\>\\
	&-4\kappa\|\tilde{Z}^{\kappa}_t\|^2\\
	\leq& \frac{1}{4\kappa}(\|(Id+\tilde{\psi}(a,W,Z_t^{1,\kappa}))f(\varphi(t,W,Z_t^{1,\kappa}))\|^2+\|(Id+\tilde{\psi}(a,W,Z_t^{2,\kappa}))g(\varphi(t,W,Z_t^{2,\kappa}))\|^2)\\
	&-2\kappa\|\tilde{Z}^{\kappa}_t\|^2.
	\end{aligned}
\end{equation}
Since $ \|Id+\tilde{\psi}(a,W,Z_t^{\kappa})\|\leq 1+\frac{\lambda}{2}$, $Y_t^{j,\kappa}=\varphi(t,W,Z_{t}^{j,\kappa}),j=1,2$, and Lemma \ref{Sec: Lemma 4.1}, we obtain 
\begin{equation}
	\begin{aligned}
		\frac{d}{dt}\|\tilde{Z}^\kappa_t\|^2
		\leq& \frac{1}{4\kappa}(\|(Id+\tilde{\psi}(a,W,Z_t^{1,\kappa}))f(\varphi(t,W,Z_t^{1,\kappa}))\|^2+\|(Id+\tilde{\psi}(a,W,Z_t^{2,\kappa}))g(\varphi(t,W,Z_t^{2,\kappa}))\|^2)\\
		&-2\kappa\|\tilde{Z}^{\kappa}_t\|^2\\
		\leq& \frac{1}{4\kappa}(1+\frac{\lambda}{2})^2\left(\|f\|_{\infty,B(0,R(Y_a,[a,b],W))}+\|g\|_{\infty,B(0,R(Y_a,[a,b],W)}\right)^2-2\kappa\|\tilde{Z}_t^\kappa\|^2,
	\end{aligned}
\end{equation}
where $B(0,R(Y_a,[a,b],W))$ is a ball centered at the origin with radius  $R(Y_a,[a,b],W)$. Note that radius $R(Y_a,[a,b],W)$ is determined by the estimate \eqref{Sec4:uniform estimate}. Let $$C(f,g)=\left(\|f\|_{\infty,B(0,R(Y_a,[a,b],W))}+\|g\|_{\infty,B(0,R(Y_a,[a,b],W)}\right),$$
then 
\begin{equation}\label{Sec4:eq 4.15}
	\begin{aligned}
		\frac{d}{dt}\|\tilde{Z}^\kappa_t\|^2\leq \frac{1}{4\kappa}(1+\frac{\lambda}{2})^2C^2(f,g)-2\kappa\|Z_t^{\kappa}\|^2.
    \end{aligned}
 \end{equation}
Multiplying $e^{2\kappa t}$ on the both sides of  equation \eqref{Sec4:eq 4.15},  we obtain  
\begin{equation}\nonumber
	\begin{aligned}
	  	\frac{d}{dt}[\|\tilde{Z}^\kappa_t\|^2e^{2\kappa t}]\leq \frac{1}{4\kappa}(1+\frac{\lambda}{2})^2C^2(f,g)e^{2\kappa t},~~ t\in[\tau_i,\tau_{i+1}],
	\end{aligned}
\end{equation}
it implies that 
\begin{equation}\label{Sec4:eq 4.16}
	\begin{aligned}
	  \|\tilde{Z}_t^\kappa\|\leq e^{-\kappa(t-\tau_i)}\|\tilde{Z}_{\tau_i}^\kappa\|+\sqrt{\frac{1}{4\kappa}}(1+\frac{\lambda}{2})C(f,g),\quad t\in[\tau_i,\tau_{i+1}].
	\end{aligned}
\end{equation}
By \eqref{Sec4:eq 4.16}, we have 
\begin{equation}\label{Sec4:eq 4.17}
	\begin{aligned}
		\|\tilde{Z}_{\tau_{i+1}}^\kappa\|\leq e^{-\kappa(\tau_{i+1}-\tau_i)}\|\tilde{Z}_{\tau_i}^\kappa\|+\sqrt{\frac{1}{4\kappa}}(1+\frac{\lambda}{2})C(f,g).
	\end{aligned}
\end{equation}
Hence, we can get 
\begin{equation}\label{Sec4:eq 4.18}
	\begin{aligned}
	\|\tilde{Z}_{\tau_{i+1}}^\kappa\|&\leq e^{-\kappa(\tau_{i+1}-\tau_i)}\|\tilde{Z}_{\tau_i}^\kappa\|+\sqrt{\frac{1}{4\kappa}}(1+\frac{\lambda}{2})C(f,g)\\
		&\leq e^{-\kappa(\tau_{i+1}-\tau_i)}\sqrt{\frac{1}{4\kappa}}(1+\frac{\lambda}{2})C(f,g)+\sqrt{\frac{1}{4\kappa}}(1+\frac{\lambda}{2})C(f,g)\\
		&~~~~+e^{-\kappa(\tau_{i+1}-\tau_{i-1})}\|\tilde{Z}_{\tau_{i-1}}^\kappa\|\\
		&\leq (i+1)\sqrt{\frac{1}{4\kappa}}(1+\frac{\lambda}{2})C(f,g)+e^{-\frac{\kappa}{2}(\tau_{i+1}-\tau_{0})}\|\tilde{Z}_{\tau_{0}}^\kappa\|\\
		&\leq N_{\frac{\lambda}{32C_\sigma C},[a,b],p}(W)\sqrt{\frac{1}{4\kappa}}(1+\frac{\lambda}{2})C(f,g)+e^{-\kappa(\tau_{i+1}-\tau_{0})}\|\tilde{Z}_{\tau_{0}}^\kappa\|.
	\end{aligned}
\end{equation}
By \eqref{Sec4:eq 4.16}, for any $ t\in[\tau_i,\tau_{i+1}]$  and $i\in\{0,1,\cdots, N_{\frac{\lambda}{32C_\sigma C}),[a,b],p}(W)-1\}$, we obtain 
\begin{equation}\label{Sec4:eq 4.19}
	\begin{aligned}
		\|\tilde{Z}_t^\kappa\|&\leq e^{-\kappa(t-\tau_0)}\|\tilde{Z}_{\tau_0}^\kappa\|+(N_{\frac{\lambda}{32C_\sigma C},[a,b],p}(W)+1)\sqrt{\frac{1}{4\kappa}}(1+\frac{\lambda}{2})C(f,g)\\
		&= \frac{1}{2}e^{-\kappa(t-a)}\|Y_a^{1}-Y_a^{2}\|+(N_{\frac{\lambda}{32C_\sigma C},[a,b],p}(W)+1)\sqrt{\frac{1}{4\kappa}}(1+\frac{\lambda}{2})C(f,g).
	\end{aligned}
\end{equation}
From \eqref{Sec4:eq 4.19},  we know that $\lim_{\kappa\rightarrow\infty}\|\tilde{Z}_t^\kappa\|=0$,~$t\in(a,b]$.
 \end{proof}  
 Similarly, we have 
 \begin{thm}\label{Sec4:Thm 4.1r}
 	Assume that assumptions \textbf{A1}--\textbf{A2} hold. For any $t\in(a,b]$,  we have 
 	\begin{equation}
 		\begin{aligned}
 			\lim_{\kappa\rightarrow\infty}\|\tilde{Z}_t^\kappa(\mathbf W)\|=0
 		\end{aligned}
 	\end{equation}
 	hold for almost all $W\in\Omega$, where $\mathbf{W}$ is a geometric rough path lifted by $W$.
 \end{thm}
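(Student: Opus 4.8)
The plan is to transcribe the argument of Theorem \ref{Sec4:Thm 4.1} almost verbatim, replacing the driving path $W$ by its lift $\mathbf{W}$ and the Young greedy times by the $p$--variation greedy times \eqref{Sec2:pstopping times1} attached to the rough path. The mechanism is identical: the coupling contributes a dissipation term $-2\kappa\|\tilde{Z}^\kappa_t\|^2$ in \eqref{Sec4:eq 4.12r}, while the drift contributes, after Young's inequality, a source term of order $1/\kappa$. The resulting competition forces $\|\tilde{Z}^\kappa_t\|$ to be of order $1/\sqrt{\kappa}$ on each stopping--time interval, and since the number of such intervals is finite and independent of $\kappa$, the global bound still vanishes as $\kappa\to\infty$.

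Concretely, I would first fix a realization $W\in\Omega$ whose lift $\mathbf{W}$ lies in $\mathscr{C}_g^{\alpha^\prime}(I;\mathbb{R}^d)$, and take the greedy sequence $\{\tau_i\}_{i=0}^{N_{\frac{\lambda}{16C_\sigma C},[a,b],p}(\mathbf{W})}$ from \eqref{Sec2:pstopping times1} with threshold $\frac{\lambda}{16C_\sigma C}$, the very sequence underlying Lemma \ref{Sec: Lemma 4.1r}. On each interval $[\tau_i,\tau_{i+1}]$ I would differentiate $\|\tilde{Z}^\kappa_t\|^2$ along \eqref{Sec4:eq 4.12r} and apply Young's inequality to reach
\[
\frac{d}{dt}\|\tilde{Z}^\kappa_t\|^2
\leq \frac{1}{4\kappa}\Big(\|(Id+\tilde{\psi}(a,\mathbf{W},Z^{1,\kappa}_t))f(\varphi(t,\mathbf{W},Z^{1,\kappa}_t))\|^2
+\|(Id+\tilde{\psi}(a,\mathbf{W},Z^{2,\kappa}_t))g(\varphi(t,\mathbf{W},Z^{2,\kappa}_t))\|^2\Big)
-2\kappa\|\tilde{Z}^\kappa_t\|^2 .
\]
Using the contraction bound $\|Id+\tilde{\psi}(a,\mathbf{W},Z^\kappa_t)\|\leq 1+\frac{\lambda}{2}$ valid on each greedy interval, together with $Y^{j,\kappa}_t=\varphi(t,\mathbf{W},Z^{j,\kappa}_t)$ and the uniform estimate \eqref{Sec4:uniform estimater} of Lemma \ref{Sec: Lemma 4.1r}, the two drift terms are controlled by $\|f\|_{\infty,B(0,R(Y_a,[a,b],\mathbf{W}))}+\|g\|_{\infty,B(0,R(Y_a,[a,b],\mathbf{W}))}$, where the radius $R(Y_a,[a,b],\mathbf{W})$ is fixed by \eqref{Sec4:uniform estimater} and is independent of $\kappa$.

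With the abbreviation $C(f,g)$ for this supremum, I would multiply by the integrating factor $e^{2\kappa t}$ and integrate over $[\tau_i,t]$ to obtain
\[
\|\tilde{Z}^\kappa_t\|\leq e^{-\kappa(t-\tau_i)}\|\tilde{Z}^\kappa_{\tau_i}\|+\sqrt{\tfrac{1}{4\kappa}}\big(1+\tfrac{\lambda}{2}\big)C(f,g),\quad t\in[\tau_i,\tau_{i+1}].
\]
Iterating this recursion across all $N_{\frac{\lambda}{16C_\sigma C},[a,b],p}(\mathbf{W})$ intervals, exactly as in \eqref{Sec4:eq 4.18}--\eqref{Sec4:eq 4.19}, yields
\[
\|\tilde{Z}^\kappa_t\|\leq \frac{1}{2}e^{-\kappa(t-a)}\|Y^1_a-Y^2_a\|+\big(N_{\frac{\lambda}{16C_\sigma C},[a,b],p}(\mathbf{W})+1\big)\sqrt{\tfrac{1}{4\kappa}}\big(1+\tfrac{\lambda}{2}\big)C(f,g).
\]
Since the number of greedy times is finite for the fixed path $\mathbf{W}$, letting $\kappa\to\infty$ sends both terms to $0$ for every $t\in(a,b]$, which is the assertion.

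The step I expect to require the most care is justifying the two structural inputs in the rough setting, not the iteration, which is purely algebraic. First, the uniform--in--$\kappa$ a priori bound \eqref{Sec4:uniform estimater}: this rests on the dissipativity assumption \textbf{A1} surviving the Doss--Sussmann change of variables and on the rough estimate of Theorem \ref{rough est}, and is precisely the content of Lemma \ref{Sec: Lemma 4.1r}. Second, the contraction bound $\|Id+\tilde{\psi}(a,\mathbf{W},Z^\kappa_t)\|\leq 1+\frac{\lambda}{2}$, which requires the Fr\'echet derivatives of the pure rough flow $\varphi(t,\mathbf{W},\cdot)$ and of its backward companion $\psi(a,\mathbf{W},\cdot)$ to stay close to the identity on each greedy interval; this in turn uses the $C_b^3$--regularity in \textbf{A2} and the controlled--rough--path estimate for the linearized equation, and is the genuinely rough ingredient distinguishing this proof from the Young one. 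Once both are in hand, the remainder is an exact copy of the proof of Theorem \ref{Sec4:Thm 4.1}.
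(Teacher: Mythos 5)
Your proposal is correct and is precisely the argument the paper intends: the paper gives no separate proof for this theorem, stating only ``Similarly, we have,'' and your transcription of the proof of Theorem \ref{Sec4:Thm 4.1} --- with $W$ replaced by $\mathbf{W}$, the greedy times \eqref{Sec2:pstopping times1} with threshold $\frac{\lambda}{16C_\sigma C}$, and the a priori bound of Lemma \ref{Sec: Lemma 4.1r} --- is exactly that intended argument. You also correctly identify the two genuinely rough inputs (the uniform estimate \eqref{Sec4:uniform estimater} and the bound $\|Id+\tilde{\psi}(a,\mathbf{W},Z^\kappa_t)\|\leq 1+\frac{\lambda}{2}$ on each greedy interval) as the only places where the rough-path machinery, rather than the Young one, must be invoked.
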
  
 Next, we show  that $Z_t^{1,\kappa}$ and $Z_t^{2,\kappa}$  converge to the same process in the sense of pathwise, and derive the synchronized system.
\begin{thm}\label{Thm:4.2}
	Assume that assumptions \textbf{A1}--\textbf{A2} hold. The system \eqref{Sec1:eq 1.1} can achieve synchronization via the system \eqref{Sec1:equi-eq}, and for almost all $W\in\Omega$,  the corresponding synchronized system is given by
\begin{equation}\label{Sec4:eq 4.26}
	\begin{aligned}
		\bar{Y}_t=\frac{1}{2}(Y_a^1+Y_a^2)+\frac{1}{2}\int_{a}^tf(\bar{Y}_r)+g(\bar{Y}_r)dr+\int_{a}^t\sigma(\bar{Y}_r)dW_r,~~t\in[a,b].
	\end{aligned}
\end{equation}
\end{thm}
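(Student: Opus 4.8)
The plan is to exploit the decomposition $Z^{i,\kappa}_t=\bar Z^\kappa_t+(-1)^{i+1}\tilde Z^\kappa_t$ together with Theorem \ref{Sec4:Thm 4.1}, which already gives $\tilde Z^\kappa_t\to 0$ for $t\in(a,b]$ and almost every $W$. Adding the two equations in \eqref{Sec4:coupled-ODE}, the coupling terms $\kappa(Z^{2,\kappa}_t-Z^{1,\kappa}_t)$ and $\kappa(Z^{1,\kappa}_t-Z^{2,\kappa}_t)$ cancel, so $\bar Z^\kappa$ solves the random ODE
\begin{equation}\nonumber
\dot{\bar Z}^\kappa_t=\tfrac12\left[(Id+\tilde\psi(a,W,Z^{1,\kappa}_t))f(\varphi(t,W,Z^{1,\kappa}_t))+(Id+\tilde\psi(a,W,Z^{2,\kappa}_t))g(\varphi(t,W,Z^{2,\kappa}_t))\right],
\end{equation}
whose right-hand side carries no explicit factor of $\kappa$. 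First I would fix $\lambda\in(0,1)$ and the associated greedy partition as in Lemma \ref{Sec: Lemma 4.1}; on the ball $B(0,R(Y_a,[a,b],W))$ furnished by the uniform estimate \eqref{Sec4:uniform estimate}, the functions $f,g$ are bounded and $\|Id+\tilde\psi(a,W,Z^{i,\kappa}_t)\|\le 1+\lambda/2$, so $\|\dot{\bar Z}^\kappa_t\|$ is bounded by a constant independent of $\kappa$.

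Consequently $\{\bar Z^\kappa\}_\kappa$ is uniformly bounded and equicontinuous on $[a,b]$, and by Arzel\`a--Ascoli some subsequence converges uniformly to a limit $\bar Z\in C([a,b];\mathbb{R}^m)$. Passing to the limit in the integrated form of the equation above---using $\tilde Z^{\kappa}_t\to 0$ so that both $Z^{1,\kappa}_t$ and $Z^{2,\kappa}_t$ converge to $\bar Z_t$, together with the continuity of $\varphi(t,W,\cdot)$, $\tilde\psi(a,W,\cdot)$ and of $f,g$, and dominated convergence---shows that $\bar Z$ satisfies
\begin{equation}\nonumber
\dot{\bar Z}_t=\tfrac12(Id+\tilde\psi(a,W,\bar Z_t))\bigl(f(\varphi(t,W,\bar Z_t))+g(\varphi(t,W,\bar Z_t))\bigr),\qquad \bar Z_a=\tfrac12(Y^1_a+Y^2_a).
\end{equation}
The vector field on the right is locally Lipschitz in $\bar Z$, since $f,g$ are locally Lipschitz (Assumption \textbf{A1}) and the maps $\varphi(t,W,\cdot)$, $\psi(a,W,\cdot)$ and their Fr\'echet derivatives depend locally Lipschitz-continuously on the initial/terminal data under $\sigma\in C_b^3$ (Assumption \textbf{A2}). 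Hence the limiting ODE has a unique solution, so every uniformly convergent subsequence shares the same limit $\bar Z$ and the whole family converges: $\lim_{\kappa\to\infty}Z^{1,\kappa}_t=\lim_{\kappa\to\infty}Z^{2,\kappa}_t=\bar Z_t$ for every $t\in[a,b]$ and almost every $W$.

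Finally I would transport this back through the Doss-Sussmann map. Setting $\bar Y_t:=\varphi(t,W,\bar Z_t)$ and using the continuity of $\varphi(t,W,\cdot)$ yields $Y^{i,\kappa}_t=\varphi(t,W,Z^{i,\kappa}_t)\to\bar Y_t$ for $i=1,2$, which is synchronization in the pathwise sense. To identify $\bar Y$ with the solution of \eqref{Sec4:eq 4.26}, note that $\bar Z$ solves precisely the $Z$-equation attached by the Doss-Sussmann transformation to the averaged drift $\tfrac12(f+g)$. Applying the change-of-variables rule for the Young integral to $\bar Y_t=\varphi(t,W,\bar Z_t)$---where the $t$-differential of $\varphi$ along the pure equation \eqref{Sec1:pure Young} contributes $\sigma(\bar Y_t)dW_t$, while the $\tfrac{\partial\varphi}{\partial y}(t,W,\bar Z_t)\,\dot{\bar Z}_t\,dt$ term reconstructs $\tfrac12(f+g)(\bar Y_t)$ through the identity $\tfrac{\partial\varphi}{\partial y}(t,W,\bar Z_t)\,\tfrac{\partial\psi}{\partial h}(a,W,\varphi(t,W,\bar Z_t))=Id$---shows that $\bar Y$ satisfies \eqref{Sec4:eq 4.26} with $\bar Y_a=\bar Z_a=\tfrac12(Y^1_a+Y^2_a)$.

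I expect the main obstacle to be the joint handling of the compactness argument and the uniqueness of the limiting equation: one must verify, uniformly in $\kappa$, both the a priori bound on $\dot{\bar Z}^\kappa$ (which rests on the uniform estimate of Lemma \ref{Sec: Lemma 4.1} and on $\|Id+\tilde\psi\|\le 1+\lambda/2$) and the local Lipschitz regularity of the limit vector field, the latter hinging on the derivative estimates for $\varphi$ and $\psi$ that follow from the $C_b^3$ hypothesis on $\sigma$. The final identification via the Young chain rule is conceptually the very construction that motivated the coupled system \eqref{Sec1:equi-eq}, and is therefore routine once convergence of $Z^{i,\kappa}$ has been established.
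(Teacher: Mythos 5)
Your proposal is correct, and its skeleton is the same as the paper's: the averaged variable $\bar Z^\kappa=\frac12(Z^{1,\kappa}+Z^{2,\kappa})$ whose equation loses the coupling term, uniform boundedness and equicontinuity drawn from Lemma \ref{Sec: Lemma 4.1} together with $\|Id+\tilde\psi(a,W,\cdot)\|\le 1+\frac{\lambda}{2}$, an Arzel\`a--Ascoli extraction, Theorem \ref{Sec4:Thm 4.1} to force $Z^{1,\kappa}_t$ and $Z^{2,\kappa}_t$ toward the common limit, passage to the limit in the integral relation by dominated convergence, and finally the Doss--Sussmann correspondence $\bar Y_t=\varphi(t,W,\bar Z_t)$, via the identity \eqref{Sec3:1:3.48}, to produce the synchronized equation \eqref{Sec4:eq 4.26}.

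The one point where you genuinely diverge is the upgrade from subsequential convergence to convergence of the whole family, and your version is the more rigorous one. The paper asserts that all subsequential limits coincide ``since Theorem \ref{Sec4:Thm 4.1} holds for the different sequence''; but that theorem only forces $Z^{1,\kappa}_t-Z^{2,\kappa}_t\to 0$, which by itself does not preclude two subsequences of $\bar Z^\kappa$ converging to two distinct diagonal limits. You instead pass to the limit in the integrated equation along \emph{each} convergent subsequence first, note that every limit point solves the same random ODE with locally Lipschitz right-hand side (local Lipschitzness of $f,g$ from \textbf{A1}; Lipschitz dependence of $\varphi(t,W,\cdot)$ and $\frac{\partial\psi}{\partial h}(a,W,\cdot)$ on the data from \textbf{A2}, as in Lemma \ref{Sec3:Lemma 3.2} and Corollary \ref{Sec3:cor3.2}), and invoke uniqueness for that ODE to identify all limit points; the subsequence principle then gives full convergence. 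This is precisely the ingredient the paper leaves implicit, so your route matches the paper while repairing its weakest sentence. One small correction: the convergence $Z^{i,\kappa}_t\to\bar Z_t$ holds only for $t\in(a,b]$, not on all of $[a,b]$ as you state, since $Z^{i,\kappa}_a=Y^i_a$ is fixed; this is harmless, because the dominated-convergence step only needs the integrand to converge for almost every $r\in[a,t]$, and the synchronization assertion itself concerns $t\in(a,b]$.
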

\begin{proof} 
	For almost every given $W\in\Omega$. 
Since $\bar{Z}_t^{\kappa}=\frac{1}{2}\left(Z_t^{1,\kappa}+Z_t^{2,\kappa}\right)$ and equation \eqref{Sec4:coupled-ODE},  $\bar{Z}_t^{\kappa}$ satisfies 
\begin{equation}\label{Sec4:eq 4.21}
	\begin{aligned}
		\dot{\bar{Z}}_t^{\kappa}=\frac{1}{2}\left[(Id+\tilde{\psi}(a,W,Z^{1,\kappa}_t)) f(\varphi(t,W,Z^{1,\kappa}_t))+(Id+\tilde{\psi}(a,W,Z^{2,\kappa}_t)) g(\varphi(t,W,Z^{2,\kappa}_t))\right],
	\end{aligned}
\end{equation}
with initial data $\frac{1}{2}(Y_a^1+Y_a^2)$ . 

We first prove that $\bar{Z}_t^\kappa$ is equicontinuous  in $[a,b]$. For any $\varepsilon>0$, let $\delta(\varepsilon):=\frac{2\varepsilon}{(1+\frac{\lambda}{2})C(f,g)}$, if $0<t-s<\delta(\varepsilon)$, then $\|\bar{Z}_{t}^{\kappa}-\bar{Z}_{s}^{\kappa}\|\leq \varepsilon$. Indeed, if $32C_\sigma C\ltn W\rtn_{p-var;[s,t]}\leq\lambda$, then 
\begin{equation}\nonumber
	\begin{aligned}
		\|\bar{Z}^\kappa_t-\bar{Z}^\kappa_s\|&=\frac{1}{2}\left\|\int_{s}^{t}\left[(Id+\tilde{\psi}(a,W,Z^{1,\kappa}_t)) f(\varphi(r,W,Z^{1,\kappa}_r))+(Id+\tilde{\psi}(a,W,Z^{2,\kappa}_r)) g(\varphi(r,W,Z^{2,\kappa}_r))\right]dr\right\|\\
		&\leq \frac{1}{2}\int_{s}^{t}(1+\frac{\lambda}{2})(\|f(Y_r^1)\|+\|g(Y_r^2)\|)dr\\
		&\leq \frac{1}{2}(1+\frac{\lambda}{2})C(f,g)(t-s)\leq \varepsilon.
	\end{aligned}
\end{equation}
If not, i.e., $32C_\sigma C\ltn W\rtn_{p-var;[s,t]}>\lambda$. Consider a sequence of stopping times $\{\tau_i\}_{i=0}^{\hat{N}}$, where $\tau_0=s,32C_\sigma C\ltn W\rtn_{p-var,[\tau_i,\tau_{i+1}]}=\lambda,i\in\{0,\cdots,\hat{N}\}$, $\tau_{\hat{N}}=t$. Hence,
\begin{equation}\nonumber
	\begin{aligned}
		\|\bar{Z}^\kappa_t-\bar{Z}^\kappa_s\|&=\frac{1}{2}\left\|\int_{s}^{t}\left[(Id+\tilde{\psi}(r,W,Z^{1,\kappa}_r)) f(\varphi(t,W,Z^{1,\kappa}_r))+(Id+\tilde{\psi}(r,W,Z^{2,\kappa}_r)) g(\varphi(r,W,Z^{2,\kappa}_r))\right]dr\right\|\\
		&\leq \frac{1}{2}\sum_{i=0}^{\hat{N}-1}\left\|\int_{\tau_i}^{\tau_{i+1}}\left[(Id+\tilde{\psi}(r,W,Z^{1,\kappa}_r)) f(\varphi(r,W,Z^{1,\kappa}_r))+(Id+\tilde{\psi}(r,W,Z^{2,\kappa}_r)) g(\varphi(r,W,Z^{2,\kappa}_r))\right]dr\right\|\\
		&\leq \frac{1}{2}\int_{s}^{t}(1+\frac{\lambda}{2})(\|f(Y_r^1)\|+\|g(Y_r^2)\|)dr\\
		&\leq \frac{1}{2}(1+\frac{\lambda}{2})C(f,g)(t-s)\leq \varepsilon.
	\end{aligned}
\end{equation}

In addition, for almost every given $W\in\Omega$, $\bar{Z}_t^\kappa$ is uniform bounded on $[a,b]$ with respect to variable $\kappa\geq 0$. In fact, according to \eqref{Sec4: eq 4.10}, we know that 
\begin{equation}\nonumber
 	\begin{aligned}
 	  \|Z_t^\kappa\|\leq \|Y_a\|+\left(N_{\frac{\lambda}{32C_\sigma C},[a,b],p}(W)+1\right)\sqrt{\frac{\bar{C}_\lambda}{\delta_\lambda}}
 	\end{aligned}
 \end{equation}
 holds for each $t\in[a,b]$ and almost every given $W\in\Omega$. In view of $\bar{Z}_t^\kappa=\frac{1}{2}(Z_t^{1,\kappa}+Z_t^{2,\kappa})$, then 
 \begin{equation}\nonumber
 	\begin{aligned}
 	  \|\bar{Z}_t^\kappa\|\leq \|Y_a\|+\left(N_{\frac{\lambda}{32C_\sigma C},[a,b],p}(W)+1\right)\sqrt{\frac{\bar{C}_\lambda}{\delta_\lambda}}.
 	\end{aligned}
 \end{equation}
Then for any given subsequence $\{\bar{Z}^{\kappa_n}\}_{n\in\mathbb{N}}$, using Arzela-Ascoli theorem to induce that there exists a subsequence $\kappa_{n_j}\rightarrow\infty$ such that $\bar{Z}^{\kappa_{n_j}}_t(W)\rightarrow \bar{Z}_t(W), t\in[a,b]$ as $n_j\rightarrow \infty$.   In particular,  for $t\in(a,b]$ and almost every given $W\in\Omega$, we have 
\begin{equation}\label{Sec4:eq 4.22}
	\begin{aligned}
		\bar{Z}^{\kappa_{n_{j}}}_t-Z^{1,\kappa_{n_{j}}}_t=\frac{1}{2}(Z^{2,\kappa_{n_{j}}}_{t}-Z^{1,\kappa_{n_{j}}}_{t})\to0,\\
		\bar{Z}^{\kappa_{n_{j}}}_t-Z^{2,\kappa_{n_{j}}}_t=\frac{1}{2}(Z^{1,\kappa_{n_{j}}}_{t}-Z^{2,\kappa_{n_{j}}}_{t})\to0,
	\end{aligned}
\end{equation}
as $\kappa_{n_j}\rightarrow\infty$. Therefore, the above limits \eqref{Sec4:eq 4.22} means that 
\begin{equation}
	\begin{aligned}
		Z^{1,\kappa_{n_{j}}}_t=2\bar{Z}^{\kappa_{n_{j}}}_t-Z^{2,\kappa_{n_{j}}}_t=\bar{Z}^{\kappa_{n_{j}}}_t+(\bar{Z}^{\kappa_{n_{j}}}_t-Z^{2,\kappa_{n_{j}}}_{t})\to \bar{Z}_t+0= \bar{Z}_t,\\
		Z^{2,\kappa_{n_{j}}}_t=2\bar{Z}^{\kappa_{n_{j}}}_t-Z^{1,\kappa_{n_{j}}}_t=\bar{Z}^{\kappa_{n_{j}}}_t+(\bar{Z}^{\kappa_{n_{j}}}_t-Z^{1,\kappa_{n_{j}}}_{t})\to \bar{Z}_t+0= \bar{Z}_t,
	\end{aligned}
\end{equation}
hold for all $t\in(a,b]$ and almost every given $W\in\Omega$. Since Theorem \ref{Sec4:Thm 4.1} holds for the different sequence $\{\kappa_n\}$ and $t\in(a,b]$, i.e., $(Z_t^{1,\kappa_n},Z_t^{2,\kappa_n})$, and its subsequence $(Z_t^{1,\kappa_{n_j}},Z_t^{2,\kappa_{n_j}})$ has the same limit $(\bar{Z}_t,\bar{Z}_t)$. Therefore,  we  can get $\bar{Z}_t^\kappa\rightarrow \bar{Z}_t$ for all $t\in[a,b]$ and almost every given $W\in\Omega$ as $\kappa\rightarrow\infty$. Furthermore, using integral relation 
\begin{equation}
	\begin{aligned}
		\bar{Z}^{\kappa}_t=&\frac{1}{2}(Y_a^1+Y_a^2)\\
		&+\int_{a}^t\frac{1}{2}\left[(Id+\tilde{\psi}(a,W,Z^{1,\kappa}_r)) f(\varphi(r,W,Z^{1,\kappa}_r))+(Id+\tilde{\psi}(a,W,Z^{2,\kappa}_r)) g(\varphi(r,W,Z^{2,\kappa}_r))\right]dr,
	\end{aligned}
\end{equation}
the Dominated Convergence Theorem, the continuity of $\tilde\psi(a,W,\cdot)$ and $\varphi(t,W,\cdot)$ with respect to the terminal data and the initial data, respectively, we have 
\begin{equation}
	\begin{aligned}
		\bar{Z}_t=&\frac{1}{2}(Y_a^1+Y_a^2)\\
		&+\int_{a}^t\frac{1}{2}\left[(Id+\tilde{\psi}(a,W,\bar Z_r)) f(\varphi(r,W,\bar Z_r))+(Id+\tilde{\psi}(a,W,\bar Z_r)) g(\varphi(r,W,\bar Z_r))\right]dr
	\end{aligned}
\end{equation}
hold for all $t\in[a,b]$ and almost every given $W\in\Omega$. Finally, for almost every given $W\in\Omega$, given the one-to-one correspondence of Doss-Sussmann transformation $\bar{Y}_t=\varphi(t,W,\bar{Z}_t)$, we can get a stochastic differential equation in pathwise sense, i.e.
\begin{equation}\label{Sec4:eq 4.20}
	\begin{aligned}
		\bar{Y}_t=\frac{1}{2}(Y_a^1+Y_a^2)+\frac{1}{2}\int_{a}^tf(\bar{Y}_r)+g(\bar{Y}_r)dr+\int_{a}^t\sigma(\bar{Y}_r)dW_r 
	\end{aligned}
\end{equation}
hold  for almost every given $W\in\Omega$. This means that uncoupled system \eqref{Sec1:eq 1.1}  can achieve synchronization via  a stochastic coupled system  \eqref{Sec1:equi-eq}, which is regarded as the synchronized operation. Finally,  the synchronized system is equation \eqref{Sec4:eq 4.20}.
\end{proof} 
Using the same method, we have 
\begin{thm}\label{Thm:4.2r}
	Assume that assumptions \textbf{A1}--\textbf{A2} hold. The system \eqref{Sec1:eq 1.1r} can achieve synchronization via the system \eqref{Sec1:equi-eqr}, and  for almost all $W\in\Omega$,  the corresponding synchronized system is given by
	\begin{equation}\label{Sec4:eq 4.26r}
		\begin{aligned}
			\bar{Y}_t=\frac{1}{2}(Y_a^1+Y_a^2)+\frac{1}{2}\int_{a}^tf(\bar{Y}_r)+g(\bar{Y}_r)dr+\int_{a}^t\sigma(\bar{Y}_r)d\mathbf W_r,~~t\in[a,b],
		\end{aligned}
	\end{equation}
	where $\mathbf{W}$ is a geometric rough path lifted by $W$.
\end{thm}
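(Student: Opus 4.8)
The plan is to follow the proof of Theorem~\ref{Thm:4.2} line by line, replacing the driving path $W$ by its geometric lift $\mathbf{W}$ and substituting the Young-case inputs by their rough counterparts, namely the uniform bound of Lemma~\ref{Sec: Lemma 4.1r} and the contraction estimate of Theorem~\ref{Sec4:Thm 4.1r}. First I would introduce the average $\bar{Z}^{\kappa}_t=\frac{1}{2}(Z^{1,\kappa}_t+Z^{2,\kappa}_t)$ and observe that in \eqref{Sec4:coupled-ODE1} the coupling terms $\kappa(Z^{2,\kappa}-Z^{1,\kappa})$ and $\kappa(Z^{1,\kappa}-Z^{2,\kappa})$ cancel under averaging, so that $\bar{Z}^{\kappa}$ solves the coupling-free random ODE obtained from \eqref{Sec4:coupled-ODE1} by averaging, i.e.\ the rough analogue of \eqref{Sec4:eq 4.21}, with initial datum $\frac{1}{2}(Y_a^1+Y_a^2)$. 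It is worth emphasising that the $Z$-equations are genuine ordinary differential equations in $t$ whose coefficients merely depend on the fixed rough path $\mathbf{W}$; this is precisely why the compactness argument below transfers unchanged from the Young setting.

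Next I would show that the family $\{\bar{Z}^{\kappa}\}_{\kappa\ge 0}$ is relatively compact in $C([a,b];\mathbb{R}^m)$. Uniform boundedness follows from Lemma~\ref{Sec: Lemma 4.1r}, which confines every trajectory $Y^{j,\kappa}=\varphi(\cdot,\mathbf{W},Z^{j,\kappa})$ to a ball $B(0,R(Y_a,[a,b],\mathbf{W}))$ of radius independent of $\kappa$; together with the bound $\|Id+\tilde{\psi}(a,\mathbf{W},Z^{j,\kappa}_t)\|\le 1+\frac{\lambda}{2}$ and the resulting boundedness of $f,g$ on that ball, the integral form of the averaged ODE gives the $\kappa$-uniform estimate $\|\bar{Z}^{\kappa}_t-\bar{Z}^{\kappa}_s\|\le\frac{1}{2}(1+\frac{\lambda}{2})C(f,g)\,(t-s)$ — first on subintervals where $16C_\sigma C$ times the rough $p$-variation seminorm of $\mathbf{W}$ does not exceed $\lambda$, and then on a general $[s,t]$ by summing over the greedy stopping-time partition associated with $N_{\frac{\lambda}{16C_\sigma C},[a,b],p}(\mathbf{W})$ — which furnishes equicontinuity, exactly as in the Young proof. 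By Arzel\`a--Ascoli, every subsequence of $\{\bar{Z}^{\kappa}\}$ admits a further subsequence $\bar{Z}^{\kappa_{n_j}}\to\bar{Z}$ uniformly on $[a,b]$. Invoking Theorem~\ref{Sec4:Thm 4.1r}, namely $\tilde{Z}^{\kappa}_t=\frac{1}{2}(Z^{1,\kappa}_t-Z^{2,\kappa}_t)\to 0$ for $t\in(a,b]$, I would conclude that $Z^{1,\kappa_{n_j}}_t$ and $Z^{2,\kappa_{n_j}}_t$ share the common limit $\bar{Z}_t$; since this limit is independent of the chosen subsequence, the whole family converges, $\bar{Z}^{\kappa}\to\bar{Z}$. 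Passing to the limit in the integral equation via the Dominated Convergence Theorem, and using the continuity of $\varphi(t,\mathbf{W},\cdot)$ and $\tilde{\psi}(a,\mathbf{W},\cdot)$ in the initial and terminal data (a consequence of the Fr\'echet differentiability theory underlying Theorem~\ref{Sec:thm 3.1r}), shows that $\bar{Z}$ satisfies the limiting random ODE $\dot{\bar{Z}}_t=\frac{1}{2}(Id+\tilde{\psi}(a,\mathbf{W},\bar{Z}_t))(f(\varphi(t,\mathbf{W},\bar{Z}_t))+g(\varphi(t,\mathbf{W},\bar{Z}_t)))$.

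Finally, I would apply the rough Doss--Sussmann transformation $\bar{Y}_t=\varphi(t,\mathbf{W},\bar{Z}_t)$ to convert this limiting ODE into the rough SDE \eqref{Sec4:eq 4.26r}. I expect this last step to be the main obstacle: unlike the Young case, where the chain rule is elementary, in the rough regime one must verify that $\bar{Y}=\varphi(\cdot,\mathbf{W},\bar{Z})$ is itself a controlled rough path with Gubinelli derivative $\sigma(\bar{Y})$ and that the rough integral $\int_a^t\sigma(\bar{Y}_r)\,d\mathbf{W}_r$ reproduces the noise contribution. This relies on the $C_b^3$-regularity of $\sigma$ (Assumption~\textbf{A2}), the Fr\'echet differentiability of the flows $\varphi(t,\mathbf{W},\cdot)$ and $\psi(t,\mathbf{W},\cdot)$, and the one-to-one correspondence between forward and backward flows in the rough Doss--Sussmann framework of \cite{MR4385780}. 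Once these identifications are in place, differentiating $\bar{Y}_t=\varphi(t,\mathbf{W},\bar{Z}_t)$ yields the rough term $\sigma(\bar{Y}_t)\,d\mathbf{W}_t$ from the explicit $t$-dependence of $\varphi$, while the averaged drift $\frac{1}{2}(f(\bar{Y}_t)+g(\bar{Y}_t))$ arises from the cancellation of the Jacobian $\frac{\partial\varphi}{\partial y}(t,\mathbf{W},\bar{Z}_t)$ against the factor $Id+\tilde{\psi}(a,\mathbf{W},\bar{Z}_t)=\frac{\partial\psi}{\partial h}(a,\mathbf{W},\varphi(t,\mathbf{W},\bar{Z}_t))$, giving precisely \eqref{Sec4:eq 4.26r}.
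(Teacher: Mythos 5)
Your proposal is correct and follows essentially the same route as the paper: the paper offers no separate argument for Theorem~\ref{Thm:4.2r}, stating only ``Using the same method'' as in Theorem~\ref{Thm:4.2}, and your plan---averaging out the coupling, the $\kappa$-uniform bound from Lemma~\ref{Sec: Lemma 4.1r}, equicontinuity and Arzel\`a--Ascoli, Theorem~\ref{Sec4:Thm 4.1r} to identify the subsequence-independent limit, dominated convergence, and the rough Doss--Sussmann transformation---is precisely that method. Your explicit flagging of the controlled-rough-path verification in the final step is, if anything, more careful than the paper, which leaves that point implicit.
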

\section{Example}
We consider  a system of two particles evolving  in the  double-well  potential under the influence of  stochastic force.  Specifically, the system is described by  the following stochastic differential equation 
\begin{equation}\label{Sec5:5.1}
		\left\{
	\begin{aligned}
		dY^1_t&= \left(Y^1_t-(Y^1_t)^3\right)dt+sin (Y^1_t)dB^H_t,\\
		dY^2_t&=\left(Y^2_t-(Y^2_t)^3\right)dt+sin(Y^2_t)dB^H_t,\\
	\end{aligned}
	\right.
\end{equation}
with initial data $(Y_0^1,Y_0^2)=(1,3)$ and $t\in[0,1]$, where $B^H$ is a fractional Brownian motion with Hurst index $H=0.7$.   

To achieve the synchronization of  system \eqref{Sec5:5.1},  we introduce  the following coupled system 
\begin{equation}\label{Sec5:5.2}
	\left\{
	\begin{aligned}
		dY^1_t&= \left(Y^1_t-(Y^1_t)^3+\kappa\frac{\partial \varphi(t,B^H,Z^1_t)}{\partial y}(Z^2_t-Z^1_t)\right)dt+sin(Y^1_t)dB^H_t,\\
		dY^2_t&= \left(Y^2_t-(Y^2_t)^3+\kappa\frac{\partial \varphi(t,B^H,Z^2_t)}{\partial y}(Z^1_t-Z^2_t)\right)dt+sin(Y^2_t)dB^H_t,\\
	\end{aligned}
	\right.
\end{equation}
with initial data $(Y_0^1,Y_0^2)=(1,3)$,  where  $Z=(Z^1,Z^2)$ satisfies the auxiliary equation
 \begin{equation}\label{Sec5:5.3}
 	\left\{
 	\begin{aligned}
 		\dot{Z}^1_t&= \frac{\partial \psi}{\partial h}(0,B^H,\varphi(t,B^H,Z^1_t)) (\varphi(t,B^H,Z_t^1)-\varphi^3(t,B^H,Z_t^1))+\kappa(Z^2_t-Z^1_t),\\
 		\dot{Z}^2_t&= \frac{\partial \psi}{\partial h}(0,B^H,\varphi(t,B^H,Z^2_t)) (\varphi(t,B^H,Z_t^2)-\varphi^3(t,B^H,Z_t^2))+\kappa(Z^1_t-Z^2_t),
 	\end{aligned}
 	\right.
 \end{equation}
 with initial data $Z_0=(1,3)$ and $t\in[0,1]$. For $t\in[0,1]$.  Let  $\varphi(r,B^H,Z_t^j),j=1,2,r\in[0,t],$ represent the solution maps of the pure Young differential equations  
  \begin{equation}\label{Sec5:5.4}
 	\left\{
 	\begin{aligned}
 	dy_r^j&= sin(y_r^j)dB^H_r, ~~r\in[0,t],\\
 		y_0^j&=Z_t^j,
 	\end{aligned}
 	\right.
 \end{equation}
and    $\frac{\partial \varphi(r,B^H,Z^j_t)}{\partial y},j=1,2, r\in[0,t],$ are the Fr\'{e}chet derivative of  $y_t$ with respect to  initial data, it satisfies the following equations
 \begin{equation}\label{Sec5:5.5}
	\left\{
	\begin{aligned}
		d\xi_r^j&= cos(y_r^j)\xi_r^jdB^H_r,~~r\in[0,t],\\
		\xi_0^j&=1.
	\end{aligned}
	\right.
\end{equation}
Similarly, $\psi(r,B^H,\varphi(t,B^H,Z_t^j)),j=1,2,r\in[0,t]$, are the solution maps of backward Young differential equations
 \begin{equation}\label{Sec5:5.6}
	\left\{
	\begin{aligned}
		dh_r^j&= sin(h_r^j)dB^H_r,~~r\in[0,t],\\
		h_t^j&=\varphi(t,B^H,Z_t^j),
	\end{aligned}
	\right.
\end{equation}
and  $\frac{\partial \psi(r,B^H,\varphi(t,B^H,Z_t^j))}{\partial h},j=1,2,r\in[0,t],$ are the  Fr\'{e}chet derivative of $h_t$ with  terminal data, it satisfies the following equations
 \begin{equation}\label{Sec5:5.7}
 	\left\{
 	\begin{aligned}
 		d\rho_r^j&= cos(h_r^j)\rho_r^jdB^H_r,~~r\in[0,t],\\
 		\rho_t^j&=1.
 	\end{aligned}
 	\right.
 \end{equation}
 We note that the functions $f(x)=g(x)=x-x^3$, it satisfies the assumption \textbf{A1} as discussed in Ref. \cite{MR4385780}. Additionally, $sin(x)$ is also satisfies the assumption \textbf{A2}.  Therefore, by Theorem \ref{Thm:4.2},  the system  \eqref{Sec5:5.1} can  achieve synchronization through the  coupled form  \eqref{Sec5:5.2}  as $\kappa\rightarrow \infty$.  The synchronized equation is given by 
 \begin{equation}\label{Sec5:5.8}
     \begin{aligned}
         dY_t=(Y_t-Y_t^3)dt+sin(Y_t)dB^H_t
     \end{aligned}
 \end{equation}
 
 To validate our theoretical results, we perform numerical simulations using MATLAB. Figure \ref{fig:23} displays the trajectories of  $Y^1_t,Y^2_t$ and $Y_t$   for   $\kappa=0,10,100,100$.
 \begin{figure}[H]
 	\centering
 	\includegraphics[width=\textwidth]{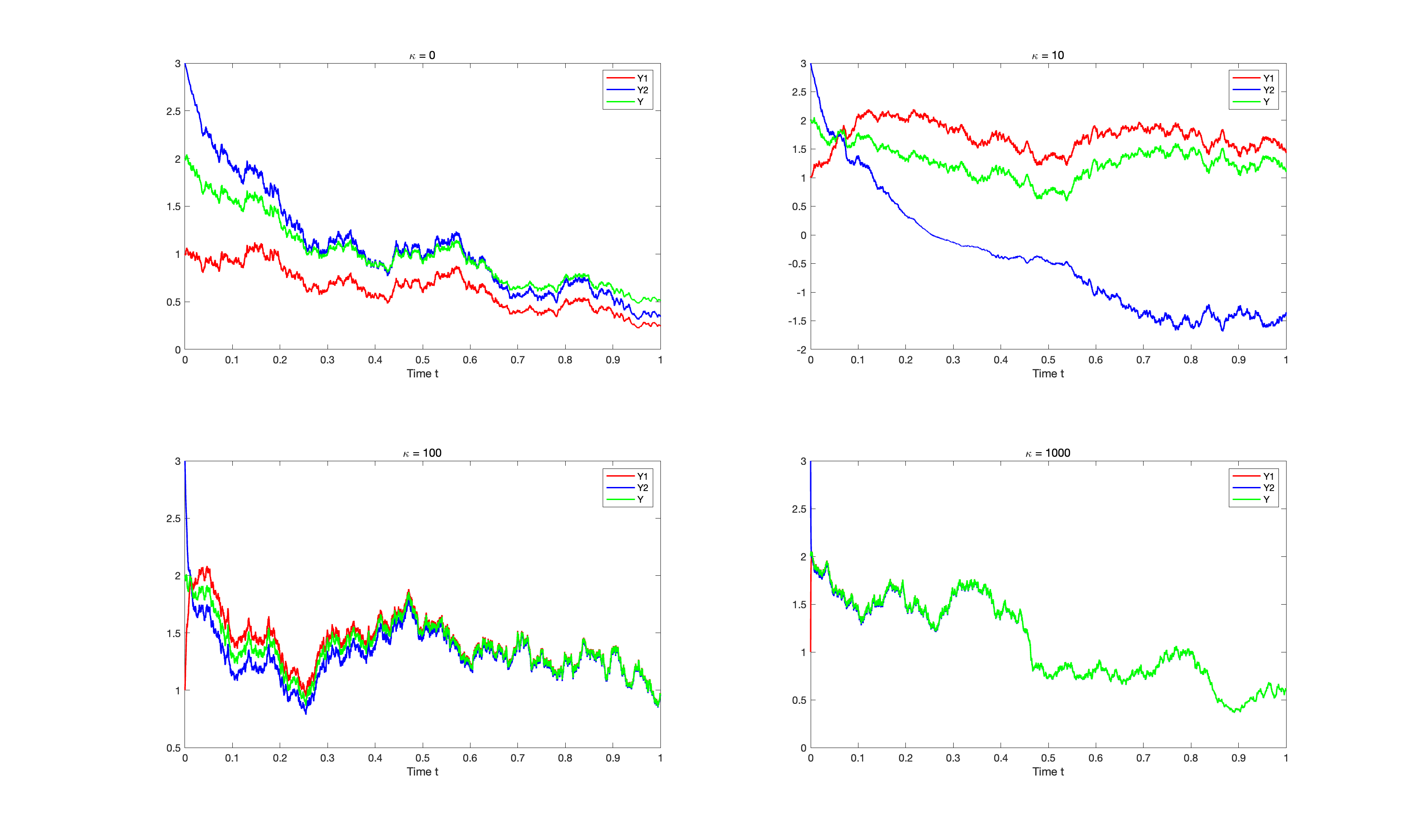}
 	\caption{Trajectories of $Y^1_t$,$Y^2_t$,$Y_t$ for $\kappa=0,10,100,1000$.}
 	\label{fig:23}
 \end{figure}
Figure \ref{fig:23} demonstrates that as the coupling parameter 
$\kappa$ increases, the trajectories of $Y^1_t$ and $Y^2_t$ converge to the trajectory of  $Y_t$ on the interval $(0,1]$. This  phenomenon means that  the system  \eqref{Sec5:5.1}  via the  coupled form  \eqref{Sec5:5.2} to  almost achieve synchronization on $(0,1]$ as $\kappa=100$, it reflects the state of components $Y^1_t$ and $Y^2_t$ can be effectively tracked by the state of $Y$ for sufficiently large $\kappa$. 

For the systems \eqref{Sec5:5.1}-\eqref{Sec5:5.8}, we can also replace fractional Brownian motion $B^H$ by  the geometric fractional Brownian rough path $\mathbf{W}$ with Hurst index $H=0.4$.  The trajectories of  $Y^1_t,Y^2_t$ and $Y_t$   for   $\kappa=0,10,100,100$ as follows:
 \begin{figure}[H]
	\centering
	\includegraphics[width=\textwidth]{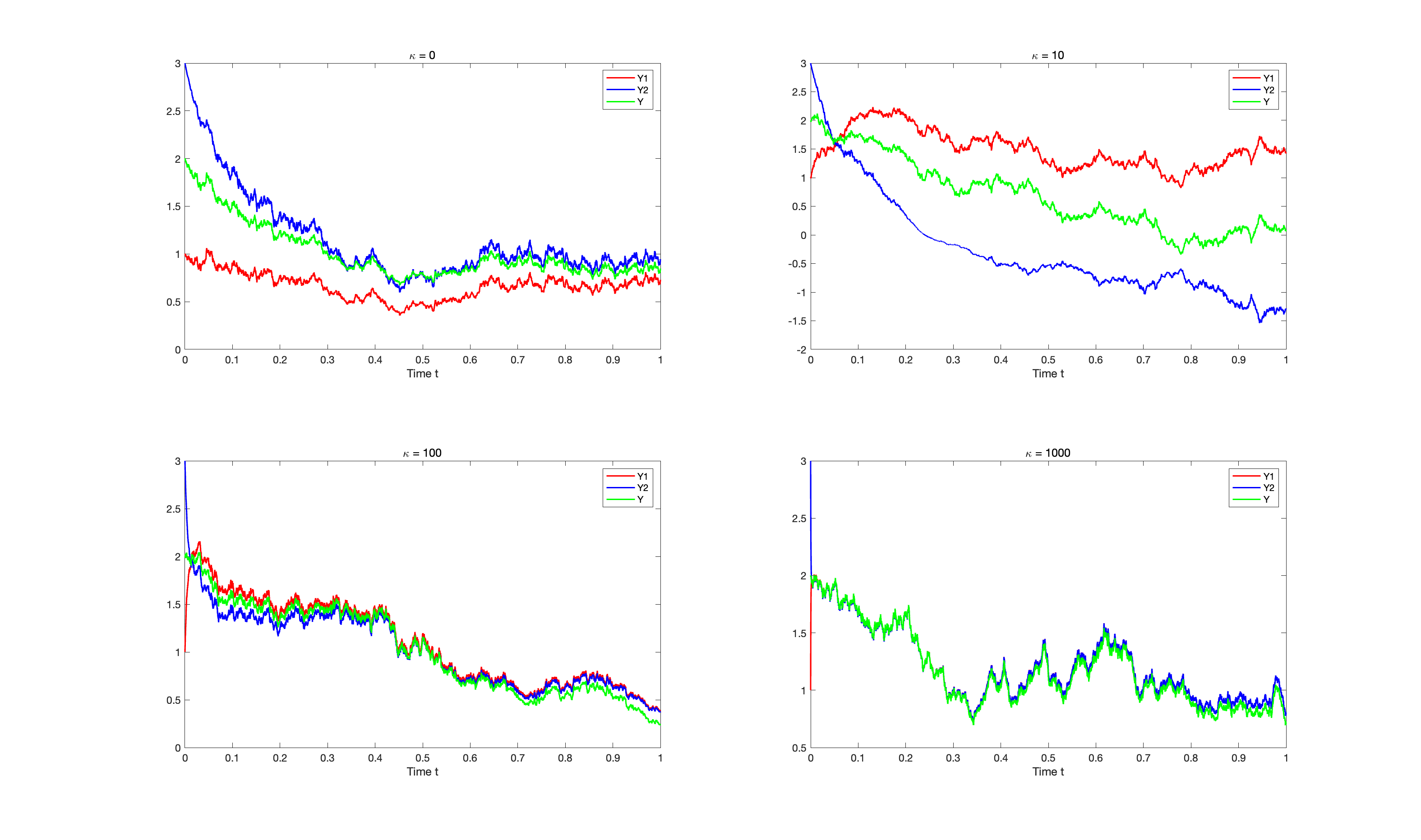}
	\caption{Trajectories of $Y^1_t$,$Y^2_t$,$Y_t$ for $\kappa=0,10,100,1000$.}
	\label{fig:24}
\end{figure}
Therefore, the figure  \eqref{fig:24} also shows the synchronized phenomenon. It reflects that the controller, we designed, is effective to realize synchronization for rough case.

In conclusion, both theoretical analysis and numerical simulations confirm that the system \eqref{Sec5:5.1} achieves synchronization through the coupled form \eqref{Sec5:5.2} as  $\kappa\rightarrow \infty$,  with the synchronized system described by \eqref{Sec5:5.8}, so is rough case.

\appendix
\section{The proof of Theorem \ref{Sec:thm 3.1}}\label{Appendix A}
Before giving the proof of Theorem \ref{Sec:thm 3.1},  we first give the well-posedness of the pure Young differential equation \eqref{Sec1:pure Young}.
\begin{thm}\label{Sec3:Thm3.1}
Let $I = [a, b] \subset \mathbb{R}$. If assumption \textbf{A2} holds, then the pure Young differential equation \eqref{Sec1:pure Young} admits a unique solution in  $C^{\alpha}(I; \mathbb{R}^m)$ for almost every $W \in \Omega$.
\end{thm}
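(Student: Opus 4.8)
The plan is to solve \eqref{Sec1:pure Young} by a local Banach fixed-point argument on short time intervals and then concatenate finitely many local solutions. Throughout I fix an $\alpha'$-Hölder trajectory $W\in C^{\alpha'}(I;\mathbb{R}^d)$ (which covers almost every $W$) and interpret the Young integral in the Hölder sense. Since $1/2<\alpha<\alpha'$, one has $\alpha+\alpha'>1$, so for any $y\in C^\alpha$ the integrand $\sigma(y_\cdot)$ lies in $C^\alpha$ (because $D\sigma$ is bounded by \textbf{A2}) and the integral $\int_a^\cdot\sigma(y_r)\,dW_r$ is well defined, with the Young–Loève bound of Remark~\ref{Sec2:Young est-H} at our disposal.

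First I would set up the solution map $\mathcal{M}(y)_t:=y_a+\int_a^t\sigma(y_r)\,dW_r$ and show it is a contraction on a ball of $C^\alpha([s_0,s_1];\mathbb{R}^m)$ for a sufficiently short interval $[s_0,s_1]$. The two ingredients are an invariance estimate and a difference estimate. For invariance, applying Remark~\ref{Sec2:Young est-H} to $(\mathcal{M}y)_{s,t}=\int_s^t\sigma(y_r)\,dW_r$ and dividing by $(t-s)^\alpha$ yields a bound of the form
$$\ltn \mathcal{M}y\rtn_{\alpha,[s_0,s_1]}\le C_\sigma\ltn W\rtn_{\alpha',[s_0,s_1]}\big((s_1-s_0)^{\alpha'-\alpha}+C\,\ltn y\rtn_{\alpha,[s_0,s_1]}(s_1-s_0)^{\alpha'}\big),$$
in which every constant depends only on $C_\sigma$ from \textbf{A2}; crucially $\sigma$ is bounded, so the leading term $\sigma(y_s)W_{s,t}$ is controlled by $\|\sigma\|_\infty$ alone and the whole estimate is independent of the size of $y_a$. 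For the contraction, estimating $\ltn\sigma(y)-\sigma(\tilde y)\rtn_\alpha$ through the mean value theorem and the boundedness of $D\sigma,D^2\sigma$ gives $\ltn\mathcal{M}y-\mathcal{M}\tilde y\rtn_{\alpha}\le L(s_1-s_0)\,\ltn y-\tilde y\rtn_{\alpha}$ with $L(s_1-s_0)\to0$ as the interval shrinks. Hence $\mathcal{M}$ has a unique fixed point on each sufficiently short interval, which is the local solution.

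The key point is that the admissible interval length depends only on $C_\sigma$ and on the local Hölder norm of $W$, not on the solution itself. I would therefore split $[a,b]$ using the greedy Hölder stopping times $\{\tau_i(\gamma,I,\alpha)\}$ of \eqref{Holder stop}, for which $\ltn W\rtn_{\alpha,[\tau_i,\tau_{i+1}]}<\gamma$ and $(\tau_{i+1}-\tau_i)^\alpha<\gamma$; choosing $\gamma$ small makes $\mathcal{M}$ a contraction on every $[\tau_i,\tau_{i+1}]$ simultaneously. Because $W\in C^{\alpha'}$ the number $N_{\gamma,I,\alpha}(W)$ of such intervals is finite, so I obtain the solution on all of $[a,b]$ by solving successively on $[\tau_i,\tau_{i+1}]$ with the terminal value of the previous step as the new initial value, and then checking that the concatenated path is genuinely $\alpha$-Hölder on $I$, which uses the additivity/control property recorded in Lemma~\ref{Sec2: equvalence Control}. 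Global uniqueness follows from the local uniqueness on each subinterval.

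The main obstacle is precisely the passage from local to global: for a generic Young equation the life-span of the fixed-point solution could shrink as the solution grows, risking finite-time blow-up. Here this is averted by the boundedness of $\sigma$ in \textbf{A2}, which decouples the contraction radius from the size of $y$ and guarantees that the finitely many greedy intervals $N_{\gamma,I,\alpha}(W)+1$ suffice to cover $[a,b]$. I would also note that, for this pure existence-and-uniqueness statement, $C_b^2$-regularity of $\sigma$ already suffices; the stronger $C_b^3$ hypothesis of \textbf{A2} is needed only later, for the Fréchet differentiability of the solution operator $\varphi(t,W,\cdot)$.
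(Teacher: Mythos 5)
Your proposal is correct and follows essentially the same route as the paper's own proof: a Banach fixed-point argument for the map $\mathcal{M}(y)_t = y_a + \int_a^t \sigma(y_r)\,dW_r$ on the greedy H\"older stopping-time intervals of \eqref{Holder stop}, with invariance and contraction estimates coming from the Young--Lo\`eve bound of Remark~\ref{Sec2:Young est-H} and the boundedness of $\sigma$ and its derivatives, followed by concatenation over the finitely many intervals. Even your closing observation that $C_b^2$ suffices here and $C_b^3$ is only needed for the differentiability of the flow matches the remark the paper makes after Theorem~\ref{Sec3:thm3.2}.
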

\begin{proof}
We establish the global well-posedness of the solution to \eqref{Sec1:pure Young} by applying the Banach fixed-point theorem in conjunction with a concatenation technique. Specifically, we first prove the well-posedness on each stopping time interval determined by \eqref{Holder stop} via the Banach fixed-point theorem, and then concatenate these local solutions to obtain well-posedness over the entire time interval.
	
For almost every given $W\in\Omega$.	Let $\gamma=\frac{1}{2
		\tilde{C}_{\sigma}}$ in \eqref{Holder stop}, where $\tilde{C}_{\sigma}:=2CC_\sigma \vee 1$, and define the following  space
	$$S:=\left\{y\in C^{\alpha}([a,\tau_1];\mathbb{R}^m): \text{with initial data~}y_a,~\text{and}~\ltn y\rtn_{\alpha,[a,\tau_1]}\leq 1\right\},$$
	it is not difficult to prove that $S$ is a Banach space under the semi-norm $\ltn \cdot\rtn_{\alpha,[a,\tau_{1}]}$. One defines the map $\mathcal{M}$ by
	\begin{equation}
		\mathcal{M}(y)(t):=y_a+\int_{a}^{t}\sigma(y_s)dW_s,\quad t\in [a,\tau_1].
	\end{equation}
	\textbf{\textit{Invariance:}} For any $y\in S$ and $s<r\in[a,\tau_1]$, the estimate  in  Remark \ref{Sec2:Young est-H} yields
	\begin{equation}
		\begin{aligned}
			\| \mathcal{M}(y)_{s,r}\|&=\left\|\int_{s}^{r}\sigma(y_{s^\prime})dW_{s^\prime}\right \|\leq \|\sigma(y_s)W_{s,r}\|+C\ltn\sigma(y)\rtn_{\alpha,[s,r]}\ltn W \rtn_{\alpha,[s,r]}(r-s)^{2\alpha}\\
			&\leq \|\sigma\|_{\infty}\ltn W\rtn_{\alpha,[s,r]}(r-s)^{\alpha}+C\ltn\sigma(y)\rtn_{\alpha,[s,r]}\ltn W \rtn_{\alpha,[s,r]}(r-s)^{2\alpha}.
		\end{aligned}		
	\end{equation}
	Note that $\|\sigma\|_{\infty}=\|\sigma(y)\|_{\infty}\leq C_\sigma$, and
	\begin{equation}
		\begin{aligned}
			\|\sigma(y_t)-\sigma(y_s)\|	&\leq C_\sigma\|y_t-y_s\|, \quad s,t\in I,
		\end{aligned}
	\end{equation}
	induces that
	$$\ltn \sigma(y)\rtn_{\alpha,[s,r]}\leq C_\sigma\ltn y\rtn_{\alpha,[s,r]}.$$
	Hence,
	$$\ltn \mathcal{M}(y)\rtn_{\alpha,[a,\tau_1]}\leq C_\sigma\ltn W\rtn_{\alpha,[a,\tau_1]}+CC_\sigma\ltn y\rtn_{\alpha,[a,\tau_1]}\ltn W\rtn_{\alpha,[a,\tau_1]}(\tau_1-a)^\alpha,$$
	then   we get that
	$$\ltn\mathcal{M}(Y)\rtn_{\alpha,[\tau_0,\tau_1]}\leq 1,$$
	it implies the invariance of the mapping $\mathcal{M}$.
	
	\textbf{\textit{Contraction:}} For any $y\in S$ and $\tilde{y}\in S$,
	\begin{equation}
		\begin{aligned}
			\mathcal{M}(y)(t)-\mathcal{M}(\tilde{y})(t)	=\int_{a}^{t}\sigma(y_{s^\prime})-\sigma(\tilde{y}_{s^\prime})dW_{s^\prime}, \quad t\in[a,\tau_1].
		\end{aligned}
	\end{equation}
	For any $s<r\in[a,\tau_1]$, by Remark \ref{Sec2:Young est-H} we have
	\begin{equation}\label{Sec3:3.14}
		\begin{aligned}
			&\|\mathcal{M}(y)(r)-\mathcal{M}(\tilde{y})(r)-\mathcal{M}(y)(s)+\mathcal{M}(\tilde{y})(s)\|=\left\|\int_{s}^{r}\sigma(y_{s^\prime})-\sigma(\tilde{y}_{s^\prime})dW_{s^\prime}\right\| \\
			&\leq \|\sigma(y_s)-\sigma(\tilde{y}_s)\|\|W_{s,r}\|+C\ltn \sigma(y)-\sigma(\tilde{y})\rtn_{\alpha,[s,r]}\ltn W\rtn_{\alpha,[s,r]}(r-s)^{2\alpha}\\
			&\leq \|\sigma(y_s)-\sigma(\tilde{y}_s)\|\ltn W\rtn_{\alpha,[a,\tau_1]}(r-s)^{\alpha}+C\ltn \sigma(y)-\sigma(\tilde{y})\rtn_{\alpha,[a,\tau_1]}\ltn W\rtn_{\alpha,[a,\tau_1]}(r-s)^{2\alpha}.
		\end{aligned}
	\end{equation}
	Note that for any $s\in[a,\tau_1]$, we have
	\begin{equation}
		\begin{aligned}
			\|\sigma(y_s)-\sigma(\tilde{y}_s)\|\leq C_{\sigma}\|y_s-\tilde{y}_s\|\leq C_\sigma\ltn y-\bar{y}\rtn_{\alpha,[a,\tau_1]}(\tau_1-a)^\alpha,
		\end{aligned}
	\end{equation}
	and for any $s_1>s_2\in[a,\tau_1]$,  by applying the mean value theorem twice and assumption \textbf{A2}, we obtain
	\begin{equation}
		\begin{aligned}
			&\|\sigma(y_{s_1})-\sigma(\tilde{y}_{s_1})-\sigma(y_{s_2})+\sigma(\tilde{y}_{s_2})\|\\
			&\leq C_{\sigma}\ltn y-\tilde{y}\rtn_{\alpha,[a,\tau_1]}(s_1-s_2)^{\alpha}\\
			&~~~+C_{\sigma}\left(\ltn y\rtn_{\alpha,[a,\tau_1]}+\ltn \tilde{y}\rtn_{\alpha,[a,\tau_1]}\right)\ltn y-\tilde{y}\rtn_{\alpha,
				[a,\tau_1]}(s_1-s_2)^{\alpha}(\tau_1-a)^{\alpha},
		\end{aligned}
	\end{equation}
	this  inequality shows that
	\begin{equation}\label{Sec3:3.17}
		\begin{aligned}
			\ltn \sigma(y)-\sigma(\tilde{y})\rtn_{\alpha,[a,\tau_1]}\leq C_\sigma \ltn y-\tilde{y} \rtn_{\alpha,[a,\tau_1]}+2C_{\sigma}\ltn y-\tilde{y}\rtn_{\alpha,[a,\tau_1]}(\tau_1-a)^{\alpha}.
		\end{aligned}
	\end{equation}
	In view of \eqref{Sec3:3.14}-\eqref{Sec3:3.17}, we know that
	\begin{equation}
		\begin{aligned}
			\ltn \mathcal{M}(Y)-\mathcal{M}(\tilde{Y})\rtn_{\alpha,[a,\tau_1]}&\leq 2CC_\sigma\ltn y-\tilde{y} \rtn_{\alpha,[a,\tau_1]}(\ltn W\rtn_{\alpha,[a,\tau_1]}+(\tau_1-a)^{\alpha})\\
			&\leq \frac{1}{2}\ltn y-\tilde{y} \rtn_{\alpha,[a,\tau_1]},
		\end{aligned}
	\end{equation}
	it shows that the mapping $\mathcal{M}$ is contractive.  So the Banach fixed-point theorem can be applied, the 	pure Young differential equation \eqref{Sec1:pure Young} has a unique solution on $C^{\alpha}([\tau_0,\tau_1];\mathbb{R}^m)$ for almost every  fixed $W\in\Omega$. Finally, by repeatedly use Banach fixed-point theorem on each stopping times interval $[\tau_{i},\tau_{i+1}],i=1,\cdots, N_{\gamma,I,\alpha}(x)-1$, we obtain the global solution in $C^{\alpha}(I;\mathbb{R}^m)$.
\end{proof}
\begin{lem}\label{Sec2:lemma-bounded}
	Let $y_t\in C^{\alpha}([a,b];\mathbb{R}^m)$ be the solution to the Young differential equation \eqref{Sec1:pure Young}, and suppose assumption \textbf{A2} holds. Then $\ltn y\rtn_{p-var,[a,b]}\leq N_{\gamma_1,[a,b],p}(W)$ holds for almost every $W\in\Omega$, where $\gamma_1=\frac{1}{4C_{\sigma}C}$.
\end{lem}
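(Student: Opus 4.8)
The plan is to establish a self-improving a priori bound for $\ltn y\rtn_{p-var}$ on each greedy interval cut out by the $p$-variation of $W$, and then to concatenate these local bounds through Lemma \ref{Sec2: equvalence Control}. Throughout I set $p=\frac1\alpha\in(1,2)$; since $\alpha>\frac12$, both $y\in C^\alpha$ and $W\in C^\alpha$ lie in $C^{p-var}(I;\cdot)$ and satisfy $\frac1p+\frac1p=2\alpha>1$, so that the Young integral and the Young--Loève estimate \eqref{Sec2:p-Youngbounded} are available for $\int\sigma(y)\,dW$. I also record that $\ltn y\rtn_{p-var,[a,b]}<\infty$ a priori, because an $\alpha$-Hölder path has finite $p$-variation on a compact interval; this finiteness is exactly what will allow an absorption step in the key estimate.

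First I would fix the greedy sequence $\{\tau_i\}$ of \eqref{Sec2:pstopping times} associated with $W$ at level $\gamma_1=\frac{1}{4C_\sigma C}$, so that $\ltn W\rtn_{p-var,[\tau_i,\tau_{i+1}]}\le\gamma_1$ for $i=0,\dots,N_{\gamma_1,[a,b],p}(W)-1$. On a fixed interval $[\tau_i,\tau_{i+1}]$, using $y_{u,v}=\int_u^v\sigma(y_r)\,dW_r$ and \eqref{Sec2:p-Youngbounded}, together with $\|\sigma\|_\infty\le C_\sigma$, $\|W_{u,v}\|\le\ltn W\rtn_{p-var,[u,v]}$, and the Lipschitz bound $\ltn\sigma(y)\rtn_{p-var,[u,v]}\le C_\sigma\ltn y\rtn_{p-var,[u,v]}$ furnished by assumption \textbf{A2}, I obtain for every subinterval $[u,v]\subseteq[\tau_i,\tau_{i+1}]$ that
\begin{equation}\nonumber
\|y_{u,v}\|\le C_\sigma\ltn W\rtn_{p-var,[u,v]}+CC_\sigma\ltn W\rtn_{p-var,[u,v]}\ltn y\rtn_{p-var,[u,v]}.
\end{equation}
Raising this to the power $p$, summing over a partition of $[\tau_i,\tau_{i+1}]$, and applying Minkowski's inequality in $\ell^p$ splits the right-hand side into two sums. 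For the linear term, super-additivity of the control $\ltn W\rtn_{p-var}^p$ (the first inequality of Lemma \ref{Sec2: equvalence Control}) gives a bound $C_\sigma\ltn W\rtn_{p-var,[\tau_i,\tau_{i+1}]}$; for the nonlinear term I first pull out the uniform factor $\ltn W\rtn_{p-var,[u,v]}\le\gamma_1$ and then use super-additivity of $\ltn y\rtn_{p-var}^p$, yielding $CC_\sigma\gamma_1\ltn y\rtn_{p-var,[\tau_i,\tau_{i+1}]}$. Taking the supremum over partitions gives
\begin{equation}\nonumber
\ltn y\rtn_{p-var,[\tau_i,\tau_{i+1}]}\le C_\sigma\ltn W\rtn_{p-var,[\tau_i,\tau_{i+1}]}+CC_\sigma\gamma_1\ltn y\rtn_{p-var,[\tau_i,\tau_{i+1}]}.
\end{equation}
The choice $\gamma_1=\frac{1}{4C_\sigma C}$ makes $CC_\sigma\gamma_1=\frac14$, so the last term absorbs into the left-hand side (legitimate since the seminorm is finite), leaving $\ltn y\rtn_{p-var,[\tau_i,\tau_{i+1}]}\le\frac43 C_\sigma\ltn W\rtn_{p-var,[\tau_i,\tau_{i+1}]}\le\frac43 C_\sigma\gamma_1=\frac{1}{3C}<1$, where I used $C>1$.

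Finally I would concatenate. With $N:=N_{\gamma_1,[a,b],p}(W)$ intervals, each contributing $\ltn y\rtn_{p-var,[\tau_i,\tau_{i+1}]}^p\le(\tfrac{1}{3C})^p$, the second inequality of Lemma \ref{Sec2: equvalence Control} gives $\ltn y\rtn_{p-var,[a,b]}^p\le N^{p-1}\cdot N\cdot(\tfrac{1}{3C})^p=(\tfrac{N}{3C})^p$, hence $\ltn y\rtn_{p-var,[a,b]}\le\frac{N}{3C}\le N$, as claimed. The main obstacle is the nonlinear remainder in the Young--Loève estimate: it is the product $\ltn W\rtn_{p-var,[u,v]}\ltn y\rtn_{p-var,[u,v]}$ that must be summed into a genuine $p$-variation bound rather than controlling a single increment, and the argument hinges on (i) summing it correctly via Minkowski together with super-additivity of the two controls and (ii) calibrating the greedy threshold $\gamma_1$ so that the resulting coefficient $CC_\sigma\gamma_1$ is strictly below one, enabling the absorption that converts the estimate into a bound. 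The a priori finiteness of $\ltn y\rtn_{p-var}$ inherited from $y\in C^\alpha$ is precisely what makes this absorption rigorous.
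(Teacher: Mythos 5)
Your proof is correct and follows essentially the same route as the paper's: the Young--Lo\`eve increment bound $\|y_{u,v}\|\le C_\sigma\ltn W\rtn_{p\text{-}var,[u,v]}+CC_\sigma\ltn W\rtn_{p\text{-}var,[u,v]}\ltn y\rtn_{p\text{-}var,[u,v]}$, lifted to the $p$-variation seminorm by super-additivity, then absorption on each greedy interval (where $\gamma_1=\frac{1}{4C_\sigma C}$ makes the coefficient strictly less than one) and concatenation via Lemma \ref{Sec2: equvalence Control}. The only differences are cosmetic refinements: you use Minkowski's inequality where the paper uses a cruder splitting (hence the paper's factor $2$ and per-interval bound $1/C$ versus your $1/(3C)$), and you make explicit the a priori finiteness of $\ltn y\rtn_{p\text{-}var}$ that legitimizes the absorption, which the paper leaves implicit.
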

\begin{proof}
	For any $s<t\in[a,b]$ and almost every given $W\in\Omega$, we note that
	\begin{equation}
		\begin{aligned}
			y_{s,t}=\int_{s}^{t}\sigma(y_r)dW_r.
		\end{aligned}
	\end{equation}
	By  using the Young bound \eqref{Sec2:p-Youngbounded} and assumption \textbf{A2}, we obtain
	\begin{equation}\nonumber
		\begin{aligned}
			\|y_{s,t}\|&=\left\|\int_{s}^{t}\sigma(y_r)dW_r\right\|\\
			&\leq \left\|\sigma(y_s)W_{s,t}\right\|+C\ltn \sigma(y)\rtn_{p-var,[s,t]}\ltn W\rtn_{p-var,[s,t]}\\
			&\leq C_\sigma\ltn W\rtn_{p-var,[s,t]}+ C\biggr(\sup_{\mathcal{P}([s,t])}\sum_{[u,v]\in\mathcal{P}(s,t)}(\|\sigma(y_v)-\sigma(y_u)\|)^{p}\biggr)^{\frac{1}{p}}\ltn W\rtn_{p-var,[s,t]}\\
			&\leq  C_\sigma\ltn W\rtn_{p-var,[s,t]}+CC_\sigma\biggr(\sup_{\mathcal{P}([s,t])}\sum_{[u,v]\in\mathcal{P}(s,t)}\|y_v-y_u\|^{p}\biggr)^{\frac{1}{p}}\ltn W\rtn_{p-var,[s,t]} \\
			&\leq C_\sigma\ltn W\rtn_{p-var,[s,t]}+CC_\sigma\ltn y\rtn_{p-var,[s,t]}\ltn W\rtn_{p-var,[s,t]}.
		\end{aligned}
	\end{equation}
	According to the definition of $p$--variation semi-norm, the above inequality means that
	$$\ltn y\rtn_{p-var,[s,t]}\leq 2C_{\sigma} \ltn W\rtn_{p-var,[s,t]}+ 2C_{\sigma}C\ltn W\rtn_{p-var,[s,t]}\ltn y\rtn_{p-var,[s,t]}, \forall s,t\in[a,b].$$
	Now, we consider the greedy stopping times sequence \eqref{Sec2:pstopping times} with $\gamma=\gamma_1=\frac{1}{4C_{\sigma}C}$.  On each stopping times interval $[\tau_i,\tau_{i+1}],i=0,\cdots, N_{\gamma_1,[a,b],p}(W)-1$, we have
	$$\ltn y\rtn_{p-var,[\tau_i,\tau_{i+1}]}\leq 1, i=0,\cdots, N_{\gamma_1,[a,b],p}(W)-1.$$
	By Lemma \ref{Sec2: equvalence Control}, it follows that
	\begin{equation}
		\begin{aligned}
			\ltn y\rtn_{p-var,[a,b]}&\leq N_{\gamma_1,[a,b],p}^{\frac{p-1}{p}}(W)\left(\sum_{i=0}^{N_{\gamma_1,[a,b],p}(W)-1}\ltn y\rtn_{p-var,[\tau_i,\tau_{i+1}]}^p\right)^{\frac{1}{p}}\\
			&\leq N_{\gamma_1,[a,b],p}(W).
		\end{aligned}
	\end{equation}
\end{proof}
Furthermore, the solution to equations \eqref{Sec1:pure Young}   depends continuously  on  the initial data.
\begin{lem}\label{Sec3:Lemma 3.2}
Let $y_t$ and $\bar{y}_t$ be the solutions to the Young differential equation \eqref{Sec1:pure Young} with  initial data $y_a$ and $\bar{y}_a$, respectively.  If assumption \textbf{A2} holds.  Then $$\ltn \bar{y}-
	y\rtn_{p-var,[a,b]}\leq 2^{N_{\frac{1}{2\tilde{M}},[a,b],p}-1}N_{\frac{1}{2\tilde{M}},[a,b],p}(\bar{y}_a-y_a)$$ and $$\|\bar{y}-
	y\|_{\infty,[a,b]}\leq( 2^{N_{\frac{1}{2\tilde{M}},[a,b],p}-1}N_{\frac{1}{2\tilde{M}},[a,b],p}+1)(\bar{y}_a-y_a)$$ hold for almost every $W\in\Omega$,  where $\tilde{M}=8C_{\sigma}C(\ltn \bar{y}\rtn_{p-var,[a,b]}+\ltn y\rtn_{p-var,[a,b]}+1)$.	
\end{lem}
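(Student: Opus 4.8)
The plan is to set $v:=\bar{y}-y$ and exploit that it solves the Young integral equation $v_{s,t}=\int_s^t\big(\sigma(\bar{y}_r)-\sigma(y_r)\big)\,dW_r$, then to run the same greedy-stopping-time/concatenation scheme as in Theorem~\ref{Sec3:Thm3.1} and Lemma~\ref{Sec2:lemma-bounded}, but now tracking the linear dependence on $\|\bar{y}_a-y_a\|$. First I would record the local Lipschitz estimate for the difference of the drift in the $p$-variation semi-norm. Writing $\sigma(\bar{y}_r)-\sigma(y_r)=G_r\,v_r$ with $G_r:=\int_0^1 D\sigma(y_r+\theta v_r)\,d\theta$ and using $\|D\sigma\|_\infty,\|D^2\sigma\|_\infty\le C_\sigma$ from assumption \textbf{A2}, the bounds $\|G_r\|\le C_\sigma$ and $\|G_w-G_u\|\le C_\sigma(\|y_{u,w}\|+\|v_{u,w}\|)$ (the $p$-variation analogue of applying the mean value theorem twice, exactly as in the contraction step of Theorem~\ref{Sec3:Thm3.1}) yield, for any $[s,t]\subset[a,b]$,
\begin{equation}\nonumber
\ltn \sigma(\bar{y})-\sigma(y)\rtn_{p-var,[s,t]}\le C_\sigma\ltn v\rtn_{p-var,[s,t]}+C_\sigma\big(\ltn \bar{y}\rtn_{p-var,[s,t]}+\ltn y\rtn_{p-var,[s,t]}\big)\big(\|v_s\|+\ltn v\rtn_{p-var,[s,t]}\big).
\end{equation}

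Next I would feed this into the Young--Loève bound \eqref{Sec2:p-Youngbounded}. Applying it on subintervals and converting to the semi-norm exactly as in the proof of Lemma~\ref{Sec2:lemma-bounded} (using $\|\sigma(\bar{y}_u)-\sigma(y_u)\|\le C_\sigma\|v_u\|\le C_\sigma(\|v_s\|+\ltn v\rtn_{p-var,[s,t]})$ for the leading term) produces, with $C>1$ the Young constant, an inequality of the schematic form
\begin{equation}\nonumber
\ltn v\rtn_{p-var,[s,t]}\le 4CC_\sigma\big(1+\ltn \bar{y}\rtn_{p-var,[s,t]}+\ltn y\rtn_{p-var,[s,t]}\big)\ltn W\rtn_{p-var,[s,t]}\big(\|v_s\|+\ltn v\rtn_{p-var,[s,t]}\big).
\end{equation}
I would then run the greedy sequence $\{\tau_i(\tfrac{1}{2\tilde M},[a,b],p)\}$ of \eqref{Sec2:pstopping times} with $\tilde M=8C_\sigma C(\ltn \bar{y}\rtn_{p-var,[a,b]}+\ltn y\rtn_{p-var,[a,b]}+1)$, whose finiteness is guaranteed by Lemma~\ref{Sec2:lemma-bounded}. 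Since $\ltn W\rtn_{p-var,[\tau_i,\tau_{i+1}]}\le \tfrac{1}{2\tilde M}$, the coefficient of $\big(\|v_{\tau_i}\|+\ltn v\rtn_{p-var,[\tau_i,\tau_{i+1}]}\big)$ on the right-hand side is at most $\tfrac14$; absorbing the $\ltn v\rtn$ contribution into the left gives the key per-interval estimate
\begin{equation}\nonumber
\ltn v\rtn_{p-var,[\tau_i,\tau_{i+1}]}\le \|v_{\tau_i}\|,\qquad i=0,\dots,N-1,
\end{equation}
where $N:=N_{\frac{1}{2\tilde M},[a,b],p}(W)$ and the factor $8$ in $\tilde M$ is precisely what makes the coefficient genuinely absorbable with a bound $\le 1$.

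With the per-interval bound in hand, the assembly is routine. From $\|v_{\tau_{i+1}}\|\le \|v_{\tau_i}\|+\ltn v\rtn_{p-var,[\tau_i,\tau_{i+1}]}\le 2\|v_{\tau_i}\|$ I get $\|v_{\tau_i}\|\le 2^i\|v_a\|$ with $v_a=\bar{y}_a-y_a$, hence $\ltn v\rtn_{p-var,[\tau_i,\tau_{i+1}]}\le 2^{i}\|v_a\|\le 2^{N-1}\|v_a\|$. Summing the $p$-th powers and invoking the second inequality of Lemma~\ref{Sec2: equvalence Control} gives
\begin{equation}\nonumber
\ltn \bar{y}-y\rtn_{p-var,[a,b]}\le N^{\frac{p-1}{p}}\Big(\sum_{i=0}^{N-1}\ltn v\rtn_{p-var,[\tau_i,\tau_{i+1}]}^{p}\Big)^{\frac1p}\le N^{\frac{p-1}{p}} N^{\frac1p}\,2^{N-1}\|v_a\|=2^{N-1}N\,\|\bar{y}_a-y_a\|,
\end{equation}
and the sup-norm claim follows from $\|\bar{y}-y\|_{\infty,[a,b]}\le \|v_a\|+\ltn \bar{y}-y\rtn_{p-var,[a,b]}\le (2^{N-1}N+1)\|\bar{y}_a-y_a\|$.

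The main obstacle I anticipate is the second step: passing from the pointwise Young--Loève estimate to a clean $p$-variation semi-norm inequality in which the coefficient multiplying $\ltn v\rtn_{p-var}$ is truly absorbable after the greedy choice $\gamma=\tfrac{1}{2\tilde M}$. This requires careful bookkeeping of which factors carry the small quantity $\ltn W\rtn_{p-var,[\tau_i,\tau_{i+1}]}$ versus the $O(1)$ contributions $\ltn \bar{y}\rtn_{p-var}+\ltn y\rtn_{p-var}$, together with the double mean-value estimate for $\sigma(\bar{y})-\sigma(y)$ and the elementary bound $\|v_u\|\le\|v_{\tau_i}\|+\ltn v\rtn_{p-var,[\tau_i,\tau_{i+1}]}$ used to control the leading term. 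Once that per-interval contraction is established, the exponential accumulation $2^{N-1}N$ and both claimed bounds are immediate.
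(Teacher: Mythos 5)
Your proposal is correct and follows essentially the same route as the paper's own proof: the difference $v=\bar{y}-y$ solves the Young equation driven by $\sigma(\bar{y})-\sigma(y)$, which is estimated by the double mean-value argument (your integral operator $G_r=\int_0^1 D\sigma(y_r+\theta v_r)\,d\theta$ is just its integral form), then the Young--Lo\`eve bound, the greedy stopping times with $\gamma=\tfrac{1}{2\tilde M}$, absorption to get $\ltn v\rtn_{p-var,[\tau_i,\tau_{i+1}]}\leq\|v_{\tau_i}\|$, the doubling iteration $\|v_{\tau_i}\|\leq 2^i\|v_a\|$, and assembly via Lemma \ref{Sec2: equvalence Control}. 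The constants differ only in harmless bookkeeping, and both arguments conclude the sup-norm bound identically.
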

\begin{proof}
	For almost every given $W\in\Omega$, observe that
	\begin{equation}
		\begin{aligned}
			\bar{y}_t-y_t=\bar{y}_a-y_a+\int_{a}^{t}\sigma(\bar{y}_r)-\sigma(y_r)dW_r,\quad \forall t\in[a,b].
		\end{aligned}
	\end{equation}
	First, we analyze the $p$--variation semi-norm $\ltn \bar{y}-y\rtn_{p-var,[a,b]}$.  For any $s<t\in[a,b]$,   the Young bound \eqref{Sec2:p-Youngbounded} and assumption \textbf{A2} yield 
	\begin{equation}\label{Sec3:p-var3.22}
		\begin{aligned}
			\|(\bar{y}-y)_{s,t}\|&=\left\|\int_{s}^t\sigma(\bar{y}_r)-\sigma(y_r)dW_r\right\|\\
			&\leq \left\|(\sigma(\bar{y}_s)-\sigma(y_s))W_{s,t}\right\|\!+\!C\ltn \sigma(\bar{y})-\sigma(y)\rtn_{p-var,[s,t]}\ltn W\rtn_{p-var,[s,t]}\\
			&\leq C_\sigma\|\bar{y}_s\!-\!y_s\|\ltn W\rtn_{p-var,[s,t]}\!+\!C\ltn \sigma(\bar{y})\!-\!\sigma(y)\rtn_{p-var,[s,t]}\ltn W\rtn_{p-var,[s,t]}.
		\end{aligned}
	\end{equation}
	For $\ltn \sigma(\bar{y})\!-\!\sigma(y)\rtn_{p-var,[s,t]}$, applying the mean value theorem twice and  assumption \textbf{A2}, we obtain
	\begin{align*}
		&\ltn \sigma(\bar{y})\!-\!\sigma(y)\rtn_{p-var,[s,t]}=\left(\sup_{\mathcal{P}([s,t])}\sum_{[u,v]\in\mathcal{P}(s,t)}\|\sigma(\bar{y}_v)-\sigma(y_v)-\sigma(\bar{y}_u)+\sigma(y_u)\|^p\right)^{\frac{1}{p}}\\
		&~~\leq \biggr(\sup_ {\mathcal{P}([s,t])}\sum_{[u,v]\in\mathcal{P}([s,t])}\biggr(C_\sigma\|\bar{y}_v-y_v-\bar{y}_u+y_u\|\\
		&~~~~~~~~~~~~~~~~~~~~~~~~~~~~~~~~+C_\sigma\left(\|\bar{y}_v-\bar{y}_u\|+\|y_v-y_u\|\right)\|\bar{y}_u-y_u\|\biggr)^p\biggr)^{\frac{1}{p}}\\
		&~~\leq 2C_\sigma\ltn \bar{y}-y\rtn_{p-var,[s,t]} +4C_\sigma(\ltn \bar{y}\rtn_{p-var,[s,t]}+\ltn y\rtn_{p-var,[s,t]})\|\bar{y}-y\|_{\infty,[s,t]}.
	\end{align*}
	Let $\tilde{M}:=8C_{\sigma}C(\ltn \bar{y}\rtn_{p-var,[a,b]}+\ltn y\rtn_{p-var,[a,b]}+1)$, By Lemma \ref{Sec2:lemma-bounded}, it follows that  $\tilde{M}\leq 8C_{\sigma}C(2N_{\gamma_1,[a,b],p}(W)+1)<\infty$ holds for almost every $W\in\Omega$. From  \eqref{Sec3:p-var3.22}, we deduce that
	\begin{equation}
		\begin{aligned}
			\ltn \bar{y}-y\rtn_{p-var,[s,t]}\leq \tilde{M}\|\bar{y}_s-y_s\|\ltn W\rtn_{p-var,[s,t]}+\tilde{M}\ltn \bar{y}-y\rtn_{p-var,[s,t]}\ltn W\rtn_{p-var,[s,t]}
		\end{aligned}
	\end{equation}
	holds for all $[s,t]\subset[a,b]$. Consider the greedy stopping times sequence \eqref{Sec2:pstopping times} with $\gamma=\frac{1}{2\tilde{M}}$, then
	$$\ltn \bar{y}-y\rtn_{p-var,[\tau_i,\tau_{i+1}]}\leq \|\bar{y}_{\tau_{i}}-y_{\tau_i}\|,\quad i=0,\cdots, N_{\frac{1}{2\tilde{M}},[a,b],p}-1.$$
	Therefore, we have
	\begin{equation}\label{Sec:p-var3.24}
		\begin{aligned}
			\ltn \bar{y}-y\rtn_{p-var,[\tau_i,\tau_{i+1}]}&\leq \|\bar{y}_{\tau_{i}}-y_{\tau_i}\|\\
			&\leq  \ltn \bar{y}-y\rtn_{p-var,[\tau_{i-1},\tau_{i}]}+\|\bar{y}_{\tau_{i-1}}-y_{\tau_{i-1}}\|\\
			&\leq 2\|\bar{y}_{\tau_{i-1}}-y_{\tau_{i-1}}\|\\
			&\vdots\\
			&\leq 2^i\|\bar{y}_a-y_a\|.
		\end{aligned}
	\end{equation}
	From \eqref{Sec:p-var3.24} and Lemma \ref{Sec2: equvalence Control},  we derive that
	$$\ltn \bar{y}-y\rtn_{p-var,[a,b]}\leq 2^{N_{\frac{1}{2\tilde{M}},[a,b],p}-1}N_{\frac{1}{2\tilde{M}},[a,b],p}\|\bar{y}_a-y_a\|.$$
	Furthermore, it follows that  $\ltn \bar{y}-y\rtn_{\infty,[a,b]}\leq ( 2^{N_{\frac{1}{2\tilde{M}},[a,b],p}-1}N_{\frac{1}{2\tilde{M}},[a,b],p}+1)\|\bar{y}_a-y_a\|$.
\end{proof}
\begin{thm}\label{Sec3:thm3.2}
	For almost every given $W\in\Omega$.  The solution  $y_t(W,y_a)$ of \eqref{Sec1:pure Young} is differentiable with respect to $y_a$, its derivative $\frac{\partial y_{t}(W,\cdot)}{\partial y}\in C^{\alpha}([a,b];\mathbb{R}^{m\times m})$ is  the matrix-valued  solution of  Young differential equation 
	\begin{equation}\label{Sec3:3.19}
		d\xi_t=D\sigma(y_t)\xi_tdW_t,~ t\in[a,b].
	\end{equation}
\end{thm}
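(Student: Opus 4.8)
The plan is to first construct the candidate derivative as the solution of the linear Young equation \eqref{Sec3:3.19}, and then to verify, via a second-order Taylor expansion combined with the continuous-dependence estimate of Lemma \ref{Sec3:Lemma 3.2}, that this solution is genuinely the Fr\'echet derivative of $y_a\mapsto y_t(W,y_a)$. First I would establish global well-posedness of \eqref{Sec3:3.19} with initial datum $\xi_a=Id$ in $C^\alpha([a,b];\mathbb{R}^{m\times m})$. Since $y\in C^\alpha([a,b];\mathbb{R}^m)$ is a fixed path and $\sigma\in C_b^3$ by Assumption \textbf{A2}, the composition $t\mapsto D\sigma(y_t)$ lies in $C^\alpha$, is bounded by $C_\sigma$, and satisfies $\ltn D\sigma(y)\rtn_{\alpha,I}\le C_\sigma\ltn y\rtn_{\alpha,I}$. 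Because the equation is linear in $\xi$, the same Banach fixed-point argument on a greedy stopping-time interval as in Theorem \ref{Sec3:Thm3.1}, followed by concatenation along the sequence \eqref{Sec2:pstopping times}, yields a unique global solution; the linear (hence linear-growth) structure rules out blow-up, so the local solutions patch together over all of $[a,b]$.

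Next I would fix an increment $h\in\mathbb{R}^m$, write $y^h:=y(W,y_a+h)$ and $y:=y(W,y_a)$, and set
\[
\eta_t:=y_t^h-y_t-\xi_t h,\qquad \eta_a=0.
\]
Subtracting the integral forms of the three equations and Taylor-expanding $\sigma$ to second order about $y_r$ gives
\[
\eta_t=\int_a^t D\sigma(y_r)\,\eta_r\,dW_r+\int_a^t R_r\,dW_r,\qquad R_r:=\sigma(y_r^h)-\sigma(y_r)-D\sigma(y_r)(y_r^h-y_r),
\]
so $\eta$ solves a \emph{linear inhomogeneous} Young equation with forcing $R$. The mean-value form of the remainder together with $\|D^2\sigma\|_\infty\le C_\sigma$ gives $\|R_r\|\le\tfrac12 C_\sigma\|y_r^h-y_r\|^2$, while Lemma \ref{Sec3:Lemma 3.2} bounds both $\|y^h-y\|_{\infty,[a,b]}$ and $\ltn y^h-y\rtn_{p-var,[a,b]}$ by a constant (depending on $W$) times $\|h\|$. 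Hence the forcing is of size $O(\|h\|^2)$.

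Finally I would run a Gr\"onwall-type iteration for $\eta$ along a greedy stopping-time sequence \eqref{Sec2:pstopping times} with $\gamma$ chosen so that the homogeneous contribution contracts. On each interval $[\tau_i,\tau_{i+1}]$ the Young--Loeve bound \eqref{Sec2:p-Youngbounded} applied to both integrals yields an estimate of the schematic form
\[
\ltn\eta\rtn_{p-var,[\tau_i,\tau_{i+1}]}\le\tfrac12\|\eta_{\tau_i}\|+C\|h\|^2,
\]
which, since $\eta_a=0$, iterates exactly as in Lemma \ref{Sec3:Lemma 3.2} to $\|\eta\|_{\infty,[a,b]}\le C(W)\|h\|^2$. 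Dividing by $\|h\|$ and letting $\|h\|\to0$ shows that $y_a\mapsto y_t(W,y_a)$ is Fr\'echet differentiable with derivative $\xi_t$, and the $C^\alpha$-regularity of the derivative path is exactly the output of the first step.

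The hard part will be controlling the forcing $R$ not merely in the supremum norm but in the $p$-variation seminorm $\ltn R\rtn_{p-var}$, which is what the Young estimate \eqref{Sec2:p-Youngbounded} actually requires of the integrand. Bounding $\ltn R\rtn_{p-var}$ forces one to expand the increments $R_{u,v}$, and controlling the increments of the term $D\sigma(y_r)(y_r^h-y_r)$ uses the Lipschitz continuity of $D^2\sigma$ together with the $p$-variation bounds on $y^h-y$ from Lemma \ref{Sec3:Lemma 3.2}; this is precisely the step where the $C_b^3$ hypothesis on $\sigma$ (rather than $C_b^2$) is indispensable, and it is the quadratic dependence on $\|h\|$ it produces that guarantees $\eta=o(\|h\|)$.
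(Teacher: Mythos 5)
Your proposal is correct and follows essentially the same route as the paper's proof: well-posedness of the linear equation \eqref{Sec3:3.19} via Banach fixed point on greedy stopping-time intervals plus concatenation, then writing the remainder $r_t=\bar y_t-y_t-\xi_t(\bar y_a-y_a)$ as the solution of a linear inhomogeneous Young equation whose Taylor-remainder forcing is bounded quadratically in $\|\bar y_a-y_a\|$ via Lemma \ref{Sec3:Lemma 3.2}, and iterating along a greedy stopping-time sequence to conclude $\|r\|_{\infty,[a,b]}=O(\|\bar y_a-y_a\|^2)$. You also correctly identified the genuine technical crux -- estimating the forcing in $p$-variation rather than sup norm, which is exactly where the $C_b^3$ hypothesis enters in the paper's estimate of $\ltn e\rtn_{p\text{-}var}$.
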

\begin{proof}
	First, we establish the well-posedness of \eqref{Sec3:3.19}. Following a similar approach to the well-posedness of \eqref{Sec1:pure Young}, we employ the Banach fixed-point theorem and concatenation technique. For almost every given $W\in\Omega$, consider a sequence of greedy stopping times $\{\hat{\tau}_{i}(\frac{1}{2M_1},[a,b],\alpha)\},i=0,\cdots ,N_{\frac{1}{2M_1},[a,b],\alpha}$, where $M_1:=2C_\sigma(\ltn y\rtn_{\alpha,[a,b]}+1)C\vee 1$, then we define the solution space
	$$\hat{S}:=\left\{\xi\in C^{\alpha}([a,\hat\tau_1];\mathbb{R}^{m\times m}):\text{with initial data}~\xi_a,~\text{and}~\ltn \xi\rtn_{\alpha,[a,\hat\tau_1]}\leq 2M_1\|\xi_a\|\right\}.$$
	Define the map $\hat{\mathcal{M}}$ as 
	\begin{equation}
		\hat{\mathcal{M}}(\xi)(r):=\xi_a+\int_{a}^{r}D\sigma(y_s)\xi_sdW_s,\quad \xi\in\hat{S},~ r\in [a,\hat{\tau}_1].
	\end{equation}

	\textbf{\textit{Invariance:}} For any $\xi\in \hat{S}$ and $s<r\in[a,\hat{\tau}_1]$,  the estimate  in  Remark \ref{Sec2:Young est-H} yields
	\begin{equation}\label{Sec3:3.21}
		\begin{aligned}
			\| \hat{\mathcal{M}}(\xi)_{s,r}\|&=\left\|\int_{s}^{r}D\sigma(y_{s^\prime})\xi_{s^\prime}dW_{s^\prime}\right \|\\
			&\leq \left\|D\sigma(y_s)\xi_sW_{s,r}\right\|+C\ltn D\sigma(y)\xi\rtn_{\alpha,[s,r]}\ltn W \rtn_{\alpha,[s,r]}(r-s)^{2\alpha}\\
			&\leq \|D\sigma(y_s)\|\left\|\xi_s\right\|\ltn W\rtn_{\alpha,[s,r]}(r-s)^{\alpha}\\
			&~~~~+C\ltn D\sigma(y)\xi\rtn_{\alpha,[s,r]}\ltn W \rtn_{\alpha,[s,r]}(r-s)^{2\alpha}.
		\end{aligned}
	\end{equation}
	Note that
	\begin{equation}\label{Sec3:3.22}
		\begin{aligned}
			\|D\sigma(y_s)\|\left\|\xi_s\right\|&\leq C_\sigma\|\xi_s\|\\
			&\leq C_\sigma\ltn \xi \rtn_{\alpha,[a,\hat{\tau}_1]}(\hat{\tau}_1-a)^{\alpha}+C_\sigma\|\xi_a\|,
		\end{aligned}
	\end{equation}
	and for any $s_1>s_2\in [s,r]$, we have
	\begin{equation}\label{Sec3:3.23}
		\begin{aligned}
			&\left\|D\sigma(y_{s_1})\xi_{s_1}-D\sigma(y_{s_2})\xi_{s_2}\right\|\\
			&\leq \|D\sigma(y_{s_1})-D\sigma(y_{s_2})\|\left\|\xi_{s_1}\right\|+\|D\sigma(y_{s_2})\|\left\| \xi_{s_1}-\xi_{s_2}\right\|\\
			&\leq C_{\sigma}\|y_{s_1}-y_{s_2}\|\|\xi_{s_1}\|+C_\sigma\|\xi_{s_1}-\xi_{s_2}\|\\
			&\leq \left[C_{\sigma}\ltn y\rtn_{\alpha,[a,\hat{\tau}_1]}(\ltn \xi\rtn_{\alpha,[a,\hat{\tau}_1]}(\hat{\tau}_1-a)^{\alpha}+\|\xi_a\|)+C_\sigma\ltn \xi\rtn_{\alpha,[a,\hat{\tau}_1]}\right](s_1-s_2)^\alpha.
		\end{aligned}
	\end{equation}
	So \eqref{Sec3:3.23} shows that
	\begin{equation}\label{Sec3:3.24}
		\begin{aligned}
			\ltn D\sigma(y)\xi\rtn_{\alpha,[s,r]}&\leq C_\sigma\ltn y\rtn_{\alpha,[a,\hat{\tau}_1]}(\|\xi_a\|+\ltn \xi\rtn_{\alpha,[a,\hat{\tau}_1]}(\hat{\tau}_1-a)^{\alpha})+C_\sigma\ltn \xi\rtn_{\alpha,[a,\hat{\tau}_1]}.
		\end{aligned}
	\end{equation}
	From \eqref{Sec3:3.21},\eqref{Sec3:3.22},\eqref{Sec3:3.24}, we know that
	\begin{equation}
		\begin{aligned}
			\ltn \hat{\mathcal{M}}(\xi)\rtn_{\alpha,[a,\hat{\tau}_1]}\leq& C_\sigma C(\ltn y\rtn_{\alpha,[a,b]}+1)\|\xi_a\|\\
			&+2C_\sigma C(\ltn y\rtn_{\alpha,[a,b]}+1)\ltn \xi\rtn_{\alpha,[a,\hat{\tau}_1]}(\ltn W\rtn_{\alpha,[a,\hat{\tau}_1]}+(\hat{\tau}_1-a)^{\alpha})\\
			\leq&M_1\|\xi_a\|+M_1\ltn \xi \rtn_{\alpha,[a,\hat{\tau}_1]}(\ltn W\rtn_{\alpha,[a,\hat{\tau}_1]}+(\hat{\tau}_1-a)^{\alpha})\\
			\leq &2M_1\|\xi_a\|.
		\end{aligned}
	\end{equation}
	Thus, the mapping $\hat{\mathcal{M}}$ map $\hat{S}$ to $\hat{S}$.
	
	\textbf{\textit{Contraction:}} For any $\xi,\theta\in\hat{S}$, 
	\begin{equation}
		\begin{aligned}
			\hat{\mathcal{M}}(\xi)(t)\!-\!\hat{\mathcal{M}}(\theta)(t)=\int_{a}^{t}D\sigma(y_s)(\xi_s-\theta_s)dW_s,\quad t\in [a,\hat{\tau}_1].
		\end{aligned}
	\end{equation}
	For any $s_1<s_2\in[a,\hat{\tau}_1]$,  according  to  Remark \ref{Sec2:Young est-H} we have
	\begin{equation}\label{Sec3:3.31}
		\begin{aligned}
			&\left\| (\hat{\mathcal{M}}(\xi)-\hat{\mathcal{M}}(\theta))_{s_1,s_2}\right\|=\left\|\int_{s_1}^{s_2}D\sigma(y_s)(\xi_s-\theta_s)dW_s\right\|\\
			&\leq\left\|D\sigma(y_{s_1})(\xi_{s_{1}}-\theta_{s_{1}})W_{s_1,s_2}\right\|\\
			&~~~~+C\ltn D\sigma(y)(\xi-\theta)\rtn_{\alpha,[s_1,s_2]}\ltn W\rtn_{\alpha,[s_1,s_2]}(s_2-s_1)^{2\alpha}.
		\end{aligned}
	\end{equation}
	Similar to the computation of  \eqref{Sec3:3.22}, we have
	\begin{equation}\label{Sec3:3.27}
		\begin{aligned}
			&\left\|D\sigma(y_{s_1})(\xi_{s_1}-\theta_{s_1})W_{s_1,s_2}\right\|\\
			&~~~~\leq C_\sigma\ltn \xi-\theta\rtn_{\alpha,[a,\hat{\tau}_1]}\ltn W\rtn_{\alpha,[a,\hat{\tau}_1]}(\hat{\tau}_1-a)^{\alpha}(s_2-s_1)^{\alpha},
		\end{aligned}
	\end{equation}
	and similar to the computations of \eqref{Sec3:3.23} and \eqref{Sec3:3.24},
	\begin{equation}\label{Sec3:3.29}
		\begin{aligned}
			&\ltn D\sigma(y)(\xi-\theta) \rtn_{\alpha,[s_1,s_2]}\\
			&\leq C_\sigma\ltn \xi-\theta\rtn_{\alpha,[a,\hat{\tau}_1]}+C_\sigma\ltn y\rtn_{\alpha,[a,b]}\ltn \xi-\theta \rtn_{\alpha,[a,\hat{\tau}_1]} (\hat{\tau}_1-a)^{\alpha}.
		\end{aligned}
	\end{equation}
	\eqref{Sec3:3.31}-\eqref{Sec3:3.29} imply that
	\begin{equation}
		\begin{aligned}
			\ltn \hat{\mathcal{M}}(\xi)-\hat{\mathcal{M}}(\theta) \rtn_{\alpha,[a,\hat{\tau}_1]} &\leq 2C_{\sigma}C(\ltn y\rtn_{\alpha,[a,b]}+1)\ltn \xi-\theta\rtn_{\alpha,[a,\hat{\tau}_1]}(\ltn W\rtn_{\alpha,[a,\hat{\tau}_1]}+(\hat{\tau}_1-a)^{\alpha} )\\
			&\leq \frac{1}{2}\ltn \xi-\theta\rtn_{\alpha,[a,\hat{\tau}_1]}.
		\end{aligned}
	\end{equation}
	Thus, the mapping $\hat{\mathcal{M}}$ is contractive on $\hat{S}$. The Banach fixed-point theorem shows that equations \eqref{Sec3:3.19} has a unique $\alpha$-H\"{o}lder continuous solution on $[a,\hat{\tau}_1]$.  As the proof of Theorem \ref{Sec3:Thm3.1},  by repeatedly using  the Banach fixed-point theorem on each stopping times interval $[\hat{\tau}_{i},\hat{\tau}_{i+1}],i=1,\cdots, N_{\gamma,I,\alpha}(x)-1$, we  can obtain the global solution in $C^{\alpha}([a,b];\mathbb{R}^{m\times m})$.
	
For alomst every given $W$. 	Next, we show that the solution of \eqref{Sec3:3.19} is Fr\'{e}chet derivative of  $y_t(W,y_a)$ with respect to $y_a$.  Let  $\Phi(t,W,y_a)=\xi_t$ be	 the solution operator of \eqref{Sec3:3.19}. Let $\eta_t:=\Phi(t,W,y_a)(\bar{y}_a-y_a)$ and $r_t=\bar{y}_t-y_t-\eta_t$, then $r_a=0$.  It follows that 
	\begin{equation}\nonumber
		\begin{aligned}
			r_t&=\int_{a}^{t}\sigma(\bar{y}_r)-\sigma(y_r)-D\sigma(y_r)\xi_r(\bar{y}_a-y_a)dW_r\\
			&=\int_{a}^{t}\int_{0}^{1}[D\sigma(y_r+\theta(\bar{y}_r-y_r))-D\sigma(y_r)](\bar{y}_r-y_r)d\theta dW_r\\
			&~~~~~+\int_{a}^{t}D\sigma(y_s)r_sdW_s.
		\end{aligned}
	\end{equation}
	Now, we  estimate $\|r\|_{\infty,[a,b]}$ and $\ltn r\rtn_{p-var,[a,b]}$. From above equality, we have  
	\begin{equation}
		\begin{aligned}
			r_{s,t}&=e_{s,t}+\int_{s}^{t}D\sigma(y_{s^\prime})r_{s^\prime}dW_{s^\prime},~s<t\in[a,b]
		\end{aligned}
	\end{equation}
	where $e_{s,t}=\int_{s}^{t}\int_{0}^{1}[D\sigma(y_r+\theta(\bar{y}_r-y_r))-D\sigma(y_r)](\bar{y}_r-y_r)d\theta dW_r$. Then,  Young bound \eqref{Sec2:p-Youngbounded} yields
	\begin{equation}
		\begin{aligned}
			\|r_{s,t}\|&\leq \|e_{s,t}\|+\left\|\int_{s}^{t}D\sigma(y_{s^\prime})r_{s^\prime}dW_{s^\prime}\right\|\\
			&\leq \|e_{s,t}\|+\left\|D\sigma(y_{s})r_sW_{s,t}\right\|+C\ltn D\sigma(y)r \rtn_{p-var,[s,t]}\ltn W\rtn_{p-var,[s,t]}\\
			&\leq \|e_{s,t}\|+C_{\sigma}\|r_s\|\ltn W\rtn_{p-var,[s,t]}+C\ltn D\sigma(y)r\rtn_{p-var,[s,t]}\ltn W\rtn_{p-var,[s,t]}.
		\end{aligned}
	\end{equation}
	Observe that
	\begin{equation}\nonumber
		\begin{aligned}
			\ltn D\sigma(y)r \rtn_{p-var,[s,t]}&=\biggr(\sup_{\mathcal{P}([s,t])}\sum_{[u,v]\in\mathcal{P}([s,t])}\biggr\|D\sigma(y_v)r_v
			-D\sigma(y_u)r_u \biggr\|^p\biggr)^{\frac{1}{p}}\\
			&\leq 2\biggr(\sup_{\mathcal{P}([s,t])}\sum_{[u,v]\in\mathcal{P}([s,t])}\biggr\|(D\sigma(y_v)
			-D\sigma(y_u))r_v\biggr\|^p\biggr)^{\frac{1}{p}}\\
			&~~~~+2\biggr(\sup_{\mathcal{P}([s,t])}\sum_{[u,v]\in\mathcal{P}([s,t])}\biggr\|
			D\sigma(y_u)r_{u,v} \biggr\|^p\biggr)^{\frac{1}{p}}\\
			&\leq 2C_\sigma\ltn y\rtn_{p-var,[s,t]}\|r\|_{\infty,[s,t]}+2C_\sigma\ltn r\rtn_{p-var,[s,t]}.
		\end{aligned}
	\end{equation}
	Hence,
	\begin{equation}\label{Sec3:3.39}
		\begin{aligned}
			\|r_{s,t}\|\leq &\|e_{s,t}\|+2CC_\sigma\ltn W\rtn_{p-var,[s,t]}\|r\|_{p-var,[s,t]}\\
			&+2C_\sigma C(\ltn y\rtn_{p-var,[a,b]}+1)\ltn W\rtn_{p-var,[s,t]}\|r\|_{\infty,[s,t]}.
		\end{aligned}
	\end{equation}
	Then, we need to estimate $\|e_{s,t}\|$, using  Young bound \eqref{Sec2:p-Youngbounded} and  mean-value theorem twice, one gets
	\begin{equation}\nonumber
		\begin{aligned}
			\|e_{s,t}\|&\leq \left\|\int_{0}^{1}[D\sigma(y_s+\theta(\bar{y}_s-y_s))-D\sigma(y_s)](\bar{y}_s-y_s)d\theta W_{s,t}\right\|\\
			&~~~~+C\ltn\int_{0}^{1}[D\sigma(y+\theta(\bar{y}-y))-D\sigma(y)](\bar{y}-y)d\theta \rtn_{p-var,[s,t]}\ltn W\rtn_{p-var,[s,t]}\\
			&\leq C_\sigma\|\bar{y}-y\|^2_{\infty,[a,b]}\ltn W\rtn_{p-var,[s,t]}+8CC_\sigma\ltn \bar{y}-y\rtn_{p-var,[a,b]}\|\bar{y}-y\|_{\infty,[a,b]}\ltn W\rtn_{p-var,[s,t]}\\
			&~~~~+8CC_\sigma(\ltn \bar{y}\rtn_{p-var,[a,b]}+\ltn y\rtn_{p-var,[a,b]})\|\bar{y}-y\|_{\infty,[a,b]}^2\ltn W\rtn_{p-var,[s,t]}\\
			&\leq 8CC_\sigma(\ltn \bar{y}\rtn_{p-var,[a,b]}+\ltn y\rtn_{p-var,[a,b]}+1)\|\bar{y}-y\|_{\infty,[a,b]}^2\ltn W\rtn_{p-var,[s,t]}\\
			&~~~~+8CC_\sigma\ltn \bar{y}-y\rtn_{p-var,[a,b]}\|\bar{y}-y\|_{\infty,[a,b]}\ltn W\rtn_{p-var,[s,t]}.
		\end{aligned}
	\end{equation}
	The above inequality shows that
	\begin{equation}\label{Sec3:3.40}
		\begin{aligned}
			\ltn e\rtn_{p-var,[s,t]}&\leq 16CC_\sigma(\ltn \bar{y}\rtn_{p-var,[a,b]}+\ltn y\rtn_{p-var,[a,b]}+1)\|\bar{y}-y\|_{\infty,[a,b]}^2\ltn W\rtn_{p-var,[s,t]}\\
			&~~~~+16CC_\sigma\ltn \bar{y}-y\rtn_{p-var,[a,b]}\|\bar{y}-y\|_{\infty,[a,b]}\ltn W\rtn_{p-var,[s,t]}.
		\end{aligned}
	\end{equation}
For any $s<t\in[a,b]$, 	in view of  \eqref{Sec3:3.39} and \eqref{Sec3:3.40}, we have that
	\begin{equation}
		\begin{aligned}
			\ltn r \rtn_{p-var,[s,t]}&\leq 64CC_\sigma(\ltn \bar{y}\rtn_{p-var,[a,b]}+\ltn y\rtn_{p-var,[a,b]}+1)\|\bar{y}-y\|_{\infty,[a,b]}^2\ltn W\rtn_{p-var,[s,t]}\\
			&~~~~+64CC_\sigma\ltn \bar{y}-y\rtn_{p-var,[a,b]}\|\bar{y}-y\|_{\infty,[a,b]}\ltn W\rtn_{p-var,[s,t]}\\
			&~~~~+16CC_\sigma(\ltn y\rtn_{p-var,[a,b]}+1)\ltn W\rtn_{p-var,[s,t]}\|r\|_{p-var,[s,t]}\\
			&~~~~+8CC_\sigma(\ltn y\rtn_{p-var,[a,b]}+1)\ltn W\rtn_{p-var,[s,t]}\|r_s\|.
		\end{aligned}
	\end{equation}
	Consider  the greedy stopping times sequence \eqref{Sec2:pstopping times} with $\gamma=\gamma_2=\frac{1}{32CC_\sigma(\ltn \bar{y}\rtn_{p-var,[a,b]}+\ltn y\rtn_{p-var,[a,b]}+1)}$, then for any $i\in\{0,1,\cdots,N_{\gamma,[a,b],p}-1\}$, we obtain
	\begin{equation}
		\begin{aligned}
			\ltn r\rtn_{p-var,[\tau_i,\tau_{i+1}]}&\leq \|r_{\tau_i}\|+4\|\bar{y}-y\|_{\infty,[a,b]}(\|\bar{y}-y\|_{\infty,[a,b]}+\ltn \bar{y}-y\rtn_{p-var,[a,b]})\\
			&\leq \|r_a\|+4(i+1)\|\bar{y}-y\|_{\infty,[a,b]}(\|\bar{y}-y\|_{\infty,[a,b]}+\ltn \bar{y}-y\rtn_{p-var,[a,b]}).
		\end{aligned}
	\end{equation}
	Hence, by $r_a=0$ and Lemma \ref{Sec2: equvalence Control}, we obtain
	\begin{equation}\label{Sec3:3.43}
		\begin{aligned}
			\|r\|_{\infty,[a,b]}&\leq \ltn r\rtn_{p-var,[a,b]}\\
			& \leq 4\|\bar{y}-y\|_{\infty,[a,b]}(\|\bar{y}-y\|_{\infty,[a,b]}+\ltn \bar{y}-y\rtn_{p-var,[a,b]})N_{\gamma_2,[a,b],p}^2.
		\end{aligned}
	\end{equation}
	By \eqref{Sec3:3.43} and Lemma \ref{Sec3:Lemma 3.2}, there exists a constant $D=D(p,[a,b],\ltn W\rtn_{p-var,[a,b]})$ such that
	\begin{equation}\label{Sec3:1:3.42}
		\|\bar{y}_{t}(W,\bar{y}_a)-y_{t}(W,y_a)-\Phi(t,W,y_a)(\bar{y}_a-y_a)\|\leq D\|\bar{y}_a-y_a\|^2
	\end{equation}
	hold for all $t\in[a,b]$. Then the solution of \eqref{Sec3:3.19} is Fr\'{e}chet derivative of  $y_t(W,y_a)$, we complete the proof.
\end{proof}
\begin{rem}
	For the well-posedness of equations  \eqref{Sec1:pure Young}, $\sigma\in C_b^2$ is a classical assumption. However, $C_b^3$-regularity is necessary to ensure the differentiability of solution flow with respect to  initial data.  
\end{rem}

\begin{rem}
	For simplicity, we use $\varphi(t,W,y_a)$ and $\frac{\partial \varphi(t,W,\cdot)}{\partial y}$ to represent the solution $y_t(W,y_a)$ and its Fr\'{e}chet derivative with initial data, respectively. 
\end{rem}
Similarly, the analogous results hold  for  the  backward direction.
\begin{cor}\label{derivative-back}
	For almost every given $W\in\Omega$, 
	the  solution $h_t(W,h_b)$ of backward Young differential equation \eqref{Sec1:bpure Young} is differentiable with respect to terminal data $h_b$.  The derivative $\frac{\partial h_t(W,h_b)}{\partial h_b}$ satisfies the following  Young differential equation 
		$$d\rho_t=D\sigma(h_t)\rho_t dW_t,\quad t\in[a,b].$$
	Moreover, the estimates in Lemma \ref{Sec3:Lemma 3.2} also hold for $h_t$.
\end{cor}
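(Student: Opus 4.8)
The plan is to avoid a fresh fixed-point and differentiability argument by reducing the backward equation \eqref{Sec1:bpure Young} to the forward equation \eqref{Sec1:pure Young} through a deterministic time reversal, and then invoking Theorem \ref{Sec3:thm3.2} and Lemma \ref{Sec3:Lemma 3.2} essentially verbatim. Fix a trajectory $W$ admitting an $\alpha$-H\"older version and introduce the reversed driver $\tilde{W}_s := W_{a+b-s}$ together with $\tilde{h}_s := h_{a+b-s}$ for $s\in[a,b]$. Since a Young integral is a limit of Riemann--Stieltjes sums over finite partitions (Definition \ref{Sec2:Young integral}), reversing the orientation of each partition yields the change-of-variables identity
\begin{equation*}
\int_s^t \sigma(h_r)\,dW_r = -\int_{a+b-t}^{a+b-s}\sigma(\tilde{h}_u)\,d\tilde{W}_u, \qquad a\le s\le t\le b .
\end{equation*}
Substituting this into the defining relation $h_t-h_s=\int_s^t\sigma(h_r)\,dW_r$ of \eqref{Sec1:bpure Young} and putting $s'=a+b-t$, $t'=a+b-s$, the minus sign above is cancelled by the minus coming from $h_t-h_s=-(\tilde{h}_{t'}-\tilde{h}_{s'})$, so that
\begin{equation*}
\tilde{h}_{t'}-\tilde{h}_{s'}=\int_{s'}^{t'}\sigma(\tilde{h}_u)\,d\tilde{W}_u, \qquad a\le s'\le t'\le b .
\end{equation*}
Hence $\tilde{h}$ solves the forward pure Young equation \eqref{Sec1:pure Young} driven by $\tilde{W}$ with initial datum $\tilde{h}_a=h_b$.

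\textbf{Transfer of the results.} Because time reversal merely relabels partitions, $\tilde{W}\in C^{\alpha}([a,b];\mathbb{R}^d)$ with $\ltn \tilde{W}\rtn_{p-var,[a,b]}=\ltn W\rtn_{p-var,[a,b]}$, so every hypothesis used in Theorem \ref{Sec3:thm3.2} and Lemma \ref{Sec3:Lemma 3.2} holds for $\tilde{W}$. Theorem \ref{Sec3:thm3.2} applied to $\tilde{h}$ then gives that $\tilde{h}_u(\tilde{W},\tilde{h}_a)$ is Fr\'echet differentiable in $\tilde{h}_a=h_b$, with derivative $\tilde{\rho}_u:=\partial \tilde{h}_u/\partial \tilde{h}_a$ solving the forward equation \eqref{Sec3:3.19} in the form $d\tilde{\rho}_u=D\sigma(\tilde{h}_u)\tilde{\rho}_u\,d\tilde{W}_u$ on $[a,b]$ with $\tilde{\rho}_a=Id$. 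Reversing time once more, $\rho_t:=\partial h_t/\partial h_b=\tilde{\rho}_{a+b-t}$, and the same change-of-variables identity turns this into the claimed backward equation $d\rho_t=D\sigma(h_t)\rho_t\,dW_t$ on $[a,b]$ (with terminal value $\rho_b=Id$). Likewise, the continuous-dependence estimates of Lemma \ref{Sec3:Lemma 3.2} for $\tilde{h}$ transfer to $h$: the $p$-variation seminorm and the sup-norm are invariant under $s\mapsto a+b-s$, and the greedy-time counts obey the direction-free bound \eqref{Sec2:pstopping times numbers}, so the asserted estimates hold with constants of the same form.

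\textbf{Main obstacle.} The only delicate point is the orientation bookkeeping of the Young integral under reversal: one must confirm that the minus sign in the change-of-variables identity is cancelled by the minus arising from rewriting the increment $h_t-h_s$ as $-(\tilde{h}_{a+b-s}-\tilde{h}_{a+b-t})$, so that the reversed equation is genuinely \emph{forward} rather than carrying a spurious sign, and that the Young--Lo\`eve estimate of Remark \ref{Sec2:Young est-H} is stable under this relabeling. Once the identity above is justified, the corollary is an immediate transcription of Theorem \ref{Sec3:thm3.2} and Lemma \ref{Sec3:Lemma 3.2}, which is why the detailed computation may be omitted.
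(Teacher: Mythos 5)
Your proposal is correct, but it is not the paper's route. The paper gives no separate argument for the backward equation: it simply asserts that the fixed-point, concatenation, and differentiability arguments of Theorem \ref{Sec3:thm3.2} and Lemma \ref{Sec3:Lemma 3.2} go through verbatim when one solves from the terminal datum instead of the initial one. You instead reduce the backward problem to the forward one by the involution $s\mapsto a+b-s$, which lets you quote Theorem \ref{Sec3:thm3.2} and Lemma \ref{Sec3:Lemma 3.2} as black boxes. The sign bookkeeping you flag does work out exactly as you say: the increment $h_t-h_s=-(\tilde{h}_{t'}-\tilde{h}_{s'})$ cancels the minus from reorienting the integral, $\tilde h$ solves the forward equation driven by $\tilde W$, and reversing once more turns the forward variational equation for $\tilde\rho$ into the stated backward equation for $\rho_t=\tilde\rho_{a+b-t}$ with $\rho_b=Id$; moreover both the H\"older and $p$--variation seminorms, hence the bound \eqref{Sec2:pstopping times numbers} on the greedy-time count, are invariant under the relabeling, so the estimates of Lemma \ref{Sec3:Lemma 3.2} transfer. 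What your reduction buys is economy: one theorem proved once, applied twice. What the paper's repetition buys is robustness: your change-of-variables identity silently uses that reversing a partition turns left-endpoint Riemann sums into right-endpoint ones, and that these converge to the same limit; this is true precisely because $2\alpha>1$, via
\begin{equation*}
\Bigl\|\sum_{[p,q]}\bigl(\sigma(\tilde h_q)-\sigma(\tilde h_p)\bigr)\tilde W_{p,q}\Bigr\|
\leq C_\sigma \ltn \tilde h\rtn_{\alpha,[a,b]}\ltn \tilde W\rtn_{\alpha,[a,b]}\sum_{[p,q]}|q-p|^{2\alpha}\longrightarrow 0 ,
\end{equation*}
a one-line estimate you should include to close the argument. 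Note also that this trick is genuinely Young-specific: in the rough regime $H\leq\frac12$ time reversal additionally flips and transposes the L\'evy area, so the paper's ``repeat the forward proof'' template is the one that extends to the companion rough statements, whereas your reduction would need extra work there. Since the present corollary concerns only the Young case, your proof is a valid and arguably cleaner alternative.
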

Similar to the step 2 of  the proof of Theorem \ref{Sec3:thm3.2}, we obtain the following result:
\begin{cor}\label{Sec3:cor3.2}
	For almost every given $W\in\Omega$.  $\frac{\partial \varphi(t,W,\cdot)}{\partial y}$ is globally Lipschitz continuous with respect to initial data $y_a$.
\end{cor}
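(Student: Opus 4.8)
The plan is to mimic the argument that established Fréchet differentiability in Theorem \ref{Sec3:thm3.2}, but now comparing the two variational (Jacobian) flows directly. Fix almost every $W\in\Omega$ and two initial data $y_a,\bar y_a$, and write $y_t=\varphi(t,W,y_a)$, $\bar y_t=\varphi(t,W,\bar y_a)$ for the corresponding solutions of \eqref{Sec1:pure Young}. Their Fréchet derivatives $\xi_t:=\frac{\partial\varphi}{\partial y}(t,W,y_a)$ and $\bar\xi_t:=\frac{\partial\varphi}{\partial y}(t,W,\bar y_a)$ solve, by Theorem \ref{Sec3:thm3.2}, the linear Young equations $d\xi_t=D\sigma(y_t)\xi_t\,dW_t$ and $d\bar\xi_t=D\sigma(\bar y_t)\bar\xi_t\,dW_t$, both with initial value $\mathrm{Id}$. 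Setting $q_t:=\bar\xi_t-\xi_t$ (so $q_a=0$) and splitting
$$D\sigma(\bar y_r)\bar\xi_r-D\sigma(y_r)\xi_r=D\sigma(\bar y_r)q_r+\big(D\sigma(\bar y_r)-D\sigma(y_r)\big)\xi_r,$$
I obtain the affine integral equation $q_{s,t}=g_{s,t}+\int_s^t D\sigma(\bar y_r)q_r\,dW_r$ with inhomogeneous part $g_{s,t}:=\int_s^t\big(D\sigma(\bar y_r)-D\sigma(y_r)\big)\xi_r\,dW_r$.

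Next I would estimate both pieces with the Young bound \eqref{Sec2:p-Youngbounded}, exactly as in the proof of Theorem \ref{Sec3:thm3.2}. The homogeneous integral contributes, after the usual product-rule bound $\ltn D\sigma(\bar y)q\rtn_{p-var,[s,t]}\le C_\sigma\ltn\bar y\rtn_{p-var,[s,t]}\|q\|_{\infty,[s,t]}+C_\sigma\ltn q\rtn_{p-var,[s,t]}$, a term proportional to $\ltn W\rtn_{p-var,[s,t]}$ times $(\|q\|_{\infty}+\ltn q\rtn_{p-var})$ that will be absorbed on short intervals. The crucial point is that $g$ is \emph{first order} in $\|\bar y_a-y_a\|$: since $\|D\sigma(\bar y_r)-D\sigma(y_r)\|\le C_\sigma\|\bar y-y\|_{\infty,[a,b]}$ and, by applying the mean value theorem twice to $D\sigma$ (licensed by $\sigma\in C_b^3$), the second difference satisfies $\ltn D\sigma(\bar y)-D\sigma(y)\rtn_{p-var,[s,t]}\le C_\sigma\|\bar y-y\|_{\infty,[a,b]}(\ltn\bar y\rtn_{p-var}+\ltn y\rtn_{p-var})+C_\sigma\ltn\bar y-y\rtn_{p-var}$, the Young bound yields $\ltn g\rtn_{p-var,[s,t]}\le C\big(\|\bar y-y\|_{\infty,[a,b]}+\ltn\bar y-y\rtn_{p-var,[a,b]}\big)\ltn W\rtn_{p-var,[s,t]}$. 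Here the $p$-variation norms of $y,\bar y$ are bounded uniformly in the initial data by $N_{\gamma_1,[a,b],p}(W)$ through Lemma \ref{Sec2:lemma-bounded}; $\|\xi\|_{\infty}$ and $\ltn\xi\rtn_{p-var}$ are bounded uniformly in $y_a$ by the analogue of Lemma \ref{Sec2:lemma-bounded} for the linear equation \eqref{Sec3:3.19}; and Lemma \ref{Sec3:Lemma 3.2} finally converts $\|\bar y-y\|_{\infty}+\ltn\bar y-y\rtn_{p-var}$ into $C\|\bar y_a-y_a\|$.

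Finally I would run the greedy stopping-time argument \eqref{Sec2:pstopping times}. Choosing $\gamma$ small enough that the homogeneous coefficient $\propto\ltn W\rtn_{p-var,[\tau_i,\tau_{i+1}]}$ is absorbed, on each interval I get $\ltn q\rtn_{p-var,[\tau_i,\tau_{i+1}]}\le \|q_{\tau_i}\|+C\|\bar y_a-y_a\|$, whence $\|q_{\tau_{i+1}}\|\le 2\|q_{\tau_i}\|+C\|\bar y_a-y_a\|$; iterating from $q_a=0$ and concatenating with Lemma \ref{Sec2: equvalence Control} bounds $\ltn q\rtn_{p-var,[a,b]}$ — and hence $\|q\|_{\infty,[a,b]}$, since $q_a=0$ — by $C\,2^{N}N\|\bar y_a-y_a\|$ with $N=N_{\gamma,[a,b],p}(W)$. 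This is precisely global Lipschitz continuity of $\frac{\partial\varphi(t,W,\cdot)}{\partial y}$ in $y_a$. I expect the main obstacle to be the estimate of the inhomogeneous term $g$: keeping it genuinely linear (rather than quadratic) in $\|\bar y_a-y_a\|$ while controlling its $p$-variation requires the second-order difference bound on $D\sigma$ together with uniform-in-$y_a$ $p$-variation bounds on the Jacobian $\xi$, so that the resulting Lipschitz constant depends only on $W$, $p$ and $[a,b]$ and not on the chosen initial data.
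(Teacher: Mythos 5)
Your proposal is correct and takes essentially the same approach as the paper, which proves this corollary only by remarking that it follows ``similar to step 2 of the proof of Theorem \ref{Sec3:thm3.2}'': your argument is precisely that adaptation, with the difference $q_t=\bar\xi_t-\xi_t$ of Jacobian flows satisfying an affine Young equation whose inhomogeneous term is first order in $\|\bar y_a-y_a\|$ (via the $C_b^3$ second-difference bound on $D\sigma$ and Lemma \ref{Sec3:Lemma 3.2}), closed by the same greedy stopping-time and doubling iteration used there for the remainder $r$. The uniform-in-$y_a$ bounds you invoke for $y$ and the Jacobian $\xi$ are indeed available from Lemma \ref{Sec2:lemma-bounded} and the invariance estimate in the proof of Theorem \ref{Sec3:thm3.2}, so there is no gap.
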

\begin{proposition}
For almost every given $W\in\Omega$. 	Let $\bar{y}_t$ and $y_t$ be the solutions of \eqref{Sec1:pure Young} with initial data $\bar{y}_a$ and $y_a$, respectively. If arbitrary interval $[a,b]$   satisfying $32C_{\sigma}C\ltn W\rtn_{p-var,[a,b]}\leq 1$, then we have the following estimates
	\begin{equation}\label{Sec3:3.45}
		\begin{aligned}
			\ltn y\rtn_{p-var,[a,b]}&\leq4C_{\sigma}\ltn W\rtn_{p-var,[a,b]}\leq  \frac{1}{2},\\
			\ltn \bar{y}-y\rtn_{p-var,[a,b]}&\leq 16C_{\sigma} C\ltn W\rtn_{p-var,[a,b]}\|\bar{y}_a-y_a\|.
		\end{aligned}
	\end{equation}
\end{proposition}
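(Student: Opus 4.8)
The plan is to specialise the increment estimates already obtained in the proofs of Lemma~\ref{Sec2:lemma-bounded} and Lemma~\ref{Sec3:Lemma 3.2} to the single ``small'' interval $[a,b]$, on which the hypothesis $32C_\sigma C\ltn W\rtn_{p-var,[a,b]}\leq1$ renders the greedy stopping-time iteration unnecessary: every self-referential $p$-variation term that appears on the right-hand side will carry a coefficient bounded by a small multiple of $C_\sigma C\ltn W\rtn_{p-var,[a,b]}$, which the hypothesis forces strictly below $\frac12$, so it can be absorbed into the left-hand side. Throughout I write $\omega:=\ltn W\rtn_{p-var,[a,b]}$; since $C>1$, the hypothesis yields $C_\sigma\omega\leq\frac1{32}$ and $CC_\sigma\omega\leq\frac1{32}$.

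For the first estimate I would begin from $y_{u,v}=\int_u^v\sigma(y_r)\,dW_r$ and, exactly as in the proof of Lemma~\ref{Sec2:lemma-bounded}, apply the Young bound \eqref{Sec2:p-Youngbounded} together with assumption \textbf{A2} to obtain, for every $[u,v]\subset[a,b]$,
\begin{equation}\nonumber
\|y_{u,v}\|\leq C_\sigma\ltn W\rtn_{p-var,[u,v]}+CC_\sigma\ltn y\rtn_{p-var,[u,v]}\ltn W\rtn_{p-var,[u,v]}\leq\big(C_\sigma+CC_\sigma\ltn y\rtn_{p-var,[a,b]}\big)\ltn W\rtn_{p-var,[u,v]}.
\end{equation}
Raising to the $p$-th power, summing over a partition, and using that $\ltn W\rtn_{p-var,\cdot}^p$ is a control (the first inequality of Lemma~\ref{Sec2: equvalence Control}) gives $\ltn y\rtn_{p-var,[a,b]}\leq(C_\sigma+CC_\sigma\ltn y\rtn_{p-var,[a,b]})\omega$. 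Because $CC_\sigma\omega\leq\frac1{32}<1$, the self-referential term is absorbed, leaving $\ltn y\rtn_{p-var,[a,b]}\leq\frac{C_\sigma\omega}{1-CC_\sigma\omega}\leq\frac{32}{31}C_\sigma\omega\leq4C_\sigma\omega$; and $4C_\sigma\omega\leq4CC_\sigma\omega\leq\frac18\leq\frac12$, which is the first line of \eqref{Sec3:3.45}. The identical bound holds for $\bar y$.

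For the second estimate I would subtract the two equations to get $(\bar y-y)_{u,v}=\int_u^v\big(\sigma(\bar y_r)-\sigma(y_r)\big)\,dW_r$, apply the Young bound \eqref{Sec2:p-Youngbounded}, estimate $\|\sigma(\bar y_u)-\sigma(y_u)\|\leq C_\sigma\|\bar y-y\|_{\infty,[a,b]}$, and control $\ltn\sigma(\bar y)-\sigma(y)\rtn_{p-var,[u,v]}$ by the two-fold mean-value argument of Lemma~\ref{Sec3:Lemma 3.2}, i.e. by $2C_\sigma\ltn\bar y-y\rtn_{p-var,[a,b]}+4C_\sigma(\ltn\bar y\rtn_{p-var,[a,b]}+\ltn y\rtn_{p-var,[a,b]})\|\bar y-y\|_{\infty,[a,b]}$ (monotonicity of the semi-norms in the interval lets me replace $[u,v]$ by $[a,b]$ in every factor). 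Inserting the first-step bounds $\ltn\bar y\rtn_{p-var,[a,b]},\ltn y\rtn_{p-var,[a,b]}\leq4C_\sigma\omega$ and the elementary inequality $\|\bar y-y\|_{\infty,[a,b]}\leq\|\bar y_a-y_a\|+\ltn\bar y-y\rtn_{p-var,[a,b]}$, the right-hand side becomes a constant multiple of $\ltn W\rtn_{p-var,[u,v]}$; summing $p$-th powers against the control $\omega^p$ as before yields $\delta\leq A\|\bar y_a-y_a\|+(A+B)\delta$, where $\delta:=\ltn\bar y-y\rtn_{p-var,[a,b]}$, $A=C_\sigma\omega+32CC_\sigma^2\omega^2$, and $B=2CC_\sigma\omega$. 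The hypothesis gives $A\leq\frac1{16}$ and $B\leq\frac1{16}$, so $A+B\leq\frac18$, the term $(A+B)\delta$ is absorbed, and $\delta\leq\frac{A}{1-A-B}\|\bar y_a-y_a\|\leq\frac{16}{7}C_\sigma\omega\|\bar y_a-y_a\|\leq16C_\sigma C\omega\|\bar y_a-y_a\|$, using $A\leq2C_\sigma\omega$ and $1-A-B\geq\frac78$. This is the second line of \eqref{Sec3:3.45}.

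Conceptually the argument is routine and the stated constants $4$ and $16$ are generous, so there is ample slack. The only two points that demand care are (i) keeping each self-referential coefficient strictly below $1$ so that the absorption step is legitimate, which is precisely what $32C_\sigma C\omega\leq1$ secures, and (ii) closing the sup-norm difference through $\|\bar y-y\|_{\infty,[a,b]}\leq\|\bar y_a-y_a\|+\ltn\bar y-y\rtn_{p-var,[a,b]}$, so that the final bound is expressed in terms of the initial datum $\|\bar y_a-y_a\|$ alone. Neither presents a genuine obstacle beyond this bookkeeping.
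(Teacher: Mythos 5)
Your proposal is correct and follows essentially the same route as the paper: a direct absorption argument on the single small interval, combining the Young--Loeve bound \eqref{Sec2:p-Youngbounded}, the two-fold mean-value estimate for $\ltn\sigma(\bar y)-\sigma(y)\rtn_{p\text{-}var}$, the elementary bound $\|\bar y-y\|_{\infty,[a,b]}\leq\|\bar y_a-y_a\|+\ltn\bar y-y\rtn_{p\text{-}var,[a,b]}$, and the smallness hypothesis $32C_\sigma C\ltn W\rtn_{p\text{-}var,[a,b]}\leq 1$ to absorb the self-referential terms. The only difference is bookkeeping: you freeze the subinterval semi-norms at their $[a,b]$ values and then invoke superadditivity of the control $\ltn W\rtn_{p\text{-}var}^p$ (Lemma \ref{Sec2: equvalence Control}), whereas the paper passes to the $p$-variation norm with factors of $2$; both yield the stated constants $4$ and $16$ with room to spare.
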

\begin{proof}
	For any $s<
	t\in[a,b]$,	Young bound \eqref{Sec2:p-Youngbounded} yields 
	\begin{equation}
		\begin{aligned}
			\|y_{s,t}\|&\leq \|\sigma(y_s)W_{s,t}\|+C\ltn \sigma(y)\rtn_{p-var,[s,t]}\ltn W\rtn_{p-var,[s,t]}\\
			&\leq C_{\sigma}\ltn W\rtn_{p-var,[s,t]}+CC_{\sigma}\ltn y\rtn_{p-var,[s,t]}\ltn W\rtn_{p-var,[s,t]}.
		\end{aligned}
	\end{equation}
	By the definition of $p$--variation semi-norm, we obtain
	\begin{equation}
		\begin{aligned}
			\ltn y\rtn_{p-var,[a,b]}&\leq 2C_{\sigma}\ltn W\rtn_{p-var,[a,b]}+2CC_{\sigma}\ltn y\rtn_{p-var,[a,b]}\ltn W\rtn_{p-var,[a,b]}\\
			&\leq 2C_{\sigma}\ltn W\rtn_{p-var,[a,b]}+\frac{1}{2}\ltn y\rtn_{p-var,[a,b]}.
		\end{aligned}
	\end{equation}
	The above inequality implies that $\ltn y\rtn_{p-var,[a,b]}\leq
	4C_{\sigma}\ltn W\rtn_{p-var,[a,b]}\leq \frac{1}{2}.$  Similarly, $\ltn \bar y\rtn_{p-var,[a,b]}\leq \frac{1}{2}.$ 
	
	Observe that
	$$\bar{y}_{s,t}-y_{s,t}=\int_{s}^{t}\sigma(\bar{y}_r)-\sigma(y_r)dW_r.$$
	For any $s<t\in[a,b]$,  using Young bound \eqref{Sec2:p-Youngbounded} and mean value theorem twice  and assumption \textbf{A2}, we have  
	\begin{equation}\nonumber
		\begin{aligned}
			\|\bar{y}_{s,t}-y_{s,t}\|&\leq \left\|\int_{s}^{t}\sigma(\bar{y}_r)-\sigma(y_r)dW_r\right\|\\
			&\leq C_\sigma\ltn W\rtn_{p-var,[s,t]}\|\bar{y}\!-\!y\|_{\infty,[a,b]}\!+\!2C_\sigma C\ltn W\rtn_{p-var,[s,t]}\ltn \bar{y}\!-\!y\rtn_{p-var,[s,t]}\\
			&~~+2C_\sigma C\left(\ltn\bar{y}\rtn_{p-var,[a,b]}+\ltn y\rtn_{p-var,[a,b]}\right)\|\bar{y}-y\|_{\infty,[a,b]}\ltn W \rtn_{p-var,[s,t]}.
		\end{aligned}
	\end{equation}
	The above inequality shows that
	\begin{equation}\nonumber
		\begin{aligned}
			\ltn \bar{y}-y\rtn_{p-var,[a,b]}&\leq 4C_\sigma C\left(\ltn\bar{y}\rtn_{p-var,[a,b]}\!+\!\ltn y\rtn_{p-var,[a,b]}\!+\!1\right)\ltn W\rtn_{p\!-\!var,[a,b]}\|\bar{y}\!-\!y\|_{\infty,[a,b]}\\
			&~~+4C_\sigma C\ltn W\rtn_{p-var,[a,b]}\|\bar{y}\!-\!y\|_{p-var,[a,b]}\\
			&\leq 16C_\sigma C\ltn W\rtn_{p-var,[a,b]}\ltn \bar{y}-y\rtn_{p-var,[a,b]}\\
			&~~+8C_\sigma C\ltn W\rtn_{p-var,[a,b]}\|\bar{y}_a-y_a\|.
		\end{aligned}
	\end{equation}
	Therefore,
	\begin{equation}
		\begin{aligned}
			\ltn \bar{y}-y\rtn_{p-var,[a,b]}\leq  16C_{\sigma}C\ltn W\rtn_{p-var,[a,b]}\|\bar{y}_a-y_a\|.
		\end{aligned}
	\end{equation}
\end{proof}
\begin{rem}\label{Sec3:Rem 3.2}
For almost every $W\in\Omega$,	it can be shown  that the inequalities of \eqref{Sec3:3.45}  also hold for the backward equation \eqref{Sec1:bpure Young}. In addition,  we use $\psi(t,W,h_b)$ and $\frac{\partial \psi(t,W,\cdot)}{\partial h}$ to represent the solution $h_t(W,h_b)$ and its Fr\'{e}chet derivative with terminal data, respectively. 
\end{rem}
For almost every given $W\in\Omega$. We note that
$$\varphi(t,W,\cdot)\circ\psi(a,W,y_t)=y_t,$$
where $y_t=\varphi(t,W,y_a)$. Theorem \ref{Sec3:thm3.2} and Corollary \ref{derivative-back} imply that $\varphi$ and $\psi$ are differentiable with respect to initial data and terminal data, respectively. Furthermore,
\begin{equation}\label{Sec3:1:3.48}
	\frac{\partial \varphi(t,W,y_a)}{\partial y}\frac{\partial \psi(a,W,y_t)}{\partial h}=Id.
\end{equation}
holds for almost every given $W\in\Omega$, where $Id$ is a identity operator.

Based on the analysis, we  can give the proof of Theorem \ref{Sec:thm 3.1}.
\begin{proof}
	In order to get the global well-posedness of equations \eqref{Sec1:equi-eq}, we employ the Doss-Sussmann transformation to transform the stochastic system into a random differential equation. To this end, we need  the well-posedness of \eqref{Sec1:zeq}. For almost every given $W\in\Omega$,  let interval $[a,b]$ satisfy $32C_{\sigma}C\ltn W\rtn_{p-var,[a,b]}\leq \lambda$ for some $\lambda\in(0,1)$.  Similar to Corollary \ref{Sec3:cor3.2}, $\frac{\partial\psi}{\partial h}(t,W,\cdot)$ is  globally Lipschitz, and by Theorem \ref{Sec3:thm3.2}, $\varphi(t,W,\cdot)$  is  also globally Lipschitz. In addition, since $f$ and $g$ are locally Lipschitz, the right side of \eqref{Sec1:zeq} is locally Lipschitz. To handle the local Lipschitz property, we introduce the truncated functions  $f_R$ and $g_R$, which is coincide with $f$ and $g$ on a ball $B_R(0)$ with center $0$ and radius $R$ and vanish outside $B_{R+1}(0)$.   Using the truncated functions  $f_R,g_R$ to replace $f$ and $g$ in equation \eqref{Sec1:zeq}.  Applying the Banach fixed-point theorem, the truncated version of \eqref{Sec1:zeq} admits a unique solution in  $C([a,b];\mathbb{R}^m\times\mathbb{R}^m)$.

	From the first inequality of \eqref{Sec3:3.45}, 
	\begin{equation}\label{SecA:A40}
	\|\varphi(t,W,Z_t^i)-Z_t^i\|\leq\ltn \varphi(\cdot,W,Z_t^i)\rtn_{p-var,[a,b]}\leq 4C_\sigma\ltn W\rtn_{p-var,[a,b]}\leq \frac{\lambda}{2}
	\end{equation}
	holds for almost all $W\in\Omega$ and $i\in\{1,2\}$.
	
	For  $i=1,2$ and almost every $W\in\Omega$, by  Remark \ref{Sec3:Rem 3.2}, $\tilde{\psi}$, defined in Sec. \ref{Sec:synchronization}, satisfy that 
	\begin{equation}\label{SecA:A41}
		\begin{aligned}
			\|\tilde{\psi}(a,W,Z_t^i)\|\leq \ltn \tilde{\psi}(\cdot,W,Z_t^i)\rtn_{p-var,[a,b]}&=\ltn \frac{\partial\psi}{\partial h}(\cdot,W,\varphi(t,W,Z_t^i)) \rtn_{p-var,[a,b]}\\
			&\leq 16C_{\sigma}C\ltn W\rtn_{p-var,[a,b]}\leq \frac{\lambda}{2}.
		\end{aligned}
	\end{equation}
	Rewriting  \eqref{Sec1:zeq} as
	\begin{equation}\label{Sec3:zeq1}
		\left\{
		\begin{aligned}
			\dot{Z}^1_t&= (Id+\tilde{\psi}(a,W,Z^1_t)) f(\varphi(t,W,Z^1_t))+\kappa(Z^2_t-Z^1_t),\\
			\dot{Z}^2_t&= (Id+\tilde{\psi}(a,W,Z^2_t)) g(\varphi(t,W,Z^2_t))+\kappa(Z^1_t-Z^2_t).
		\end{aligned}
		\right.
	\end{equation}
	We claim that there exist some constant $\bar{C}_{\lambda}>0,\delta_{\lambda}>0$ such that
	\begin{equation}\label{Sec3:3.51}
		\begin{aligned}
			\frac{d}{2dt}\|Z_t\|^2\leq \bar{C}_{\lambda}-\delta_{\lambda}\|Z_t\|^2
		\end{aligned}
	\end{equation}
	holds for almost every given $W\in\Omega$. Indeed, using Young's inequality, 
	\begin{equation}
		\begin{aligned}
			\frac{d\|Z_t\|^2}{2dt}&=\frac{d(\|Z^1_t\|^2+\|Z^2_t\|^2)}{2dt}\\
			&\leq \<(Id+\tilde{\psi}(t,W,Z^1_t)) f(\varphi(t,W,Z^1_t)),Z_t^1\>\\
			&~~~~+\<(Id+\tilde{\psi}(t,W,Z^2_t)) g(\varphi(t,W,Z^2_t)),Z^2_t\>
		\end{aligned}
	\end{equation}
holds for almost every given $W\in\Omega$.	For the above inequality, using the same arguments in Ref. \cite[Theorem 2.1]{MR4385780}, we can get \eqref{Sec3:3.51}.  Therefore, the inequality \eqref{Sec3:3.51} shows that
	$$\|Z_t\|\leq \|Y_a\|+\sqrt{\frac{\bar{C}_{\lambda}}{\delta_{\lambda}}}$$
	holds for all  $t\in [a,b]$ and almost every given $W\in\Omega$. This estimate also holds for the truncated system. Thus,    Lemma \ref{Sec2:lemma-bounded} and the above inequality ensure  that the finite  $R$ can be chosen for almost every given $W\in\Omega$,  we obtain the global solution $Z_t$ on such interval $[a,b]$.
	Finally, for general $[a,b]$ and almost every given $W\in\Omega$, if  $32C_{\sigma}C\ltn W\rtn_{p-var,[a,b]}> \lambda$, we partition $[a,b]$ by using the greedy stopping time sequence $\{\tau_{i}(\frac{\lambda}{32C_{\sigma}C},[a,b],p)\}_{i\in\mathbb{N}}$. By repeating the above procedure $N{\frac{\lambda}{32C_{\sigma}C},[a,b],p}(W)$ times, we obtain the unique global solution  $Z_t\in C([a,b];\mathbb{R}^m\times\mathbb{R}^m)$ of almost every given $W\in\Omega$.
	
	For almost every given $W\in\Omega$. Define $Y_t:=(\varphi(t,W,Z^1_t),\varphi(t,W,Z^2_t))$, we will illustrate  that $Y_t$  is a solution of  \eqref{Sec1:equi-eq}. Indeed, for any $s<t\in[a,b]$,
	\begin{equation}\label{Sec3:3.55}
		\begin{aligned}
			Y_{s,t}^1&=\varphi(t,W,Z^1_t)-\varphi(s,W,Z^1_s)\\
			&=\varphi(t,W,Z^1_t)-\varphi(t,W,Z^1_s)+\varphi(t,W,Z^1_s)-\varphi(s,W,Z^1_s)\\
			&=\varphi(t,W,Z^1_t)-\varphi(t,W,Z^1_s)+\int_{s}^{t}\sigma(\varphi(u,W,Z_s^1))dW_u.	
		\end{aligned}
	\end{equation}
	Using \eqref{Sec3:1:3.42} and Theorem \ref{Sec3:thm3.2}, we have
	\begin{equation}\label{Sec3:3.56}
		\begin{aligned}
			&\|\varphi(t,W,Z^1_t)-\varphi(t,W,Z^1_s)-\frac{\partial \varphi(s,W,Z_s^1)}{\partial y}Z_{s,t}^1\|\\
			&\leq \left\|\varphi(t,W,Z^1_t)-\varphi(t,W,Z^1_s)-\frac{\partial \varphi(t,W,Z_s^1)}{\partial y}Z_{s,t}^1\right\|\\
			&~~~~+\left\|\frac{\partial \varphi(t,W,Z_s^1)}{\partial y}Z_{s,t}^1-\frac{\partial \varphi(s,W,Z_s^1)}{\partial y}Z_{s,t}^1 \right\|\\
			&\leq D\|Z_{s,t}^1\|^2+\left\|\int_{s}^{t}D\sigma(\varphi(u,W,Z_s^1))\frac{\partial \varphi(u,W,Z_s^1)}{\partial y}dW_u\right\|\|Z_{s,t}^1\|\\
			&\lesssim (t-s)^{1+\alpha}.
		\end{aligned}
	\end{equation}
	In addition,  due to the globally Lipschitz property of $\frac{\partial \psi(a,W,\cdot)}{\partial h}$, Theorem \ref{Sec3:thm3.2}, Corollary \ref{Sec3:cor3.2} and  \eqref{Sec3:1:3.48}, we have 
	\begin{equation}\label{Sec3:3.57}
		\begin{aligned}
			\frac{\partial \varphi(s,W,Z_s^1)}{\partial y}Z_{s,t}^1&=\frac{\partial \varphi(s,W,Z_s^1)}{\partial y}\frac{\partial \psi(a,W,\varphi(s,W,Z_s^1))}{\partial h}f(\varphi(s,W,Z_s^1))(t-s)\\
			&~~~~+\kappa\frac{\partial \varphi(s,W,Z_s^1)}{\partial y}
			(Z_s^2-Z_s^1)(t-s)\\
			&~~~~+\mathbf{o}((t-s)^2)\\
			&=f(\varphi(s,W,Z_s^1))(t-s)\\
			&~~~~+\kappa\frac{\partial \varphi(s,W,Z_s^1)}{\partial y}
			(Z_s^2-Z_s^1)(t-s)\\
			&~~~~+\mathbf{o}((t-s)^2).\\
		\end{aligned}
	\end{equation}
	Therefore, according to  \eqref{Sec3:3.55}-\eqref{Sec3:3.57}, $Z\in C([a,b],\mathbb{R}^m\times\mathbb{R}^m)$, we have
	\begin{align*}
		Y^1_{s,t}&=\lim_{|\mathcal{P}([s,t])|\rightarrow 0}\sum_{[u,v]\in\mathcal{P}([s,t])}Y^1_{u,v}\\
		&=\lim_{|\mathcal{P}([s,t])|\rightarrow 0}\sum_{[u,v]\in\mathcal{P}([s,t])}[f(\varphi(u,W,Z_u^1))(v-u)\\
		&~~~~+\kappa\frac{\partial \varphi(s,W,Z_s^1)}{\partial y}
		(Z_u^2-Z_u^1)(v-u)]+\lim_{|\mathcal{P}([s,t])|\rightarrow 0}\sum_{[u,v]\in\mathcal{P}([s,t])}\int_{s}^{t}\sigma(\varphi(u,W,Z_s^1))dW_u\\
		&=\int_{s}^{t}f(y_r)+\kappa\frac{\partial \varphi(r,W,Z_r^1)}{\partial y}
		(Z^2_r-Z^1_r)dr+\int_{s}^{t}\sigma(Y^1_r)dW_r.
	\end{align*}
	Similarly, we have
	$$Y^2_{s,t}=\int_{s}^{t}g(Y^2_r)+\kappa\frac{\partial \varphi(r,W^2,Z_r^2)}{\partial y}
	(Z^1_r-Z^2_r)dr+\int_{s}^{t}\sigma(Y^2_r)dW^2_r.$$
	Hence, $Y_t=(\varphi(t,W,Z_t^1),\varphi(t,W,Z_t^2))$ is a solution of equations \eqref{Sec1:equi-eq} on $[a,b]$ for almost every given $W\in\Omega$. For almost every given $W\in\Omega$. According to Theorem \ref{Sec3:Thm3.1}, Lemma \ref{Sec3:Lemma 3.2} and $Z\in C([a,b];\mathbb{R}^m\times\mathbb{R}^m)$, then $Y\in C([a,b];\mathbb{R}^m\times\mathbb{R}^m)$.  It means that $f(Y^1)$ and $g(Y^2)$ are globally Lipschitz with respect to $Y^1$ and $Y^2$, respectively. Therefore, from the inequalities \eqref{Sec3:3.55}-\eqref{Sec3:3.57},  $Y^1\in C^\alpha([a,b];\mathbb{R}^m)$ and $Y^2\in C^\alpha([a,b];\mathbb{R}^m)$  hold almost every given $W\in\Omega$.  This fact ensures that $Y=(Y^1,Y^2)\in C^{\alpha}([a,b];\mathbb{R}^m\times\mathbb{R}^m)$ holds almost every given $W\in\Omega$.  Finally, the uniqueness can be obtained easily. Even though $f$ and $g$ are locally Lipschitz, it is globally Lipschitz for any $Y^1,Y^2\in C^{\alpha}([a,b];\mathbb{R}^m)$. Hence, similar to proof of \cite[Theorem 3.9]{MR4493559},  the uniqueness can be obtained.
\end{proof}
  
  \section*{\bfseries Acknowledgements}
The author would like to thank Professor Luu Hoang Duc for valuable discussions on the techniques of the Doss--Sussmann transformation.
 \bibliographystyle{abbrv}

\begin{thebibliography}{10}
	
	\bibitem{MR3869887}
	S.~S.~M. Al-Azzawi, J.~Liu, and X.~Liu.
	\newblock The synchronization of stochastic differential equations with linear
	noise.
	\newblock {\em Stoch. Dyn.}, 18(6):1850049, 31, 2018.
	
	\bibitem{MR4500127}
	H.~Bessaih, M.~J. Garrido-Atienza, V.~K\"{o}pp, and B.~Schmalfu\ss.
	\newblock Synchronization of stochastic lattice equations and upper
	semicontinuity of attractors.
	\newblock {\em Stoch. Anal. Appl.}, 40(6):1067--1103, 2022.
	
	\bibitem{MR4110684}
	H.~Bessaih, M.~J. Garrido-Atienza, V.~K\"{o}pp, B.~Schmalfu\ss, and M.~Yang.
	\newblock Synchronization of stochastic lattice equations.
	\newblock {\em NoDEA Nonlinear Differential Equations Appl.}, 27(4):Paper No.
	36, 25, 2020.
	
	\bibitem{MR2387368}
	F.~Biagini, Y.~Hu, B.~\O~ksendal, and T.~Zhang.
	\newblock {\em Stochastic calculus for fractional {B}rownian motion and
		applications}.
	\newblock Probability and its Applications (New York). Springer-Verlag London,
	Ltd., London, 2008.
	
	\bibitem{MR2154449}
	T.~Caraballo and P.~E. Kloeden.
	\newblock The persistence of synchronization under environmental noise.
	\newblock {\em Proc. R. Soc. Lond. Ser. A Math. Phys. Eng. Sci.},
	461(2059):2257--2267, 2005.
	
	\bibitem{MR2399931}
	T.~Caraballo, P.~E. Kloeden, and A.~Neuenkirch.
	\newblock Synchronization of systems with multiplicative noise.
	\newblock {\em Stoch. Dyn.}, 8(1):139--154, 2008.
	
	\bibitem{MR3112937}
	T.~Cass, C.~Litterer, and T.~Lyons.
	\newblock Integrability and tail estimates for {G}aussian rough differential
	equations.
	\newblock {\em Ann. Probab.}, 41(4):3026--3050, 2013.
	
	\bibitem{MR4181483}
	I.~Chueshov and B.~Schmalfu\ss.
	\newblock {\em Synchronization in infinite-dimensional deterministic and
		stochastic systems}, volume 204 of {\em Applied Mathematical Sciences}.
	\newblock Springer, Cham, [2020] \copyright 2020.
	
	\bibitem{MR3871629}
	N.~D. Cong, L.~H. Duc, and P.~T. Hong.
	\newblock Nonautonomous {Y}oung differential equations revisited.
	\newblock {\em J. Dynam. Differential Equations}, 30(4):1921--1943, 2018.
	
	\bibitem{MR4493559}
	L.~H. Duc.
	\newblock Controlled differential equations as rough integrals.
	\newblock {\em Pure Appl. Funct. Anal.}, 7(4):1245--1271, 2022.
	
	\bibitem{MR4385780}
	L.~H. Duc.
	\newblock Random attractors for dissipative systems with rough noises.
	\newblock {\em Discrete Contin. Dyn. Syst.}, 42(4):1873--1902, 2022.
	
	\bibitem{MR4174393}
	P.~K. Friz and M.~Hairer.
	\newblock {\em A course on rough paths}.
	\newblock Universitext. Springer, Cham, second edition, [2020] \copyright 2020.
	\newblock With an introduction to regularity structures.
	
	\bibitem{MR2604669}
	P.~K. Friz and N.~B. Victoir.
	\newblock {\em Multidimensional stochastic processes as rough paths}, volume
	120 of {\em Cambridge Studies in Advanced Mathematics}.
	\newblock Cambridge University Press, Cambridge, 2010.
	\newblock Theory and applications.
	
	\bibitem{MR3226746}
	H.~Gao, M.~J. Garrido-Atienza, and B.~Schmalfu\ss.
	\newblock Random attractors for stochastic evolution equations driven by
	fractional {B}rownian motion.
	\newblock {\em SIAM J. Math. Anal.}, 46(4):2281--2309, 2014.
	
	\bibitem{MR3047956}
	A.~Gu.
	\newblock Random attractors of stochastic lattice dynamical systems driven by
	fractional {B}rownian motions.
	\newblock {\em Internat. J. Bifur. Chaos Appl. Sci. Engrg.}, 23(3):1350041, 9,
	2013.
	
	\bibitem{MR3377401}
	A.~Gu, C.~Zeng, and Y.~Li.
	\newblock Synchronization of systems with fractional environmental noises on
	finite lattice.
	\newblock {\em Fract. Calc. Appl. Anal.}, 18(4):891--910, 2015.
	
	\bibitem{MR2123208}
	M.~Hairer.
	\newblock Ergodicity of stochastic differential equations driven by fractional
	{B}rownian motion.
	\newblock {\em Ann. Probab.}, 33(2):703--758, 2005.
	
	\bibitem{MR2814425}
	M.~Hairer and N.~S. Pillai.
	\newblock Ergodicity of hypoelliptic {SDE}s driven by fractional {B}rownian
	motion.
	\newblock {\em Ann. Inst. Henri Poincar\'e{} Probab. Stat.}, 47(2):601--628,
	2011.
	
	\bibitem{MR3112925}
	M.~Hairer and N.~S. Pillai.
	\newblock Regularity of laws and ergodicity of hypoelliptic {SDE}s driven by
	rough paths.
	\newblock {\em Ann. Probab.}, 41(4):2544--2598, 2013.
	
	\bibitem{MR4097587}
	R.~Hesse and A.~Neam\c{t}u.
	\newblock Global solutions and random dynamical systems for rough
	evolution equations.
	\newblock {\em Discrete Contin. Dyn. Syst. Ser. B.}, 25(7):2723--2748, 2020.
	
	\bibitem{hu2008stochastic}
	A.~Hu and Z.~Xu.
	\newblock Stochastic linear generalized synchronization of chaotic systems via
	robust control.
	\newblock {\em Physics Letters A}, 372(21):3814--3818, 2008.
	
	\bibitem{jafri2016generalized}
	H.~H. Jafri, R.~B. Singh, and R.~Ramaswamy.
	\newblock Generalized synchrony of coupled stochastic processes with
	multiplicative noise.
	\newblock {\em Physical Review E}, 94(5):052216, 2016.
	
	\bibitem{MR1472487}
	H.~Kunita.
	\newblock {\em Stochastic flows and stochastic differential equations},
	volume~24 of {\em Cambridge Studies in Advanced Mathematics}.
	\newblock Cambridge University Press, Cambridge, 1997.
	\newblock Reprint of the 1990 original.
	
	\bibitem{MR4760258}
	C.~Li, X.~Xu, and M.~Liu.
	\newblock The stochastic fixed-time synchronization of delays neural networks
	driven by {L}\'{e}vy noise.
	\newblock {\em Systems Control Lett.}, 190:Paper No. 105839, 9, 2024.
	
	\bibitem{MR2946314}
	J.~Li and J.~Huang.
	\newblock Dynamics of 2{D} stochastic non-{N}ewtonian fluid driven by
	fractional {B}rownian motion.
	\newblock {\em Discrete Contin. Dyn. Syst. Ser. B}, 17(7):2483--2508, 2012.
	
	\bibitem{MR4026947}
	Z.~Li and J.~Liu.
	\newblock Synchronization for stochastic differential equations with nonlinear
	multiplicative noise in the mean square sense.
	\newblock {\em Discrete Contin. Dyn. Syst. Ser. B}, 24(10):5709--5736, 2019.
	
	\bibitem{MR4283320}
	Z.~Li and J.~Liu.
	\newblock Synchronization and averaging principle of stationary solutions for
	stochastic differential equations.
	\newblock {\em Potential Anal.}, 55(2):339--368, 2021.
	
	\bibitem{MR4549839}
	J.~Liu and M.~Zhao.
	\newblock Convergence rate of synchronization of coupled stochastic lattice
	systems with additive fractional noise.
	\newblock {\em J. Dynam. Differential Equations}, 35(1):947--981, 2023.
	
	\bibitem{MR2678918}
	X.~Liu, J.~Duan, J.~Liu, and P.~E. Kloeden.
	\newblock Synchronization of dissipative dynamical systems driven by
	non-{G}aussian {L}\'{e}vy noises.
	\newblock {\em Int. J. Stoch. Anal.}, pages Art. ID 502803, 13, 2010.
	
	\bibitem{MR2378138}
	Y.~S. Mishura.
	\newblock {\em Stochastic calculus for fractional {B}rownian motion and related
		processes}, volume 1929 of {\em Lecture Notes in Mathematics}.
	\newblock Springer-Verlag, Berlin, 2008.
	
	\bibitem{MR2671384}
	Z.~Shen, S.~Zhou, and X.~Han.
	\newblock Synchronization of coupled stochastic systems with multiplicative
	noise.
	\newblock {\em Stoch. Dyn.}, 10(3):407--428, 2010.
	
	\bibitem{MR461664}
	H.~J. Sussmann.
	\newblock On the gap between deterministic and stochastic ordinary differential
	equations.
	\newblock {\em Ann. Probability}, 6(1):19--41, 1978.
	
	\bibitem{MR4778574}
	W.~Wei, H.~Gao, and Q.~Cao.
	\newblock Pathwise synchronization of global coupled system with linear
	multiplicative rough noise.
	\newblock {\em Chaos}, 34(7):Paper No. 073132, 18, 2024.
	
	\bibitem{wei2024synchronization}
	W.~Wei, H.~Gao, and Q.~Cao.
	\newblock Synchronization of differential equations driven by linear
	multiplicative fractional brownian motion.
	\newblock {\em AIP Advances}, 14(3), 2024.
	
	\bibitem{MR4608383}
	Q.~Yang, X.~Lin, and C.~Zeng.
	\newblock Random attractors for rough stochastic partial differential
	equations.
	\newblock {\em J. Differential Equations}, 371:50--82, 2023.
	
	\bibitem{MR1555421}
	L.~C. Young.
	\newblock An inequality of the {H}\"{o}lder type, connected with {S}tieltjes
	integration.
	\newblock {\em Acta Math.}, 67(1):251--282, 1936.
	
\end{thebibliography}

\end{document}